\newcommand{\annotation}[1]{\marginpar{\tiny #1}}
\newcommand{\comment}[1]{}
\newcommand\JB{\color{black}}
\newcommand{\jb}[1]{\annotation{\JB{#1}}}  
\newcommand\BK{\color{black}}
\theoremstyle{plain} 
\newtheorem{maintheorem}{Theorem}
\numberwithin{equation}{section}
\newtheorem{theorem}[equation]{Theorem}
\newtheorem{addendum}[equation]{Addendum}
\newtheorem{corollary}[equation]{Corollary}
\newtheorem{lemma}[equation]{Lemma}
\newtheorem{fact}[equation]{Fact}
\newtheorem{proposition}[equation]{Proposition}
\newtheorem{claim}[equation]{Claim}
\newtheorem*{claim*}{Claim}
\newtheorem{definition}[equation]{Definition}
\newtheorem{remark}[equation]{Remark}
\newtheorem{remarks}[equation]{Remarks}
\newcommand\cC{\mathcal C}
\newcommand\cM{\mathcal M}
\newcommand\erg{{\textnormal{erg}}}
\newcommand\eps{\varepsilon}
\newcommand\NLP{\Pi}
\newcommand\FNLP{\boldsymbol{\Pi}}
\newcommand\NN{{\mathbb N}}
\newcommand\RR{{\mathbb R}}
\newcommand\ZZ{{\mathbb Z}}
\newcommand\Prob{{\operatorname{\mathscr{P}}}}
\newcommand\Proberg{\Prob_{\kern-3pt\mathrm{erg}}}
\newcommand\temp{\beta}
\renewcommand\top{{\mathrm{top}}}
\def\eg{{\it e.g.\ }}
\newcommand\vpot{\vec\pot}
\newcommand\dom{\operatorname{dom}}
\newcommand\inter{\operatorname{int}}
\DeclareMathOperator{\interior}{int}
\newcommand{\En}{\mathcal{E}}
\newcommand{\wass}{\operatorname{W}}
\newcommand{\diam}{\operatorname{diam}}
\newcommand*{\dd}%
  {\relax\ifnum\lastnodetype>0\mskip\medmuskip\fi\mathrm{d}}
\newcommand{\rs}{\operatorname{\rho}}
\newcommand{\grad}{\operatorname{\nabla}}
\newcommand{\hf}{\mathsf{h}}
\newcommand{\Pf}{\mathsf{P}}
\newcommand{\EM}{\mathscr{E\kern-2pt M}}
\newcommand{\pot}{\varphi}
\newcounter{In}
\newcommand{\In}{\refstepcounter{In}\Circled{\theIn}}
\newcommand{\Inref}[1]{\Circled{\ref{#1}}}
\newcommand\ffi{\varphi}
\newcommand\fios{\varphi_{OS}}
\def\s{\sigma}
\def\S{\Sigma}
\def\al{\alpha}
\def\be{\beta}
\def\8{\infty}
\def\disp{\displaystyle}
\newcommand{\BBone}{\mathbbm{1}}
\newcommand{\ninf}{{n\to+\8}}
\def\N{\mathbb{N}}
\def\R{\mathbb{R}}
\def\P{\mathbb{P}}
\newcommand{\Z}{{\mathbb Z}}
\newcommand{\CP}{\mathcal{P}}
\newcommand{\ol}{\overline}
\newcommand{\ul}{\underline}
\newcommand\hS{\widehat{S}}
\begin{document}

\title{Nonlinear thermodynamical formalism}


\author{J\'er\^ome Buzzi}
\address{Laboratoire de Math\'ematiques d'Orsay - CNRS \& Universit\'e Paris-Sud}
\thanks{JB was partially supported by the ISDEEC project ANR-16-CE40-0013.}
\email{jerome.buzzi@universite-paris-saclay.fr}

\author{Beno\^{\i}t R. Kloeckner}
\address{Univ Paris Est Creteil, CNRS, LAMA, F-94010 Creteil, France;
Univ Gustave Eiffel, LAMA, F-77447 Marne-la-Vallée, France}
\email{benoit.kloeckner@u-pec.fr}

\author{Renaud Leplaideur}
\thanks{RL wants to thank ERC project 692925 NUHGD for kind support for a visit to Orsay in September 2018}
\address{ISEA, Universit\'e de la Nouvelle-Cal\'edonie \& LMBA UMR6205}
\email{renaud.leplaideur@unc.nc}

\begin{date}
{\today}
\end{date}

\begin{abstract}
We define a nonlinear thermodynamical formalism which translates into dynamical system theory the statistical mechanics of  generalized mean-field models, extending investigation of the quadratic case by Leplaideur and Watbled.

Under suitable conditions, we prove a variational principle for the nonlinear pressure and we characterize the nonlinear equilibrium measures and relate them to specific classical equilibrium measures.

In this non-linear thermodynamical formalism, which can, e.g., model mean-field approximation of large systems, several kind of phase transitions appear, some of which cannot happen in the linear case. We use our correspondence between non-linear and linear equilibrium measures to further the understanding of phase transitions, { both in previously known cases (Curie-Weiss and Potts models) and in} new examples (metastable phase transition).

Finally, we apply  some of the ideas introduced to the classical thermodynamical formalism, proving that freezing phase transitions can occur over \emph{any} zero-entropy invariant compact subset of the phase space.
\end{abstract} 
\maketitle


\section{Introduction}

In the 1970s, Sinai, Ruelle, Bowen, and others (see, e.g., \cite{Sinai-GibbsMeasures,Ruelle-book,Bowen-book}) developed a thermodynamical approach to dynamical systems inspired by the statistical mechanics of lattice systems. 
In a recent work \cite{Leplaideur-Watbled}, the third named author and Watbled  applied this program to the Curie-Weiss mean-field theory: they introduced a new thermodynamical formalism over the full shift where the energy functional is quadratic. They obtained precise results using the specific structure of this setting. 

Our goal in this paper is to understand the generality of their results. It turns out that we can define the nonlinear pressure of a measure as the sum of its entropy and its ``energy'', defined as \emph{any weak-star continuous function} of the measure. We are in particular interested in the case when the energy is a smooth function of the integrals of one or several potentials, in which case we call it an \emph{energy with potential(s)}. Assuming only that the classical thermodynamical formalism is well-behaved, we can analyze this \emph{nonlinear} thermodynamics using suitable convex analysis.

We first prove a variational principle: the supremum of the nonlinear pressure of the measures is given by a combinatorial formula involving the classical  separated sets for the Bowen-Dinaburg dynamical metric (Theorem \ref{thm-var-prin-NL}), then defining \emph{equilibrium measure} as those measures achieving the previous supremum. It is easy to show that equilibrium measures exist and, in the expansive case, we relate them to Gibbs ensembles (Theorem \ref{thm-existence-NL}). In the case of energies with potentials we  show that equilibrium measures are classical equilibrium measures for some specific linear combination of these potentials (Theorem \ref{thm-equilibria}).
When the nonlinearity is a real-anaytic function of the integral of a single potential, we obtain finiteness of the set of equilibrium measures (Theorem \ref{thm-finiteness}). As is well-known from physics and examples including the Curie-Weiss theory, \emph{phase transitions} can occur in this setting, e.g., there may be several equilibrium measures that may depend non-analytically on parameters giving rise to \emph{freezing} (Theorem \ref{thm-fpt} and Section \ref{sec:transition}) or \emph{metastable phase transitions.} (Theorem \ref{th-metastable} in Section \ref{sec-metastable}).

\subsection{Classical thermodynamical formalism}
We recall the classical definitions (see, e.g., \cite{Walters-book}). We will sometimes call these notions \emph{linear} to distinguish them from the ones we introduce in this paper.

Let $(T,\pot)$ be a continuous system, i.e., a continuous self-map $T:X\to X$ of a compact metric space  together with a continuous function $\pot \in C(X,\RR)$. 
The function $\pot$ is called the \emph{potential}.  We denote by $\Prob$ the set of Borel probability measures on $X$, endowed with the weak star topology, by $\Prob(T)$ the subset of $T$-invariant measures and by $\Proberg(T)$ the subset of ergodic and invariant measures.

The weight of order $n$ of a finite subset $C\subset M$ is:
 $$
    w_n(C) := \sum_{x\in\mathcal C} \exp \left(S_n\pot(x)\right)
 $$ 
 where $S_n\pot$ denotes a Birkhoff sum:
 $$
   S_n\pot(x):=\pot(x)+\pot(Tx)+\dots+\pot(T^{n-1}x).
 $$
Given $\eps>0$ and $n\in\NN$, the \emph{Bowen-Dinaburg dynamical balls} are the sets
 $$
    B(x,\eps,n):=\big\{y\in X: \forall 0\leq k<n,\; d(T^ky,T^kx)<\eps \big\}.
 $$
A finite set $\mathcal C$ is an \emph{$(\eps,n)$-covering} when $\bigcup_{x\in \mathcal C} B(x,\eps,n)=X$. It is an \emph{$(\eps,n)$-separated} subset when for all distinct $x, x'\in \mathcal{C}$, $x'\notin B(x,\eps,n)$. The \emph{partition function} is:
 $$
    Z(\eps,n) := \sup_{\mathcal C} w_n(\mathcal C)
 $$
where $\mathcal C$ ranges over the $(\eps,n)$-separated subsets of $X$.
  
An $(\eps,n)$-separated set $\mathcal C$ is said to be \emph{adapted} when it realizes the supremum in $Z(\eps,n)$, and each adapted set defines a probability measure
\begin{equation}
    \mu_{\mathcal{C}} := \frac{1}{Z(n,\eps)} \sum_{x\in\mathcal C} e^{S_n\phi(x)} \frac{\delta_x+\delta_{Tx}+\dots+\delta_{T^{n-1}x}}{n}.
 \end{equation}
called an \emph{$(\eps,n)$-Gibbs ensemble}.

The (linear) topological pressure is:
  \begin{equation}\label{eq-VP-classic}
  P_\top(T,\pot):=
    \lim_{\eps\to0} \limsup_{n\to\infty}\frac1n\log Z(\eps,n).
  \end{equation}

  Recall that the (linear) pressure of a measure $\mu\in\Prob(T)$ with respect to the potential $\pot$ is (denoting by $h$ the Kolmogorov-Sinai entopy):
 $$
     P(T,\pot,\mu):= h(T,\mu) +   \int\pot\,d\mu.
  $$
  
The variational principle states  that:
 \begin{equation}\label{eq-VP}
        P_\top(T,\pot)  =  \sup_{\mu\in\Prob(T)} P(T,\pot,\mu).
 \end{equation}
 An \emph{equilibrium measure} for $(T,\pot)$ is then an invariant probability measure $\mu$ such that
 $P(T,\pot,\mu)=P_\top(T,\pot)$, i.e., a measure that achieves the above supremum.

\medskip
{ The (linear) pressure function is the function $\be\mapsto P_{\top}(T,\beta\pot)$ where $\be$ is a real parameter, called the \emph{inverse of temperature}.}


\subsection{Nonlinear formalism}
We propose the following generalization. It will prove convenient to write $\mu(\pot)$ for $\int\pot \dd\mu$. We consider again a continuous map $T$ acting on a compact metric space $X$.

An \emph{energy} is a function $\En : \Prob \to \mathbb{R}$ which is  continuous in the weak star topology; note that we will need the energy of non-invariant measures. We say that $\En$ is an energy \emph{with potential} $\pot$ (a continuous function defined on $X$) if it can be written
 $$
    \En(\mu) = F\big(\mu(\pot)\big)
 $$
for some continuous function $F:I\to\mathbb{R}$ defined on an interval containing all values taken by $\pot$. More generally, an \emph{energy with potentials} takes the form
 \begin{equation}\label{eq-Energy}
     \En(\mu) = F\big(\mu(\pot_1), \dots, \mu(\pot_d)\big)
 \end{equation}
where $\pot_1,\dots,\pot_d$ are continuous functions defined on $X$ and $F:U\to\mathbb{R}$ is a continuous function on some set $U\subset\RR^d$. For $\En$ to be well-defined on the whole of $\Prob$, the set $U$ must contain the convex hull of the set of values taken by $\vec\pot = (\pot_1,\dots,\pot_d):X\to\mathbb{R}^d$.
We add the adjective ``$C^r$'' ($r\ge 1$), ``smooth'' or ``analytic'' to $\En$ whenever the domain $U$ of $F$ is open and $F$ is $C^r$ ($r=\infty$ meaning smooth, $r=\omega$ meaning analytic) on $U$.

An energy is said to be \emph{convex} when for all Borel probability measure $\xi$ on $\Prob$ (hence, $\xi$ is a measure of measures):
 $$
    \En\Big(\int \nu \dd\xi(\nu) \Big) \le \int \En(\nu) \dd\xi.
 $$
For example, if $\En$ is an energy with potentials, it is convex whenever $F$ is.
\bigskip

Not assuming potentials, we first need to replace Birkhoff sums. Given $x\in X$ and $n\in\mathbb{N}$, we define an \emph{empirical measure} 
 $$
    \Delta_x^n := \frac1n \sum_{i=0}^{n-1} \delta_{T^ix} .
 $$
Observe that for any potential $\pot$, $\Delta_x^n(\pot)=\frac1n S_n\pot(x)$ is the averaged Birkhoff sum. We thus define the \emph{nonlinear weight} of order $n$ of a finite set $\mathcal C$ and the \emph{nonlinear partition function} as 
 $$
    \omega_n(\mathcal{C}) = \sum_{x\in \mathcal{C}} e^{n\En(\Delta_x^n)} \qquad
    \zeta(\eps,n) = \sup_{\mathcal{C}} \omega_n(\mathcal{C}),
 $$
where the supremum is taken over all $(\eps,n)$-separated sets $\mathcal C$.

Again, an $(\eps,n)$-separated set $\mathcal C$ is said to be \emph{adapted} if it realizes the maximum in $\zeta_n$ and we define an \emph{nonlinear $(\eps,n)$-Gibbs ensemble} 
\begin{equation}\label{eq-def-ensemble}
    \mu_{\mathcal C} := \frac{1}{\zeta(\eps,n)} \sum_{x\in\mathcal C} e^{n \En(\Delta_x^n)} \Delta_x^n \quad \in\Prob
 \end{equation}
(note that the continuity of $T$ ensures that the maximum in $\zeta$ is realized for all $(\eps,n)$).

The \emph{nonlinear topological pressure}, to be thought of as an analogue of topological entropy weighted by energy, is 
\begin{equation}
    \NLP_\top^\En(T) = \lim_{\eps\to 0} \limsup_{n\to\infty} \frac1n \log \zeta(\eps,n).
\label{eq:NLP}
\end{equation}
In Theorem \ref{thm-var-prin-NL} we will show that under suitable hypotheses, replacing the supremum limit by an infimum limit:
 $$
    \underline{\NLP}_\top^\En(T) =  \lim_{\eps\to 0} \liminf_{n\to\infty} \frac1n \log \zeta(\eps,n)
 $$
gives the same quantity as $\NLP_\top^\En(T) $.
Meanwhile the \emph{nonlinear pressure} is defined for all invariant probability measures $\mu$ by
 $$
    \NLP^\En(T,\mu) = h(T,\mu) + \mathcal{E}(\mu).
 $$

\subsection{Main results}

For certain nonlinear systems $(T,\En)$, it may happen that some measures satisfy $\NLP^\En(T,\mu) > \NLP_\top^\En(T)$; we will first give conditions excluding this.

\begin{definition} 
We will say that $(T,\En)$ has an \emph{abundance of ergodic measures} if for any $\mu\in\Prob(T)$ and $\eps>0$, there is an ergodic measure $\nu\in \Proberg(T)$ such that { $h(T,\nu)+\En(\nu)> h(T,\mu)+\En(\mu)-\eps$}.
\end{definition}
 This condition is satisfied by uniformly hyperbolic diffeomorphisms that have a single basic set in their spectral decomposition as any invariant probability measure can be approximated by an ergodic one { both in the weak star topology and in entropy}. It is also satisfied for arbitrary continuous systems $(T,\En)$ with convex energy, since, in this case, for any $\mu\in\Prob(T)$,
 $$
   \NLP^\En(T,\mu) \le \int h(T,\mu_\xi) + \mathcal{E}(\mu_\xi)\, dP(\xi)
 $$
using the ergodic decomposition $\mu=\int \mu_\xi\, dP(\xi)$.


\medbreak

Recall that, in the invertible case, $T$ is said to be an \emph{expansive homeomorphism} when there exist a number $\eps_0>0$ (called an \emph{expansivity constant} for $T$) such that
 $$
  \forall x,y\in X\quad \sup_{n\in\ZZ} d(T^nx,T^ny)\leq\eps_0 \implies x=y
 $$
(see, e.g., \cite{Katok-Hasselblatt} Definition 3.2.11; note that here we use a $\le$ sign, making the expansivity constants possibly slightly smaller).
This notion is generalized to non-necessarily invertible maps under the name of \emph{positive expansivity} by considering only the positive orbits:
 $$
  \forall x,y\in X\quad \sup_{n\ge0} d(T^nx,T^ny)\leq\eps_0 \implies x=y.
 $$
and the results we state below for expansive homeomorphisms could be extended to positively expansive map with the same proofs.
\bigbreak
 
Our first result establishes a variational principle generalizing eq.~\eqref{eq-VP} to all energies.

\begin{maintheorem}[Variational principle]\label{thm-var-prin-NL}
Let $T:X\to X$ be a continuous map of a compact space and let $\En:\Prob\to\RR$ be an energy. 
{\JB Assume that $(T,\En)$  has an abundance of ergodic measures,} 

Then the nonlinear topological pressure satisfies:
  \begin{equation}\label{eq-def-Ptop}
      \sup_{\mu\in\Prob(T)} \NLP^\En(T,\mu) = \NLP^\En_\top(T) = \underline{\NLP}_\top^\En(T)
  \end{equation}
  {\JB If, additionally,} $T$ is an expansive homeomorphism with some constant $\eps_0>0$, then
  $$
   \NLP^\En_\top(T) =\lim_n \frac1n \log \zeta(\eps_0,n).
   $$
\end{maintheorem}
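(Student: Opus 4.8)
The plan is to sandwich the nonlinear topological pressure between $\underline{\NLP}_\top^\En(T)$ and $\sup_{\mu\in\Prob(T)}\NLP^\En(T,\mu)$, and to note that the trivial inequality $\underline{\NLP}_\top^\En(T)\le\NLP^\En_\top(T)$ then closes the chain of equalities \eqref{eq-def-Ptop}. Fix once and for all a \emph{convex} metric $D$ inducing the weak-star topology on $\Prob$, say $D(\mu,\nu)=\sum_{k\ge1}2^{-k}\bigl|\mu(f_k)-\nu(f_k)\bigr|$ with $(f_k)$ dense in the unit ball of $C(X,\RR)$; since $\Prob$ is compact, $\En$ is bounded and uniformly continuous, with some modulus $\eta(\cdot)$. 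Two elementary facts are used throughout: $D(\Delta_x^n,T_*\Delta_x^n)\le 2/n$, so every weak-star accumulation point of empirical measures, or of their averages, is $T$-invariant; and if $d(T^kx,T^ky)<\eps$ for $0\le k<n$ then $D(\Delta_x^n,\Delta_y^n)\le\eps$.

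\emph{Lower bound $\sup_{\mu\in\Prob(T)}\NLP^\En(T,\mu)\le\underline{\NLP}_\top^\En(T)$.} By the abundance of ergodic measures it suffices to treat $\mu=\nu$ ergodic. Fix $\eps,\rho>0$. By Birkhoff's theorem the set $R_n:=\{x:D(\Delta_x^n,\nu)<\rho\}$ satisfies $\nu(R_n)\to1$. A maximal $(\eps,n)$-separated subset $\mathcal D_n\subset R_n$ is an $(\eps,n)$-cover of $R_n$, so by Katok's entropy formula — which, crucially, holds with a lower limit in $n$, see \cite{Katok-Hasselblatt} — one has $\liminf_n\tfrac1n\log\#\mathcal D_n\ge h(T,\nu)-\beta(\eps)$, with $\beta(\eps)\to0$ as $\eps\to0$. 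Since $\En(\Delta_x^n)\ge\En(\nu)-\eta(\rho)$ for $x\in\mathcal D_n\subset R_n$, we get $\liminf_n\tfrac1n\log\zeta(\eps,n)\ge\liminf_n\tfrac1n\log\omega_n(\mathcal D_n)\ge h(T,\nu)+\En(\nu)-\beta(\eps)-\eta(\rho)$. Letting $\rho\to0$, then $\eps\to0$, then taking the supremum over ergodic $\nu$ and invoking abundance gives the claim.

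\emph{Upper bound $\NLP^\En_\top(T)\le\sup_{\mu\in\Prob(T)}\NLP^\En(T,\mu)$.} Fix $\eps>0$; it is enough to bound $\limsup_n\tfrac1n\log\zeta(\eps,n)$ and then let $\eps\to0$. Fix $\rho>0$ and cover $\Prob$ by finitely many Borel cells $P_1,\dots,P_N$ of $D$-diameter $<\rho$. For an adapted $(\eps,n)$-separated set $\mathcal C_n$ put $\mathcal C_n^j=\{x\in\mathcal C_n:\Delta_x^n\in P_j\}$ and choose $j=j_n$ maximising $\omega_n(\mathcal C_n^j)$, so $\omega_n(\mathcal C_n^{j_n})\ge\zeta(\eps,n)/N$. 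Along a subsequence we may assume $j_n\equiv j$, that $\tfrac1n\log\zeta(\eps,n)$ converges to its upper limit, and that $\mu_n:=\tfrac1{\#\mathcal C_n^{j}}\sum_{x\in\mathcal C_n^{j}}\Delta_x^n$ converges weak-star to a $T$-invariant $\mu$. Convexity of $D$ gives $D(\mu_n,\Delta_x^n)\le\diam(P_j)<\rho$, hence $\En(\Delta_x^n)\le\En(\mu_n)+\eta(\rho)$ on $\mathcal C_n^{j}$. On the other hand, choosing — \emph{after} $\mu$ is known — a finite partition $\mathcal Q$ with atoms of diameter $<\eps$ and $\mu(\partial\mathcal Q)=0$, distinct points of the $(\eps,n)$-separated set $\mathcal C_n^{j}$ fall into distinct atoms of $\bigvee_{k=0}^{n-1}T^{-k}\mathcal Q$, and Misiurewicz's combinatorial estimate from the proof of the classical variational principle (\cite{Walters-book}), applied to the uniform measure on $\mathcal C_n^{j}$ — whose Birkhoff average is precisely $\mu_n$ — yields $\limsup_n\tfrac1n\log\#\mathcal C_n^{j}\le h(T,\mu)$. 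Combining,
$$\limsup_n\tfrac1n\log\zeta(\eps,n)\ \le\ h(T,\mu)+\En(\mu)+\eta(\rho)\ \le\ \sup_{\mu'\in\Prob(T)}\NLP^\En(T,\mu')+\eta(\rho),$$
and letting $\rho\to0$ finishes this inequality. I expect this to be the main obstacle: because the weight $e^{n\En(\Delta_x^n)}$ genuinely varies over $\mathcal C_n$ it cannot be factored out, and the cell decomposition together with the convexity of $D$ is exactly what lets a single invariant measure simultaneously control the cardinality (through entropy) and the energy.

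\emph{The expansive case.} Assume in addition that $T$ is an expansive homeomorphism with constant $\eps_0$; we already know $\underline{\NLP}_\top^\En(T)=\NLP^\En_\top(T)$. The classical consequence of expansiveness is: for each $\delta\in(0,\eps_0)$ there is $N=N(\delta)$ such that $d(T^kx,T^ky)\le\eps_0$ for $0\le k\le N$ forces $d(x,y)<\delta$ (a compactness argument; in the non-invertible case one uses positive expansivity verbatim). Applying it at every time $0\le j\le n-1$ shows that any $(\delta,n)$-separated set is $(\eps_0,n+N)$-separated (for a homeomorphism one uses the two-sided version of the consequence and, after translating by $T^{-N}$, gets an $(\eps_0,n+2N)$-separated set). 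Since $D(\Delta_x^{n+N},\Delta_x^n)=O(N/n)$ uniformly in $x$ while $\En$ is bounded and uniformly continuous, this gives $\zeta(\eps_0,n+N)\ge e^{-o(n)}\zeta(\delta,n)$, hence $\liminf_m\tfrac1m\log\zeta(\eps_0,m)\ge\liminf_n\tfrac1n\log\zeta(\delta,n)$ for every $\delta\in(0,\eps_0)$; sending $\delta\to0$ yields $\liminf_m\tfrac1m\log\zeta(\eps_0,m)\ge\underline{\NLP}_\top^\En(T)=\NLP^\En_\top(T)$. As monotonicity $\zeta(\eps_0,m)\le\zeta(\eps,m)$ for $\eps\le\eps_0$ also gives $\limsup_m\tfrac1m\log\zeta(\eps_0,m)\le\NLP^\En_\top(T)$, the limit $\lim_m\tfrac1m\log\zeta(\eps_0,m)$ exists and equals $\NLP^\En_\top(T)$.
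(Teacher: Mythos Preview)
Your proof is correct and, for the variational principle itself, follows essentially the same architecture as the paper: the lower bound via the ergodic theorem plus an entropy formula for ergodic measures (you cite Katok's covering formula where the paper uses Brin--Katok, but these are equivalent here), and the upper bound via a finite partition of $\Prob$ into small cells followed by Misiurewicz's combinatorial estimate. Your single-cell version of the upper bound is a streamlined variant of the paper's Lemma~\ref{lem-dividing}/\ref{lem-clusterpoints}; the paper keeps track of \emph{all} significant cells because it reuses the construction for Theorem~\ref{thm-existence-NL} on Gibbs ensembles, but for Theorem~\ref{thm-var-prin-NL} alone your shortcut suffices.

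The expansive part is where your argument genuinely diverges from the paper's. The paper fixes $n$ and extracts from an $(\eps,n)$-separated set a maximal $(\eps_0,n)$-separated subset, then shows via a coding argument that each $(\eps_0,n)$-ball meets the original set in at most $|C_\eps|^2$ points; this yields a direct comparison of weights at the same time $n$. You instead use uniform expansiveness to embed $(\delta,n)$-separated sets (after a time shift by $T^{-N}$) into $(\eps_0,n+2N)$-separated sets, and then control the change in energy via $D(\Delta_{T^{-N}x}^{\,n+2N},\Delta_x^n)=O(N/n)$ and the uniform continuity of~$\En$. Both routes are valid; yours is shorter and avoids the combinatorial multiplicity bound, while the paper's has the minor advantage of comparing partition functions at the \emph{same} time index. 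One expository point: your first formulation of uniform expansiveness (``$d(T^kx,T^ky)\le\eps_0$ for $0\le k\le N$'') is the positively expansive statement, which is \emph{false} for a merely expansive homeomorphism; you correctly note the two-sided fix in the parenthetical, but since the theorem is stated for expansive homeomorphisms you should lead with the two-sided version.
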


When the conclusion $\sup_{\Prob(T)} \NLP^\En(T,\cdot) = \NLP^\En_\top(T)$ of the {\BK above} theorem holds, we define a \emph{nonlinear equilibrium measure} as any measure $m\in\Prob(T)$ realizing this supremum:
  $$
    \NLP^\En(T,m)=\max_{\Prob(T)} \NLP^\En(T,\cdot).
  $$

As in the classical setting, existence of an equilibrium measure is easily  obtained  when entropy is upper semicontinuous, and in the expansive case equilibrium measures prescribe the asymptotic behavior of Gibbs ensembles.

\begin{maintheorem}\label{thm-existence-NL}
{\JB Let $T:X\to X$ be a continuous map of a compact space and let $\En:\Prob\to\RR$ be an energy.  Assume that $(T,\En)$  has an abundance of ergodic measures.}  

If $\mu\mapsto h(T,\mu)$ is upper semicontinuous\footnote{This holds, e.g., if $(X,T)$ is a subshift \cite{Walters-book} or a $C^\infty$ smooth map \cite{Buzzi-SIM}.}, then there exists at least one nonlinear equilibrium measure.
 
If additionally $T$ is an expansive homeomorphism for some constant $\eps_0>0$, then any accumulation point $\mu$ of any sequence $(\mu_{\mathcal{C}_n})_{n\in\mathbb{N}}$ of nonlinear Gibbs $(\eps_0,n)$-ensembles belongs to the closure of the convex span of all nonlinear equilibrium measures.
\end{maintheorem}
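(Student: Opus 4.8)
The plan is to treat the two assertions in turn. Existence is soft: $\Prob(T)$ is weak-$*$ compact and convex, $\En$ is weak-$*$ continuous by definition, and $h(T,\cdot)$ is upper semicontinuous by hypothesis, so $\NLP^\En(T,\cdot)=h(T,\cdot)+\En$ is an upper semicontinuous function on a compact set and attains its maximum; by the variational principle (Theorem~\ref{thm-var-prin-NL}, applicable since $(T,\En)$ has an abundance of ergodic measures) that maximum equals $\NLP^\En_\top(T)$, so the maximizer is by definition a nonlinear equilibrium measure. An expansive homeomorphism of a compact metric space has both finite topological entropy and upper semicontinuous metric entropy, so the first assertion applies in the setting of the second; in particular the set $\mathcal M_{\mathrm{eq}}$ of nonlinear equilibrium measures is nonempty, and for $\delta>0$ the set $A_\delta:=\{\nu\in\Prob(T):\NLP^\En(T,\nu)\ge\NLP^\En_\top(T)-\delta\}$ is compact with $\bigcap_{\delta>0}A_\delta=\mathcal M_{\mathrm{eq}}$.

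For the second assertion fix an expansivity constant $\eps_0$; by Theorem~\ref{thm-var-prin-NL} we may work at this single scale, $\NLP^\En_\top(T)=\lim_n\frac1n\log\zeta(\eps_0,n)$. Let $\mathcal C_n$ be adapted $(\eps_0,n)$-separated sets, set $p_x=e^{n\En(\Delta_x^n)}/\zeta(\eps_0,n)$ for $x\in\mathcal C_n$ so that $\mu_{\mathcal C_n}=\sum_x p_x\Delta_x^n$, and average the identity $\log\zeta(\eps_0,n)=n\En(\Delta_x^n)-\log p_x$ against the weights $p_x$ to obtain
\[
  \tfrac1n\log\zeta(\eps_0,n)=\sum_{x\in\mathcal C_n}p_x\,\En(\Delta_x^n)+\tfrac1n H(p),\qquad H(p):=-\sum_{x\in\mathcal C_n} p_x\log p_x .
\]
Fix $\eta>0$ and cover the compact space $\Prob$ by finitely many convex open sets $O_1,\dots,O_N$ on each of which the oscillation of $\En$ is at most $\eta$ (possible because the weak-$*$ topology is locally convex and $\En$ is continuous). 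Choose for each $x\in\mathcal C_n$ an index $j(x)$ with $\Delta_x^n\in O_{j(x)}$ and put $\mathcal C_n^j=\{x:j(x)=j\}$, $P_j=\sum_{\mathcal C_n^j}p_x$, $q^j=(p_x/P_j)_{x\in\mathcal C_n^j}$, $\mu_n^j=\sum_{\mathcal C_n^j}\tfrac{p_x}{P_j}\Delta_x^n\in O_j$, and $\sigma_n^j=\sum_{\mathcal C_n^j}\tfrac{p_x}{P_j}\delta_x$. The chain rule $H(p)=H((P_j)_j)+\sum_j P_jH(q^j)$ together with $H((P_j)_j)\le\log N$ and the $\eta$-oscillation bound turn the displayed identity into
\[
  \NLP^\En_\top(T)\ \le\ \limsup_n\ \Big(\sum_j P_j\,\En(\mu_n^j)+\sum_j P_j\,\tfrac1n H(q^j)\Big)+\eta .
\]

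The heart of the matter is a Misiurewicz-type bound on $\frac1n H(q^j)$. Pick a finite measurable partition $\xi$ of $X$ with $\diam\xi<\eps_0$ whose boundary is null for every weak-$*$ accumulation point $\nu^j$ of $(\mu_n^j)_n$ — finitely many measures, so such a $\xi$ exists by the usual construction from small balls with null-measure spheres. Since $\mathcal C_n^j$ is $(\eps_0,n)$-separated, each atom of $\xi^n=\bigvee_{i<n}T^{-i}\xi$ carries at most one point of $\mathcal C_n^j$, hence $H(q^j)=H_{\sigma_n^j}(\xi^n)$; as $\frac1n\sum_{i<n}T^i_*\sigma_n^j=\mu_n^j$, Misiurewicz's averaging lemma gives $\limsup_n\tfrac1n H(q^j)\le h(T,\nu^j)$ (along any subsequence realizing the limit $\nu^j$). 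Moreover $\|T_*\mu_n^j-\mu_n^j\|\le2/n$, so each $\nu^j$ is $T$-invariant and $\NLP^\En(T,\nu^j)\le\NLP^\En_\top(T)$ by the variational principle. Passing to a subsequence of the given sequence along which $\mu_{\mathcal C_n}\to\mu$, $P_j\to P_j^\infty$, $\mu_n^j\to\nu^j$ and $\frac1n H(q^j)$ converges, for every $j$, the last inequality becomes $\NLP^\En_\top(T)\le\sum_j P_j^\infty\NLP^\En(T,\nu^j)+\eta$, while $\mu=\sum_j P_j^\infty\nu^j$. Since each $\NLP^\En(T,\nu^j)\le\NLP^\En_\top(T)$, a Chebyshev-type estimate shows the total $P_j^\infty$-weight of the indices with $\NLP^\En(T,\nu^j)<\NLP^\En_\top(T)-\sqrt\eta$ is $O(\sqrt\eta)$; discarding and renormalizing those terms exhibits $\mu$ within total-variation distance $O(\sqrt\eta)$ of a convex combination of measures lying in $A_{\sqrt\eta}$. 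Letting $\eta\to0$: for each fixed $\delta>0$ we have $A_{\sqrt\eta}\subseteq A_\delta$ once $\sqrt\eta\le\delta$, so $\mu\in\overline{\operatorname{conv}}(A_\delta)$ (a closed set), hence $\mu\in\bigcap_{\delta>0}\overline{\operatorname{conv}}(A_\delta)$. Finally $\bigcap_{\delta>0}\overline{\operatorname{conv}}(A_\delta)=\overline{\operatorname{conv}}(\mathcal M_{\mathrm{eq}})$: writing a point of the left-hand side as the barycenter of a probability measure $m_\delta$ on the compact set $A_\delta$, any weak-$*$ limit of $(m_\delta)_{\delta\to0}$ is supported in $\bigcap_\delta A_\delta=\mathcal M_{\mathrm{eq}}$ and still has that barycenter, by continuity of the barycenter map. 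Therefore $\mu\in\overline{\operatorname{conv}}(\mathcal M_{\mathrm{eq}})$.

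I expect the main obstacle to be the Misiurewicz-type entropy estimate in precisely the form needed — with non-uniform weights $q^j$, on the separated subset $\mathcal C_n^j$, and with one partition $\xi$ simultaneously fine enough to separate points and having null boundary for all $N$ accumulation measures $\nu^j$ — along with the bookkeeping needed to pass to a single subsequence controlling all $N$ pieces at once while making the dependence on $\eta$ explicit. The closing identity $\bigcap_\delta\overline{\operatorname{conv}}(A_\delta)=\overline{\operatorname{conv}}(\mathcal M_{\mathrm{eq}})$ is routine but, as indicated, should be justified via barycenters rather than taken for granted.
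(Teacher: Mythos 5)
Your proof is correct and is essentially the paper's own argument: existence is obtained exactly as in Lemma~\ref{lemm-existence} (upper semicontinuity of $h(T,\cdot)+\En$ on the compact set $\Prob(T)$ plus Theorem~\ref{thm-var-prin-NL}), and for the Gibbs ensembles you, like the paper (Lemmas~\ref{lem-dividing} and~\ref{lem-clusterpoints}), split the adapted $(\eps_0,n)$-separated set according to a finite decomposition of $\Prob$ on which $\En$ is nearly constant, apply the Misiurewicz averaging lemma (Lemma~\ref{lem-Misiurewicz}) to each piece — using that each atom of $\xi^n$ meets the separated set in at most one point and that the limits are $T$-invariant — and recombine convexly. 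The differences are only in bookkeeping: you keep all pieces and dispose of the low-pressure ones at the limit by a Chebyshev bound, starting from the exact identity $\frac1n\log\zeta(\eps_0,n)=\sum_x p_x\En(\Delta_x^n)+\frac1n H(p)$ and the entropy chain rule, where the paper discards low-weight pieces beforehand and bounds $H_{\sigma_k}(\xi^{n_k})$ through cardinality estimates; and you conclude with a Choquet/barycenter argument identifying $\bigcap_\delta\overline{\operatorname{conv}}(A_\delta)$ with $\overline{\operatorname{conv}}(\EM)$, where the paper instead uses the quantitative second half of Lemma~\ref{lemm-existence} (near-maximizers are Wasserstein-close to $\EM$); both closings are valid. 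One small repair: extract the subsequence along which every $\mu_n^j$ converges \emph{before} choosing the partition $\xi$ — the accumulation set of $(\mu_n^j)_n$ need not be finite as you assert, but after extraction there are only the $N$ limits $\nu^j$, and a partition of diameter $<\eps_0$ with $\nu^j$-null boundaries for all $j$ then exists exactly as you describe, so the argument goes through unchanged.
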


The last statement means that there exists a probability measure $\xi$ on $\Prob$ (a measure of measures), concentrated on the set $\EM$ of equilibrium measures, such that
\[ \mu = \int_{\EM} \nu \dd\xi(\nu)\]
(see, e.g., \cite{Phelps-book}, Proposition 1.2.) The accumulation points can indeed fail to be equilibrium measures, e.g., in the Curie-Weiss model when there are two asymmetric equilibrium measures and one chooses symmetric Gibbs ensembles, see \cite{Leplaideur-Watbled}.

\bigbreak
Next we study the uniqueness and nature of the nonlinear equilibrium measures in the case of an energy with potentials as in eq.~\eqref{eq-Energy}. Our main point here is that we can use classical convex analysis to reduce the nonlinear thermodynamical formalism to the linear one.
 
More precisely, we will use the classical Legendre duality between entropy and pressure; using the vector of integral of potentials $(\mu(\pot_1),\dots,\mu(\pot_d))$ as intermediate coordinates, this will reduce to finite-dimensional Legendre duality. This duality holds for the class of  \emph{$C^r$ Legendre} systems $(T,\vec\pot)$ (where $r\in\mathbb{N}^*\cup\{\infty,\omega\}$  and $C^\omega$ means analytic), see Definitions \ref{defi-Legendre}, \ref{defi-regular}.
When additionally each linear combination of the $(\pot_i)$ admits a unique linear equilibrium measure, one says that $(T,\vec\pot)$ is \emph{$C^r$ Legendre with unique linear equilibrium measures}.
 
Let us note that classical examples fulfill these requirements: if $T$ is a topologically transitive Anosov diffeomorphism or expanding map, and $(\pot_1,\dots,\pot_d)$ is a family of H\"older-continuous potentials whose linear combinations are not cohomologuous to a constant, i.e., for all $\alpha_1,\dots,\alpha_d\in\mathbb{R}^d$:
 $$
  \Big(\exists u\in C(X,\mathbb{R}), \exists c\in\mathbb{R}\colon  \sum_{i=1}^d \alpha_i\pot_i = u-u\circ T + c \Big)\implies \alpha_1=\dots=\alpha_d=0,
 $$
then $(T,\vec\pot)$ is $C^r$ Legendre with unique linear equilibrium measures.


\begin{maintheorem}\label{thm-equilibria}
Assume that $(T,\vec\pot)$ is {\JB $C^r$ Legendre,} that $F:U\subset\mathbb{R}^d\to\mathbb{R}$ is $C^r$ and consider the energy with potentials $\En(\mu) = F\big(\mu(\pot_1),\dots,\mu(\pot_d)\big)$. Then there is a nonempty compact subset $\mathscr{Y}\subset\RR^d$ such that the nonlinear equilibrium measures are exactly the linear equilibrium measures with respect to each of the potentials $\sum_i y_i\pot_i$ where $(y_1,\dots,y_d)\in\mathscr{Y}$.
\end{maintheorem}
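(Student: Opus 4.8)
The plan is to use the vector $a=(\mu(\pot_1),\dots,\mu(\pot_d))$ of integrals of the potentials as intermediate coordinates, so that the problem reduces to a finite-dimensional one governed by the Legendre duality between entropy and pressure. Write $\mathcal R\subset\RR^d$ for the compact convex set of all vectors $(\mu(\pot_1),\dots,\mu(\pot_d))$, $\mu\in\Prob(T)$ (working inside the affine hull of $\mathcal R$ should it not be full-dimensional), set
\[
 \hf(a):=\sup\{\,h(T,\mu):\mu\in\Prob(T),\ \mu(\pot_i)=a_i\ \text{for all }i\,\}\qquad(a\in\mathcal R),
\]
and let $\Pf(y):=P_\top\bigl(T,\sum_i y_i\pot_i\bigr)$ be the linear pressure function, $y\in\RR^d$. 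Grouping invariant measures according to their value of $a$ and invoking the variational principle (Theorem~\ref{thm-var-prin-NL}) gives
\[
 \NLP^\En_\top(T)=\sup_{\mu\in\Prob(T)}\bigl(h(T,\mu)+F(\mu(\vec\pot))\bigr)=\sup_{a\in\mathcal R}\bigl(\hf(a)+F(a)\bigr),
\]
and, since $h(T,\mu)\le\hf(\mu(\vec\pot))$ by definition of $\hf$, a measure $\mu$ is a nonlinear equilibrium measure if and only if $a:=\mu(\vec\pot)$ maximizes $\hf+F$ on $\mathcal R$ \emph{and} $h(T,\mu)=\hf(a)$. Affinity of $\mu\mapsto h(T,\mu)$ makes $\hf$ concave, and the defining properties of a $C^r$ Legendre system (Definitions~\ref{defi-Legendre}, \ref{defi-regular}) ensure that $\Pf$ is $C^r$, that $\nabla\Pf$ is a diffeomorphism of $\RR^d$ onto $\inter\mathcal R$, and that on $\inter\mathcal R$ the profile $\hf$ coincides with the (smooth, strictly concave) Legendre transform of $\Pf$, with $\nabla\Pf\bigl(-\nabla\hf(a)\bigr)=a$ there; moreover $\hf$ is upper semicontinuous on $\mathcal R$. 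Hence $\mathscr A:=\{a\in\mathcal R:\hf(a)+F(a)=\NLP^\En_\top(T)\}$ is nonempty and compact.

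First I would check that $\mathscr A\subset\inter\mathcal R$. If some $a^\ast\in\mathscr A$ lay on $\partial\mathcal R$, then $a^\ast$ would not belong to the image $\inter\mathcal R$ of $\nabla\Pf$, so, by the conjugacy between $\Pf$ and $-\hf$, the concave function $\hf$ would have empty superdifferential at $a^\ast$; a standard convex-analysis fact then yields a direction $u$ pointing into $\mathcal R$ along which $\bigl(\hf(a^\ast+tu)-\hf(a^\ast)\bigr)/t\to+\infty$ as $t\to0^+$. This contradicts the maximality of $a^\ast$, which, together with the smoothness of $F$, forces $\hf(a^\ast+tu)-\hf(a^\ast)\le F(a^\ast)-F(a^\ast+tu)=O(t)$. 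Therefore $\mathscr A\subset\inter\mathcal R$, and I set $\mathscr Y:=\nabla F(\mathscr A)$, a nonempty compact subset of $\RR^d$ as the continuous image of a nonempty compact set.

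It remains to match the two classes of measures. Fix $a^\ast\in\mathscr A$ and put $y^\ast:=\nabla F(a^\ast)\in\mathscr Y$. Since $a^\ast$ is an interior maximizer of $\hf+F$, the Euler equation $\nabla\hf(a^\ast)+\nabla F(a^\ast)=0$ combined with $\nabla\Pf\bigl(-\nabla\hf(a^\ast)\bigr)=a^\ast$ gives $\nabla\Pf(y^\ast)=a^\ast$, hence Young's equality $\Pf(y^\ast)=\hf(a^\ast)+\langle y^\ast,a^\ast\rangle$. Differentiability of $\Pf$ at $y^\ast$ moreover forces every linear equilibrium measure $m$ of the potential $\sum_i y^\ast_i\pot_i$ to satisfy $m(\vec\pot)=\nabla\Pf(y^\ast)=a^\ast$ --- compare, for each direction $u$, the one-sided slopes of $t\mapsto\Pf(y^\ast+tu)$ at $0$ with $\langle u,m(\vec\pot)\rangle$ --- and then $h(T,m)=\Pf(y^\ast)-\langle y^\ast,m(\vec\pot)\rangle=\hf(a^\ast)$, whence $\NLP^\En(T,m)=h(T,m)+F(m(\vec\pot))=\hf(a^\ast)+F(a^\ast)=\NLP^\En_\top(T)$: such an $m$ is a nonlinear equilibrium measure. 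Conversely, if $m$ is a nonlinear equilibrium measure, then by the characterization above $a:=m(\vec\pot)\in\mathscr A$ and $h(T,m)=\hf(a)$; taking $y:=\nabla F(a)\in\mathscr Y$, Young's equality gives $\Pf(y)=\hf(a)+\langle y,a\rangle=h(T,m)+\langle y,m(\vec\pot)\rangle$, so $m$ attains the linear topological pressure of $\sum_i y_i\pot_i$, i.e.\ is a linear equilibrium measure for it. This gives the claimed correspondence.

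The hard part, I expect, is not this duality bookkeeping but extracting from the definition of a $C^r$ Legendre system the two convex-analytic ingredients used above: that $\nabla\Pf$ maps \emph{onto} $\inter\mathcal R$ --- which is exactly what keeps the maximizers of $\hf+F$ away from $\partial\mathcal R$, hence $\mathscr Y$ bounded --- and the upper semicontinuity of the entropy profile $\hf$ needed to secure $\mathscr A\ne\emptyset$ (alternatively, one replaces $\hf$ by its upper concave envelope, which is automatically upper semicontinuous and agrees with $\hf$ on $\inter\mathcal R$, where all the relevant maximizers lie).
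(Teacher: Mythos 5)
Your proof is correct and follows the same overall strategy as the paper's (which proves the more general Theorem~\ref{thm:equilibria-G} for a fully nonlinear $G(z_0;z)$ and specializes to $G=z_0+F$): reduce to maximizing $\hf+F$ on the rotation set, show the maximizers lie in its interior, write the first-order condition, and pass back and forth between nonlinear and linear equilibria via the duality of Proposition~\ref{prop:nlem}. Two steps are handled differently, both legitimately. For the key claim that maximizers avoid $\partial\rs(\vpot)$, the paper argues directly from the Legendre condition $\lvert\grad\hf\rvert\to\infty$ at the boundary, with a supporting-hyperplane estimate ruling out $\grad\hf$ becoming orthogonal to the inward direction; you instead deduce from the surjectivity of $\grad\Pf$ onto $\inter\rs(\vpot)$ (plus differentiability of $\Pf$) that $\hf$ has empty superdifferential at boundary points, and invoke the standard convex-analysis fact that this forces an infinite one-sided directional derivative along some admissible direction --- an equivalent route resting on the same Legendre-duality facts, a bit more abstract but shorter. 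For the identification of $\EM$ with the linear equilibria of the potentials $y\cdot\vpot$, $y\in\mathscr{Y}$, the paper phrases the argument through the unique measures $\nu_y$ of Proposition~\ref{prop:nlem}, whereas your tangent-functional argument (comparing the one-sided slopes of $t\mapsto\Pf(y^\ast+tu)$ with $\langle u,m(\vpot)\rangle$) handles arbitrary linear equilibrium measures, which fits the theorem's hypothesis ``$C^r$ Legendre'' without assuming uniqueness. One small fix: do not invoke Theorem~\ref{thm-var-prin-NL} for the identity $\sup_\mu \NLP^\En(T,\mu)=\sup_a\big(\hf(a)+F(a)\big)$ --- its hypothesis of abundance of ergodic measures is not assumed here, and the identity is in any case an elementary regrouping over the fibers $\mathcal M(a)$; in Section~\ref{sec:convexity} the equilibrium measures are defined directly as maximizers of $\mu\mapsto h(T,\mu)+F(\mu(\vpot))$, so no topological pressure is needed.
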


Observe that as a consequence, even though nonlinear equilibrium measures may fail to be unique, under the hypotheses of Theorem \ref{thm-equilibria} they are ergodic as soon as linear equilibrium measures are (and more, see Corollary \ref{cor-individual}).

\begin{addendum}\label{add-equilibria}
In the  above setting, the set $\mathscr{Y}$ can be computed from the linear pressure function defined over $\RR^d$ by
  $
      \Pf(y_1,\dots,y_d) = P_\top(T,\sum_i y_i\pot_i).
  $
More precisely $ \mathscr{Y}=(\nabla \Pf)^{-1}(\mathscr{V})$ where $\nabla \Pf$ is the gradient of $\Pf$ and
\begin{align*}
  \mathscr{V} &=\{z\in\RR^d \colon \hf(z)+F(z)=\sup(\hf + F) \} \\
  \hf(z) &=\sup\{h(T,\mu):\mu(\vec\pot)=z\}.
\end{align*}
The function $\hf$ can also be computed from $\Pf$, as $-\hf$ is the Legendre dual of $P$.
\end{addendum}

\begin{remarks}
Given $(T,\vpot)$ a smooth Legendre system, any compact subset of $\RR^d$ can be realized as the set $\mathscr{Y}$ above by choosing a suitable  $C^\infty$ smooth nonlinearity $F$ {\JB (Corollary~\ref{cor-flex-Y})}. 

Our proof will apply to a more general notion of equilibrium measures, see eq.~\eqref{eq-G}.
\end{remarks}
\bigbreak

\begin{maintheorem}\label{thm-finiteness}
If  $(T,\pot)$ is a $C^\omega$ Legendre system with unique linear equilibrium measures and $F$ is $C^\omega$ with a single potential $(d=1)$, then there are only finitely many nonlinear equilibrium measures.
\end{maintheorem}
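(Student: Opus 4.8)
The strategy is to use the Addendum to Theorem~\ref{thm-equilibria} to reduce the problem to a statement about analytic functions in one variable. By that Addendum, the nonlinear equilibrium measures correspond to the set $\mathscr{Y} = (\Pf')^{-1}(\mathscr{V})$ where $\mathscr{V} = \{z : \hf(z) + F(z) = \sup(\hf + F)\}$ and $\hf$ is (up to sign) the Legendre dual of the linear pressure function $\Pf(y) = P_\top(T, y\pot)$. Since $(T,\pot)$ has unique linear equilibrium measures, distinct points of $\mathscr{Y}$ give distinct linear equilibrium measures; so it suffices to show $\mathscr{Y}$ is finite, and for that it suffices to show $\mathscr{V}$ is finite, because $\Pf'$ is a function (each $z \in \mathscr{V}$ has at most one preimage in $\mathscr{Y}$ — in fact I should check injectivity of $\Pf'$ follows from the $C^\omega$ Legendre hypothesis with unique linear equilibrium measures, since strict convexity of $\Pf$ is what the Legendre duality with a single-valued inverse amounts to).

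First I would record the regularity of $\hf$. In the $C^\omega$ Legendre setting with a single potential, the pressure function $\Pf(y) = P_\top(T, y\pot)$ is real-analytic and strictly convex on an open interval (this should be part of, or immediate from, the definition of $C^\omega$ Legendre system in Definitions~\ref{defi-Legendre}, \ref{defi-regular}); hence its Legendre transform, and therefore $\hf$, is real-analytic and strictly concave on the open interval $J = \Pf'(\dom \Pf)$ of attainable values $z = \mu(\pot)$. Consequently $g := \hf + F$ is real-analytic on an open interval.

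Next, the key step: $\mathscr{V} = \{z : g(z) = \max g\}$ is the set of global maximizers of a real-analytic function on an interval. A nonconstant real-analytic function on an interval has isolated zeros of any of its derivatives, so in particular $g'$ has isolated zeros unless $g$ is constant; thus $\mathscr{V}$, being contained in the zero set of $g'$, is either finite or all of $J$. The degenerate case $g \equiv \text{const}$ on $J$ would force $\hf + F$ constant, which is compatible with analyticity; I would handle it by noting that then $\mathscr{V} = \overline{J}$ is a compact interval, and we must look one level down. Here is where the real content lies: even if $g$ is constant, $\mathscr{Y} = (\Pf')^{-1}(\mathscr{V})$ need not be finite a priori — but wait, if $g$ is constant on all of $J$ then every attainable $z$ is a maximizer, so $\mathscr{Y}$ is the whole domain and there are infinitely many equilibrium measures, contradicting nothing yet. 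So I need to rule this out, or rather: I expect the precise definition of $C^\omega$ Legendre system to guarantee strict concavity of $\hf$ (equivalently strict convexity of $\Pf$, equivalently the absence of an interval of $z$-values all realized by the same entropy-maximizing behavior), which makes $\hf'' < 0$, so $g$ is strictly concave, hence has a \emph{unique} maximizer and $\mathscr{V}$ is a single point. In that case $\mathscr{Y}$ is a single point and there is a unique nonlinear equilibrium measure — which is stronger than claimed and would be false (phase transitions occur!).

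So I must be more careful: $F$ is only assumed $C^\omega$, not concave, so $g = \hf + F$ is real-analytic but \emph{not} necessarily concave; it can have several local maxima, but being nonconstant analytic on an interval, its set of global maxima $\mathscr{V}$ is finite. The only way $\mathscr{V}$ is infinite is $g \equiv \text{const}$, i.e. $F = c - \hf$ on $J$; since $\hf$ is strictly concave (from the Legendre hypothesis) and real-analytic, $c - \hf$ is a specific strictly convex real-analytic function, and if one insists this is excluded — or simply allows it as the single exceptional case giving infinitely many equilibria — the theorem as stated ("finitely many") needs $g \not\equiv \text{const}$. I would therefore either (a) observe that the hypotheses implicitly exclude this (e.g. if $F$ is required to have open domain strictly containing $J$, or if "$C^\omega$ system with unique linear equilibrium measures" is phrased to make $\hf$ not real-analytically extendable past $\overline J$ in a way compatible with $F$), or (b) state the dichotomy cleanly: either $\mathscr{V}$ is finite, hence $\mathscr{Y}$ finite, hence finitely many equilibrium measures; or $F \equiv c - \hf$, the degenerate case. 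The honest main obstacle is thus pinning down exactly what the $C^\omega$ Legendre hypothesis gives about $\hf$ and confirming the degenerate case is excluded or negligible; once $g$ is a nonconstant real-analytic function on an interval, finiteness of its global-maximum set is the standard isolated-zeros argument, and the reduction via the Addendum plus injectivity of $\Pf'$ (from strict convexity of $\Pf$) finishes the count.
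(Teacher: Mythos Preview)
Your approach is essentially the paper's: reduce via the Addendum to showing that $\mathscr{V}$, the set of maximizers of $g:=\hf+F$, is finite, then use the real-analytic isolated-zeros argument on $g'$. The paper packages this as ``$\mathscr{V}$ is a compact subset of an analytic subvariety of $(A,B)$, and a proper analytic subvariety of a compact interval is finite.''

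The gap in your write-up is exactly the point you flag as the ``honest main obstacle'': ruling out $g\equiv\text{const}$. This is not delicate once you look at the right piece of the definition. Condition~(iii) in Definition~\ref{defi-Legendre} (concave of Legendre type) says $|\hf'(z)|\to\infty$ as $z$ approaches either endpoint of $\rs(\pot)=[A,B]$. On the other hand, the standing convention for a $C^\omega$ energy is that $F$ is real-analytic on an \emph{open} set containing the compact interval $[A,B]$, so $F'$ is bounded on $[A,B]$. Hence $g'(z)=\hf'(z)+F'(z)\to+\infty$ as $z\to A^+$ (and $\to-\infty$ as $z\to B^-$), so $g'$ is certainly not identically zero on $(A,B)$. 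This simultaneously handles the second point you need but do not state: the same divergence forces $g$ to be strictly increasing near $A$ and strictly decreasing near $B$, so no maximizer of $g$ lies on the boundary; thus $\mathscr{V}$ is a compact subset of the \emph{open} interval $(A,B)$, and its intersection with the discrete zero set of $g'$ is finite. (This is the content of the Claim inside the proof of Theorem~\ref{thm:equilibria-G}.)

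Finally, your remark about needing uniqueness of linear equilibrium measures is correct and used exactly as you say: finiteness of $\mathscr{V}$ (hence of $\mathscr{Y}=-\hf'(\mathscr{V})$) gives finitely many $y$'s, and the uniqueness hypothesis turns each $y$ into a single measure $\nu_y$.
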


Note that we do not simply claim that $\EM$ is finite-dimensional, but that it is finite, even though it can contain several equilibrium measures. In fact, this failure of uniqueness can occur even for a topologically transitive subshift of finite type with a H\"older-continuous potential (see \eg \cite{Leplaideur-Watbled} and Section \ref{sec-examples} below).
However uniqueness holds for generic non-linearities for any $d\ge1$  (Proposition \ref{p-generic-unique}).

\bigbreak
The above characterization shows that for many systems with expanding or hyperbolic properties, such as mixing subshifts of finite type, the nonlinear equilibrium measures share the good ergodic properties of the classical equilibrium measures. Let us recall some of them.

\begin{corollary}[Folklore]\label{cor-individual}
Let $(X,T)$ be a mixing subshift of finite type (not reduced to a fixed point).
Consider H\"older-continuous potentials $\vec\pot:X\to\mathbb{R}^d$ and a $C^r$ nonlinearity $F:U\subset\mathbb{R}^d\to\mathbb{R}$. Then, for the energy given by $\En(\mu) = F\big(\mu(\pot_1),\dots,\mu(\pot_d)\big)$, any nonlinear equilibrium measure 
 \begin{itemize}
   \item is ergodic and mixing;
   \item has exponential decay of correlation;
   \item satisfies the almost sure invariance principle and in particular the central limit theorem.
 \end{itemize}
where the two last properties are understood to hold with respect to H\"older-con\-ti\-nuous observables.
\end{corollary}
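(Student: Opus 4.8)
The plan is to use Theorem~\ref{thm-equilibria} to identify every nonlinear equilibrium measure with a classical equilibrium measure of a H\"older-continuous potential, and then to quote the well-documented ergodic and statistical properties of Gibbs measures on mixing subshifts of finite type.

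First I would check that the relevant hypotheses are in force. Since $(X,T)$ is a subshift, $\mu\mapsto h(T,\mu)$ is upper semicontinuous and the ergodic invariant measures are dense in $\Prob(T)$ both in the weak-star topology and in entropy; hence $(T,\En)$ has an abundance of ergodic measures, so Theorems~\ref{thm-var-prin-NL} and~\ref{thm-existence-NL} apply and ``nonlinear equilibrium measure'' is a meaningful and non-vacuous notion. Next, a mixing subshift of finite type together with H\"older-continuous potentials $\vec\pot$ is $C^\omega$, hence $C^r$, Legendre: the linear pressure function $y\mapsto \Pf(y)=P_\top(T,\sum_i y_i\pot_i)$ is convex on $\RR^d$ (always) and real-analytic (Ruelle's analyticity theorem), which provides the structure required by Definitions~\ref{defi-Legendre}--\ref{defi-regular}. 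If the $\pot_i$ satisfy a cohomological relation --- so that some nontrivial combination is cohomologous to a constant and $\Pf$ fails to be strictly convex --- I would first replace $\vec\pot$ by a subfamily that is linearly independent modulo coboundaries and rewrite $F$ in the corresponding affine coordinates; this is an affine reparametrization that leaves $F$ of class $C^r$, and changes neither $\En$ nor the set of nonlinear equilibrium measures, since a classical equilibrium measure depends only on the cohomology class of its potential. With this reduction, Theorem~\ref{thm-equilibria} produces a nonempty compact $\mathscr{Y}\subset\RR^d$ such that every nonlinear equilibrium measure $\mu$ is the classical equilibrium measure of the H\"older potential $\psi_y:=\sum_i y_i\pot_i$ for some $y\in\mathscr{Y}$.

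It then remains to recall classical facts. On a mixing subshift of finite type not reduced to a point, a H\"older-continuous potential $\psi$ has a unique equilibrium measure, which coincides with its Gibbs measure $\mu_\psi$ constructed from the Ruelle--Perron--Frobenius transfer operator (\cite{Bowen-book,Ruelle-book,Walters-book}). This measure is fully supported and exact, hence mixing, and is in fact measure-theoretically isomorphic to a Bernoulli shift (Bowen), which yields the first bullet. The spectral gap of the normalized transfer operator acting on the space of H\"older functions gives exponential decay of correlations for H\"older observables (Ruelle), which yields the second bullet. Finally, the same transfer-operator picture together with Gordin's martingale approximation and the Philipp--Stout (or Denker--Philipp) method yields the almost sure invariance principle, and a fortiori the central limit theorem, for H\"older observables, which yields the third bullet. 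Applying these three facts to $\psi=\psi_y$ for the relevant $y\in\mathscr{Y}$ completes the proof.

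I do not expect a genuine obstacle: the corollary is explicitly folklore, and the argument is an assembly of standard references once Theorem~\ref{thm-equilibria} has reduced nonlinear equilibrium measures to linear ones. The only point deserving a little care is checking that the mixing-subshift-with-H\"older-potentials setting satisfies the $C^r$ Legendre hypothesis of Theorem~\ref{thm-equilibria}, which is handled by the harmless normalization removing any cohomological degeneracies among the $\pot_i$.
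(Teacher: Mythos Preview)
Your proposal is correct and follows the same route as the paper: invoke Theorem~\ref{thm-equilibria} to identify each nonlinear equilibrium measure with the classical equilibrium measure of a H\"older potential $\sum_i y_i\pot_i$, then quote the standard Ruelle--Perron--Frobenius theory and the almost sure invariance principle for Gibbs measures on mixing SFTs. The paper does not spell out a proof beyond citing \cite{Baladi-book} and \cite{MN05}; your extra care in reducing to cohomologically independent potentials (as in Section~\ref{subsec:potts}) before applying Theorem~\ref{thm-equilibria} is exactly the right way to handle the Legendre hypothesis in full generality.
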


These results are folklore in the sense that some of them are immediate consequences of the founding results of Sinai, Ruelle, and Bowen, while others were first considered in more general settings. The following are convenient references:  ergodicity, mixing, and exponential decay of correlation follow from Ruelle's Perron-Frobenius theorem (see, e.g.,\cite[chapter 1]{Baladi-book}), %
the almost sure invariance principle, which implies many limit theorems was proved in \cite{MN05} in much greater generality.

\subsection{Examples}\label{subsec-examples}

We will give a few examples to which the above theorems apply,
mostly inspired by physics. These examples involves an additional real parameter, the inverse temperature $\beta>0$: the energy function is then $\En(\mu)=\beta\En_1(\mu)$ where $\En_1$ is a reference energy and $\beta$ tunes the balance between entropy and that energy, in agreement with thermodynamics.\footnote{In thermodynamics, the equilibrium state of a system in contact with a thermostat at inverse temperature $\beta$ is such that it maximizes  the entropy of the total system (combining the initial system and the thermostat), i.e., the quantity $h(T,\mu)-\beta\En(\mu)$, up to the addition of a constant. As is customary in dynamics, the minus sign has been included in the definition of the energy function.} This leads to the natural question of how the existence, the number, or the equilibrium measures themselves depend on this parameter $\beta$, leading to the physical notion of phase transitions.

\subsubsection{Linear case}

The classical, linear formalism is the special case where $d=1$ and $F(z)= \beta z$ for $\beta>0$ 
and taking any $\pot\in C(X,\RR)$. The nonlinear pressure then coincides with the linear one: $h(T,\mu)+\beta\int \pot \dd\mu$, yielding a first example. Here $\mathscr{Y}=\{\beta\}$.

\subsubsection{Classical Curie-Weiss model}

Consider $X=\{-1,1\}^{\mathbb{N}}$ of $X=\{-1,1\}^{\mathbb{Z}}$, let $T$ be the shift map, and set $\pot(x_0x_1\cdots)=x_0$ and $F(z)=\frac\beta2z^2$ for some $\beta\ge 0$; i.e., maximize $h(T,\mu)+\frac\beta2\mu(\pot)^2$.  The set $\mathscr{Y}$ can have one or two elements depending on the value of $\beta$: see \cite{Leplaideur-Watbled} and Section \ref{sec-Curie-Weiss}. { The notations were slightly different:
 $\NLP^\En_\top(T)$ here  was $\CP_{2}(\pot)$ in \cite{Leplaideur-Watbled}, $\hf(z)$ was $\ol{H}(z)$ and $\hf(z)+F(z)$ was $\ol\varphi(z)$.}

\subsubsection{Asymmetric Curie-Weiss model}

In Section \ref{sec-metastable} we shall give an asymmetric Curie-Weiss model, where $T$ is again a full shift map, $\pot$ is a Bernoulli potential and $F(z)=\frac\beta2z^2$, but exhibiting a \emph{metastable} phase transition: at each temperature there are finitely many \emph{local} maximizers, but at some critical temperature the global maximizer jumps from one local maximizer to another.

\subsubsection{Curie-Weiss-Potts}

{\JB The consideration of several potentials is motivated by} 
the Curie-Weiss-Potts model: $X=\{\theta_1,\dots, \theta_n\}^{\mathbb{N}}$ or $X=\{\theta_1,\dots, \theta_n\}^{\mathbb{Z}}$, $T$ the shift map, $\pot_i(x_0x_1\cdots)=\BBone_{\theta_i}(x_0)$ and 
$F(z)=\frac{\beta}{2} \lVert z\rVert^2$ where $\lVert\cdot\rVert$ is the usual Euclidean norm, exhibiting yet another form of phase transition as $\beta$ varies, see \cite{Leplaideur-Watbled-2} and Section \ref{subsec:potts}.

\subsubsection{Wassertein distance to the maximal entropy measure}\label{sec-Wasserstein}

We can go beyond the case with potentials: let us give a simple but intriguing example. Consider the map $T:x\mapsto 2 x \mod 1 $ on the circle, with reference energy $\En_{1}(\mu)=\wass_p(\mu,\lambda)$ where $\lambda$ denotes the Lebesgue measure, $p\in[1,+\infty)$ and $\wass_p$ is the \emph{Wasserstein distance} of exponent $p$.

Theorems \ref{thm-var-prin-NL} and \ref{thm-existence-NL} ensure that the nonlinear topological pressure is achieved by at least one invariant measure. The main question, which we leave open, is then to describe the non-empty set of equilibrium measures for $\beta\En_1$, in particular determine uniqueness. 

For $\beta=0$, $h(T,\mu)+\beta W_p(\mu,\lambda)$ reduces to the entropy so $\lambda$ is the unique equilibrium. When $\beta\to\infty$, the set of equilibrium measures must converge to $\{\delta_0\}$, since $\delta_0$ is the unique invariant measure maximizing $W_p(\mu,\lambda)$. 

\subsection{More Phase Transitions}\label{subsec-comments}
A phase transition can be defined from any of a number of different phenomena that often {\JB occur simultaneously}: loss of the analyticity of the pressure with respect to physical parameters, multiple equilibrium measures, or failure of the central limit theorem for example. 

Sarig \cite{Sarig-Phase-Transitions} has studied such equivalences
in the setting of Markov shifts. In contrast, we see here  (Section~\ref{sec-Curie-Weiss}) that non-analyticity of pressure and multiplicity of equilibrium measures can occur though the central limit theorem continues to hold (Corollary~\ref{cor-individual}). Such distinctions have been observed before in \cite{leplaideur-butterfly} and \cite{thaler}. 
The key point of view in the definition of Legendre regular systems and the proof of Theorems \ref{thm-equilibria} and \ref{thm-finiteness} is to consider a certain convex set, the \emph{entropy-potential diagram} (defined in Section \ref{sec:convexity}, see figures \ref{fig:diagramme2D}, \ref{fig:diagramme1Da}), which describes the pairs $(h(T,\mu);\mu(\vec\pot))$ that can be achieved when $\mu$ runs over $\Prob(T)$. Phase transitions then occur when the nonlinearity ``becomes more convex'' than the diagram.

In Section \ref{sec:transition}, we shall illustrate more broadly the benefits of this diagram by considering \emph{freezing phase transitions}, by which we mean that for  all $\beta>\beta_0$ for some $\beta_0>0$, the set of equilibrium measures is non-empty and independent of $\beta$; its elements are called ``ground states'' as they must maximize the energy.
\begin{maintheorem}\label{thm-fpt}
Let $T:X\to X$ be a continuous dynamical system of finite, positive topological entropy, and assume that $\mu\mapsto h(T,\mu)$ is upper semi-continuous.
\begin{enumerate}
\item For every $\mu_0\in\Prob_\erg(T)$ with zero entropy there exist a continuous potential $\pot:X\to \mathbb{R}$ such that the \emph{linear} thermodynamical formalism of $(T,\pot)$ exhibits a freezing phase transition with unique ground state $\mu_0$.
\item For every continuous potential $\pot:X\to(-\infty,0]$ such that $K=\pot^{-1}(0)$ is $T$-invariant and has zero topological entropy, there exist a continuous nonlinearity $F : (-\infty,0]\to (-\infty,0]$ with $F(0)=0$ such that the energy $\En(\mu) = F(\mu(\pot))$ exhibits a freezing phase transition with ground states supported on $K$.
\end{enumerate}
\end{maintheorem}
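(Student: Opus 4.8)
\emph{Strategy.} The two items are independent; item (2) is a direct construction, so I sketch it essentially in full, and then reduce item (1) to the same kind of computation.

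\medskip
\noindent\emph{Item (2).} Put $J=\{\mu(\pot):\mu\in\Prob(T)\}$; this is an interval whose right endpoint is $0$ because $\pot\le 0$ vanishes exactly on the non-empty set $K$. Let $\hf(t)=\sup\{h(T,\mu):\mu\in\Prob(T),\ \mu(\pot)=t\}$ for $t\in J$ (this is the upper boundary of the entropy--potential diagram, see Section~\ref{sec:convexity}). Since entropy is affine on $\Prob(T)$, that diagram is convex, so $\hf$ is concave; since $h$ is upper semicontinuous and $\{\mu:\mu(\pot)=t\}$ is compact, the supremum is attained and $\hf$ is continuous on $J\setminus\{\min J\}$, with $\hf(0)=h_{\top}(T|_K)=0$. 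Fix $t^\ast<0$ at which $\hf$ attains its maximum over $J$ (any small $t^\ast<0$ if $\hf$ is non-increasing), constants $\delta_0\in(0,-t^\ast]$ and $\lambda>0$, and set $F(t)=-2\hf(t)+\lambda t$ for $t\in[-\delta_0,0]$ and $F(t)=F(-\delta_0)$ for $t\le-\delta_0$. Then $F:(-\infty,0]\to(-\infty,0]$ is continuous, $F(0)=0$, $F\le 0$, and $F$ is convex: on $[-\delta_0,0]$ it is the sum of the convex function $-2\hf$ and a linear one, on $(-\infty,-\delta_0]$ it is constant, and at $-\delta_0$ the slopes match convexly because $\hf'(-\delta_0^+)\le 0$ gives $F'(-\delta_0^+)\ge\lambda>0$. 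Consequently $\En(\mu)=F(\mu(\pot))$ and each $\beta\En$ are convex, so $(T,\beta\En)$ has an abundance of ergodic measures; by Theorems~\ref{thm-var-prin-NL}--\ref{thm-existence-NL} (using that $h$ is upper semicontinuous) the nonlinear equilibrium measures of $\beta\En$ exist and are exactly the maximizers of $\mu\mapsto h(T,\mu)+\beta F(\mu(\pot))$.

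\medskip
\noindent For $\mu$ with $t:=\mu(\pot)\in[-\delta_0,0]$ one has
\[
  h(T,\mu)+\beta F(t)\ \le\ \hf(t)+\beta\bigl(-2\hf(t)+\lambda t\bigr)\ =\ (1-2\beta)\hf(t)+\beta\lambda t ,
\]
which for $\beta>\tfrac12$ is $\le\beta\lambda t\le 0$, with equality only if $t=0$ and $h(T,\mu)=\hf(0)=0$; for $t<-\delta_0$ one has $h(T,\mu)+\beta F(t)\le h_{\top}(T)+\beta F(-\delta_0)$, which is $<0$ as soon as $\beta>h_{\top}(T)/|F(-\delta_0)|$. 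Hence there is $\beta_0>0$ so that for every $\beta>\beta_0$ the set of nonlinear equilibrium measures of $\beta\En$ equals $\{\mu\in\Prob(T):\mu(\pot)=0\}=\{\mu:\supp\mu\subseteq K\}$; this set is non-empty (Krylov--Bogolyubov on $K$) and independent of $\beta$, so $\En$ exhibits a freezing phase transition whose ground states are all supported on $K$. (It is a genuine transition: for $\beta$ close to $0$ the maximizer is the measure of maximal entropy of $T$, not carried by $K$ since $h_{\top}(T)>0$.)

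\medskip
\noindent\emph{Item (1).} Now the nonlinearity is frozen to $F(z)=\beta z$ and the freedom lies in the potential. It suffices to build $\pot\in C(X,\RR)$ such that (a) $\mu_0$ is the \emph{unique} measure maximizing $\mu\mapsto\mu(\pot)$ over $\Prob(T)$, and (b) $\sup\{h(T,\mu)/(\mu_0(\pot)-\mu(\pot)):\mu\in\Prob(T)\setminus\{\mu_0\}\}<\infty$ — equivalently, the entropy--potential diagram of $(T,\pot)$ has finite slope at its rightmost vertex, which by (a) and $h(\mu_0)=0$ is the point $(0,\mu_0(\pot))$. Given such a $\pot$, the computation of item (2) applies verbatim with $\hf$ the upper boundary of that diagram: for $\beta$ larger than the finite slope in (b), the unique maximizer of $\mu\mapsto h(T,\mu)+\beta\,\mu(\pot)$ is $\mu_0$, which is precisely a freezing phase transition with unique ground state $\mu_0$. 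Observe that (a) alone already forces the nested compacta $A_\eps=\{\mu:\mu(\pot)\ge\mu_0(\pot)-\eps\}$, whose intersection is $\{\mu_0\}$, to have diameters tending to $0$; together with upper semicontinuity of $h$ at $\mu_0$ (where $h(\mu_0)=0$) this makes $\hf$ continuous, with value $0$, at the vertex, and (b) is its quantitative, linear strengthening.

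\medskip
\noindent The construction of $\pot$ realizing (a) and (b) is the main obstacle. When $\supp\mu_0$ is uniquely ergodic it is immediate: take $\pot=-\psi\circ d(\cdot,\supp\mu_0)$ for a continuous increasing $\psi$ with $\psi(0)=0$; then $\supp\mu_0$ is the only closed invariant subset of $\pot^{-1}(0)$ and carries only $\mu_0$, which gives (a), while choosing $\psi$ to decay \emph{slowly} enough near $0$ — at a rate dictated by the modulus of upper semicontinuity of $h$ at $\mu_0$, which exists because $h(\mu_0)=0$ — gives (b). In general one must replace $d(\cdot,\supp\mu_0)$ by a potential assembled from the empirical measures: fixing a $\mu_0$-generic point $x_0$ (it exists since $\mu_0$ is ergodic) and a metric $\mathbf d$ metrizing the weak-$\ast$ topology on $\Prob(X)$, one takes $\pot$ to be a slowly weighted combination of the continuous maps $x\mapsto-\mathbf d(\Delta_x^n,\mu_0)$, arranged so that the only invariant measure attaining $\max_{\Prob(T)}\mu(\pot)$ is $\mu_0$ — this is exactly where $h(\mu_0)=0$ is used, to prevent positive-entropy measures from tying for the maximum — and tuning the weights to secure the linear bound (b). Making (a) rigorous, with a modulus compatible with (b), is the delicate point; once $\pot$ is in hand the freezing phase transition follows formally as above.
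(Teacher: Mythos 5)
Your item (2) is correct and runs along the same lines as the paper's Theorem~\ref{thm-fpt-nonlinear}: one builds the nonlinearity directly from the entropy function $\hf(z)=\sup\{h(T,\mu):\mu(\pot)=z\}$ so that, for large $\beta$, the graph of $-\beta F$ dominates the graph of $\hf$ with contact only at $z=0$. Your explicit choice $F=-2\hf+\lambda z$ capped at $-\delta_0$ (convex, hence giving abundance of ergodic measures so Theorems~\ref{thm-var-prin-NL} and \ref{thm-existence-NL} apply) is a legitimate variant of the paper's ``any increasing convex $F_1$ with $\hf(z)=o(-F_1(z))$'', and your verification that for $\beta>\beta_0$ the equilibrium set is exactly $\{\mu\in\Prob(T):\supp\mu\subset K\}$ is sound.

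Item (1), however, has a genuine gap: you never produce the potential. You correctly reduce the statement to finding $\pot$ with (a) $\mu_0$ the unique maximizer of $\mu\mapsto\mu(\pot)$ and (b) a uniform linear bound $h(T,\mu)\le C(\mu_0(\pot)-\mu(\pot))$ (this is essentially item \textit{(iv)} of Proposition~\ref{prop-fpt}), but then you acknowledge that constructing such a $\pot$ is ``the main obstacle'' and offer only a heuristic. The heuristic does not close the gap. For (a), when $\supp\mu_0$ carries other invariant measures, no potential of the form $-\psi\circ d(\cdot,\supp\mu_0)$ or a weighted combination of $x\mapsto-\mathbf{d}(\Delta_x^n,\mu_0)$ is shown to single out $\mu_0$: note that $\mu_0(\pot)<0$ for such $\pot$ (finite-time empirical measures of $\mu_0$-generic points are not equal to $\mu_0$), so another invariant measure whose empirical measures approach $\mu_0$ faster on average could tie or win; making $\mu_0$ the \emph{unique} maximizer of a continuous potential for an arbitrary ergodic $\mu_0$ is precisely the content of Jenkinson's theorem \cite{Jenkinson}, which is proved by a separation (Hahn--Banach) argument, not by such explicit formulas. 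For (b), upper semicontinuity of $h$ at $\mu_0$ only yields a qualitative modulus; the required inequality is a \emph{linear} bound uniform over all of $\Prob(T)$, including positive-entropy measures that place most of their mass arbitrarily close to $\supp\mu_0$, and ``choosing $\psi$ to decay slowly enough'' is exactly the delicate Hofbauer/Bruin--Leplaideur-type estimate that is not carried out (and that the paper deliberately bypasses, noting its own proof is non-constructive). The paper's route avoids both difficulties at once: Jenkinson's theorem gives $\tilde\pot$ with $\mu_0$ the unique equilibrium state at $\beta=1$; since $h(T,\mu_0)=0$ this immediately yields $h(T,\mu)\le\mu_0(\tilde\pot)-\mu(\tilde\pot)$ for all invariant $\mu$, i.e.\ condition (b) with $C=1$ for $\pot=\tilde\pot-\mu_0(\tilde\pot)$, and Proposition~\ref{prop-fpt} plus uniqueness at $\beta=1$ then give the freezing transition with unique ground state $\mu_0$. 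To complete your argument you would either have to invoke that theorem (at which point your conditions (a)--(b) follow at once) or supply a genuinely new construction, which your sketch does not.
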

The first item is not directly related to the \emph{non-linear} thermodynamical formalism, but its analysis is a simple application of the tools developed here (more precisely, we rely on the entropy-potential diagram introduced in Section \ref{sec:convexity} which is central to our non-linear study).


\subsection{Questions}\label{subsec-questions}
We close this introduction with a few {\JB more open} questions. 
\begin{itemize}
 \item Without assuming abundance of ergodicity, does a variational principle hold in \emph{restriction to ergodic} measures, that is:
   $$
        \sup_{\mu\in\Proberg(T)} \Pi^F(T,\pot,\mu) = \Pi^F_\top(T,\pot)?
    $$
(See Remark~\ref{rem-vp-erg}.)
 \item Can one find a subshift of finite type, H\"older-continuous potentials and a real-analytic nonlinearity\footnote{Recall that we ask that {\JB real-analytic} $F$ be defined on an \emph{open} set containing the compact set of all possible values of $(\mu(\pot_1),\dots,\mu(\pot_d))$. This in particular prevents the trivial choice $F(\vec z) = -\sup\{h(T,\mu) \colon (\mu(\pot_1),\dots,\mu(\pot_d))=\vec z \}$.} such that there exist infinitely many nonlinear equilibrium measures? What if we additionally impose the quadratic nonlinearity, i.e., $F(z) = \frac12\lVert z\rVert^2$?
\item
Can one find a ``natural'' energy {\JB (necessarily not an energy with potentials) on  some subshift of finite type} such that the non-linear equilibrium measure is unique but not ergodic?
\item {\JB For the doubling map and the Wasserstein energy $W_p(\cdot,\lambda)$ from Section~\ref{sec-Wasserstein},} is there a finite $\beta>0$ at which $\delta_0$ is an equilibrium measure? Is $\lambda$ still an equilibrium measure for some $\beta>0$? What happens just after $\lambda$ ceases to be an equilibrium?

\end{itemize}

\section{Variational principle}
\newcommand\hQ{{\widehat Q}}

In this section we prove Theorem~\ref{thm-var-prin-NL}. We first introduce some convenient notations. We fix a compact metric space $X$, a map $T:X\to X$ and an energy $\En$. In order to be as general as possible, we do not assume $T$ to be continuous for now, but only Borel-measurable.  Note that $X^n$ being compact, every subset is totally bounded; this ensures the finiteness of $(\eps,n)$-separated sets even when $T$ is not assumed to be continuous.
We often omit $T,\En$ from the notation, i.e., $\NLP_\top = \NLP_\top^\En(T)$, $\NLP(\mu) = \NLP^\En(T,\mu)$ etc.

Recall the definitions of the empirical measures of a point $x\in X$, of the  nonlinear weight of a subset $\mathcal C\subset X$, and of the partition function:
$$
    \Delta_x^n = \frac1N\sum_{k=0}^{n-1} \delta_{T^kx} \qquad
    \omega_n(\mathcal C) :=  \sum_{x\in\mathcal C} e^{n\En(\Delta_x^n)} \qquad
    \zeta(\eps,n) := \sup_{\mathclap{\substack{\mathcal{C} \\ (\eps,n)\text{-separated}}}}\ \omega_n(\mathcal C).
$$

We define for use in this section the following notation:
\begin{align*}
    \NLP_\top(\eps)&=\limsup_{n\to\infty} \frac1n \log \zeta(\eps,n) &
    \text{so that }\NLP_\top &= \lim_{\eps\to 0} \NLP_\top(\eps) \\
    \underline{\NLP}_\top(\eps) &= \liminf_{n\to\infty} \frac1n \log \zeta(\eps,n) &\underline{\NLP}_\top &= \lim_{\eps\to 0} \underline{\NLP}_\top(\eps).
\end{align*}


\subsection{Preliminaries}

We will use  the Wasserstein distance on the set $\Prob$ of probability measures on $X$. Proofs of the statements we need can be found in many places, e.g., \cite{Villani-book}. 

The distance between $\mu_1,\mu_2\in\Prob$ can be defined as
\[ \wass(\mu_1,\mu_2) = \sup \big\{\mu_1(f)-\mu_2(f) \colon f \text{ $1$-Lipschitz function } X\mapsto \mathbb{R} \big\}.\]
The ``Kantorovich duality'' states that this definition is equivalent to
\[ \wass(\mu_1,\mu_2) = \inf \Big\{\int d(x,y) \dd \pi(x,y) \colon \pi\in\Gamma(\mu_1,\mu_2) \Big\} \]
where $d$ is the distance on $X$ and $\Gamma(\mu_1,\mu_2)$ is the set of `transport plans'', i.e., Borel probability measures on $X\times X$ with marginals $\mu_1$ and $\mu_2$.
Moreover in these definitions both the supremum and the infimum are reached; a transport plan realizing the Wasserstein distance is said to be \emph{optimal}. The compactness of $X$ implies that the Wassertein distance induces the weak-star topology on $\Prob$, and that Wasserstein distance can be bounded above by total variation distance:
\[ \wass(\mu_1,\mu_2) \le \diam(X) \lVert \mu_1-\mu_2\rVert_{\mathrm{TV}} \]

We will also use the following reformulation of Birkhoff's ergodic theorem.
\begin{lemma}\label{l:Birkhoff}
Let $\mu\in\Prob(T)$ be ergodic. Then for $\mu$-almost all $x\in X$, we have $\Delta_x^n \to \mu$ in the weak-star topology.
\end{lemma}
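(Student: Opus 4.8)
The plan is the standard reduction of weak-star convergence to convergence of integrals against a countable dense family of observables, using the ergodic theorem applied potential-by-potential. Since $X$ is a compact metric space, the Banach space $C(X,\RR)$ is separable; fix a countable set $\{f_k\}_{k\in\NN}$ that is dense in $C(X,\RR)$ for the uniform norm.

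For each fixed $k$, Birkhoff's ergodic theorem applied to $f_k\in L^1(\mu)$ gives a $\mu$-conull set $X_k\subset X$ such that $\frac1n S_nf_k(x) = \Delta_x^n(f_k)$ converges for every $x\in X_k$; since $\mu$ is ergodic, the limit is the $\mu$-almost everywhere constant value $\int f_k\dd\mu = \mu(f_k)$. Set $X_0 := \bigcap_{k\in\NN} X_k$, which is still $\mu$-conull as a countable intersection of conull sets.

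Now fix $x\in X_0$. Then $\Delta_x^n(f_k)\to\mu(f_k)$ for every $k$. Let $f\in C(X,\RR)$ and $\eta>0$ be arbitrary, and choose $k$ with $\lVert f-f_k\rVert_\infty<\eta$. Since $\Delta_x^n$ and $\mu$ are probability measures,
\[
  \bigl|\Delta_x^n(f)-\mu(f)\bigr| \le \bigl|\Delta_x^n(f-f_k)\bigr| + \bigl|\Delta_x^n(f_k)-\mu(f_k)\bigr| + \bigl|\mu(f_k-f)\bigr| \le 2\eta + \bigl|\Delta_x^n(f_k)-\mu(f_k)\bigr|,
\]
so $\limsup_{n\to\infty}\bigl|\Delta_x^n(f)-\mu(f)\bigr|\le 2\eta$; letting $\eta\to0$ gives $\Delta_x^n(f)\to\mu(f)$. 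As this holds for every $f\in C(X,\RR)$, we conclude $\Delta_x^n\to\mu$ in the weak-star topology for every $x\in X_0$, which is the claim.

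There is no real obstacle here: the only points requiring a word of care are the separability of $C(X,\RR)$ (a consequence of $X$ being compact metric) and the uniform three-$\eta$ estimate above, which works precisely because all the measures involved have total mass $1$. Alternatively, one could phrase the last step by noting that the Wasserstein distance introduced above metrizes the weak-star topology and that the maps $\nu\mapsto\nu(f_k)$ separate points of $\Prob$, but the elementary argument given is self-contained.
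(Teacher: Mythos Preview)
Your proof is correct and follows essentially the same approach as the paper's own proof: pick a countable dense family in $C(X,\RR)$, apply Birkhoff's ergodic theorem to each function, intersect the resulting full-measure sets, and conclude by a three-term $\eta$-estimate. The only cosmetic difference is that you fix $x$ before choosing $k$ and pass to the $\limsup$, while the paper fixes $k$ and picks $N$; your phrasing is arguably slightly cleaner.
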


\begin{proof}
Let $(f_k)_{k\in\mathbb{N}}$ be a dense sequence of the space $C(X,\RR)$ of continuous functions $X\to\mathbb{R}$, endowed with the uniform norm. There exists a set $E$ {\JB with $\mu(E)=1$} such that for all $x\in E$ and all $k\in\mathbb{N}$, $\Delta_x^n(f_k) \to \mu(f_k)$ as $n\to\infty$.

Let $f\in C(X,\RR)$ and $\eps>0$. There exists $k\in\mathbb{N}$ such that $\lVert f-f_k\rVert_\infty \le \eps$, and $N\in\mathbb{N}$ such that for all $n\ge N$ and all $x\in E$, $\lvert \Delta_x^n(f_k)-\mu(f_k)\rvert\le\eps$. We then have $\lvert \Delta_x^n(f)-\mu(f)\rvert \le 3\eps$.
\end{proof}

\subsection{First part}\label{sec:th-A-1}

Theorem \ref{thm-var-prin-NL} starts with the equalities:
\begin{equation}\label{eq-def-Ptop1}
      \sup_{\mu\in\Prob(T)} \NLP(\mu) = \NLP_\top = \underline{\NLP}_\top.
\end{equation}
We will first prove
$$
     \sup_{\mu\in\Proberg(T)} \NLP(\mu) \underset{\tiny\In\label{In:1}}{\leq} \underline{\NLP}_\top \underset{\tiny\In\label{In:2}}{\leq} \NLP_\top \underset{\tiny\In\label{In:3}}{\leq}
      \sup_{\mu\in\Prob(T)} \NLP(\mu).
$$
Inequality $\Inref{In:1}$  is proved in Proposition \ref{prop:upper-ergodic}. Inequality  $\Inref{In:3}$ is proved in Proposition \ref{prop-lower-bound}. Inequality $\Inref{In:2}$ immediatlely follows from the definitions of $\NLP_\top$ and  $\underline{\NLP}_\top$. 

The missing inequality 
$$
   \sup_{\mu\in\Prob(T)} \NLP(\mu) \underset{\tiny\In\label{In:4}}{\leq}\sup_{\mu\in\Proberg(T)} \NLP(\mu)
$$ 
is proved in Proposition \ref{prop-ine probproberg} 
{\JB assuming an abundance of ergodic measures.}

\begin{remark}\label{rem-vp-erg}
If {\JB $(T,\En)$} is continuous but {\JB without} abundance of ergodicity, 
the following example shows that inequality 
$$\NLP_{\top} <\sup_{\mu\in\Prob(T)} \NLP(\mu)$$
may hold.

 Let $(X,T)$ be the union of two distinct fixed points $p,q$. Let $\En(\mu) = F(\mu(\pot))$ with $F(z)=-z^2$,  $\pot(p)=1$, $\pot(q)=-1$. Then $\NLP(\mu)=0$ for $\mu=\frac12(\delta_p+\delta_q)$ whereas $\NLP_\top=-1$. 
\end{remark}

\subsubsection{Bounding below the nonlinear topological pressure}
We prove Inequality $\Inref{In:1}$, then Inequality $\Inref{In:4}$ 
{\JB assuming an abundance of ergodic measures.}
Note that continuity of $T$ is not needed at that stage.

\begin{proposition}[Inequality \Inref{In:1}]\label{prop:upper-ergodic}
Recall that $X$ is a compact metric space. If $T:X\to X$ is Borel-measurable, then
for all \emph{ergodic} $\mu\in\Prob(T)$, we have $\NLP(\mu)\le \underline{\Pi}_\top$.
\end{proposition}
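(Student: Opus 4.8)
The plan is to fix an ergodic measure $\mu \in \Prob(T)$ and show $\NLP(\mu) = h(T,\mu) + \En(\mu) \le \underline{\NLP}_\top(\eps)$ for every $\eps > 0$, which yields the claim after letting $\eps \to 0$. The heart of the matter is to exhibit, for all large $n$, an $(\eps,n)$-separated set $\mathcal C$ whose nonlinear weight $\omega_n(\mathcal C)$ is at least $e^{n(h(T,\mu)+\En(\mu) - o(1))}$. The two ingredients are: (a) the Katok entropy formula / Shannon–McMillan–Breiman type lower bound, which provides, for any $\delta > 0$, an $(\eps,n)$-separated set of cardinality at least $e^{n(h(T,\mu)-\delta)}$ all of whose points lie in a set of large $\mu$-measure; and (b) Lemma~\ref{l:Birkhoff} together with continuity of $\En$, which guarantees that for $\mu$-typical $x$ the empirical measure $\Delta_x^n$ is $W$-close to $\mu$, hence $\En(\Delta_x^n) \ge \En(\mu) - \delta$ for $n$ large.

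Concretely, I would first use Lemma~\ref{l:Birkhoff}: the set $E_N = \{ x : W(\Delta_x^n,\mu) < \delta' \text{ for all } n \ge N \}$ has $\mu(E_N) \to 1$ as $N \to \infty$, so pick $N$ with $\mu(E_N) > 1/2$. By weak-star continuity of $\En$, shrink $\delta'$ so that $W(\nu,\mu) < \delta'$ implies $\En(\nu) > \En(\mu) - \delta$. Next, by the Katok entropy formula (valid for ergodic measures under only Borel-measurability of $T$, since it rests on Birkhoff's theorem and a counting argument with dynamical balls), for any set $A$ with $\mu(A) > 1/2$ and all large $n$, the maximal $(\eps,n)$-separated subset of $A$ has cardinality at least $e^{n(h(T,\mu)-\delta)}$. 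Apply this with $A = E_N$: for $n \ge N$ large enough we obtain an $(\eps,n)$-separated set $\mathcal C \subset E_N$ with $\#\mathcal C \ge e^{n(h(T,\mu)-\delta)}$, and every $x \in \mathcal C$ satisfies $\En(\Delta_x^n) > \En(\mu) - \delta$. Therefore
$$
\zeta(\eps,n) \ge \omega_n(\mathcal C) = \sum_{x \in \mathcal C} e^{n\En(\Delta_x^n)} \ge e^{n(h(T,\mu)-\delta)} \cdot e^{n(\En(\mu)-\delta)} = e^{n(\NLP(\mu) - 2\delta)}.
$$
Taking $\frac1n\log$ and $\liminf_{n\to\infty}$ gives $\underline{\NLP}_\top(\eps) \ge \NLP(\mu) - 2\delta$; since $\delta > 0$ is arbitrary, $\underline{\NLP}_\top(\eps) \ge \NLP(\mu)$, and letting $\eps \to 0$ finishes the proof.

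The main obstacle is the Katok-type lower bound on the number of $(\eps,n)$-separated points \emph{lying inside a prescribed positive-measure set}, in the generality of a merely Borel-measurable $T$ on a compact metric space. The standard statements (Katok's formula) are usually given for continuous maps, but the proof only uses Birkhoff's ergodic theorem, the definition of dynamical balls, and a pigeonhole argument counting how many disjoint $\frac\eps2$-dynamical balls fit while covering a set of measure $> 1/2$; none of these needs continuity. I would either cite a reference that states it in this generality or include a short self-contained argument: cover a measure-$>\tfrac12$ piece of $E_N$ by $(\eps,n)$-balls, use that each such ball has $\mu$-measure at most $e^{-n(h-\delta)}$ by Shannon–McMillan–Breiman (after passing to a generating partition, or directly via Katok's $(\eps,n)$-ball entropy characterization), and conclude that at least $e^{n(h-\delta)}$ balls — hence that many separated points — are needed. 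This counting step, and the care needed to keep the separated points inside $E_N$ so that the energy estimate applies simultaneously, is where the real work lies; the rest is bookkeeping.
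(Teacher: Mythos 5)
Your proposal is correct in substance, but it takes a genuinely different route from the paper. The paper never tries to place the $(\eps,n)$-separated points inside the set of typical points: it takes an arbitrary maximal $(\eps,n)$-separated set $\mathcal C$ of the whole space, extracts a \emph{minimal} subfamily $\mathcal C'$ whose $\delta$-dynamical balls cover $A\cap B$ (where $A$ is the Birkhoff-good set and $B$ the Brin--Katok-good set), bounds $\lvert\mathcal C'\rvert$ from below by applying the Brin--Katok formula to nearby typical points $y\in B$ via the inclusion $B(x,\delta,n)\subset B(y,2\delta,n)$, and then transfers the energy estimate from a typical $y\in A$ to the (possibly atypical) center $x$ through the explicit coupling bound $\wass(\Delta_x^n,\Delta_y^n)\le\delta$. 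You instead build the separated set directly inside the good set $E_N$ and invoke a Katok-type counting bound (``many $(\eps,n)$-separated points inside a prescribed set of measure $>1/2$''), which makes the energy estimate immediate but pushes the real work into exactly the lemma you flag: proving that each $(\eps,n)$-ball centered in the good set has measure at most $e^{-n(h(T,\mu)-\delta)}$, which is the nontrivial direction of Brin--Katok/Katok (your claim that it reduces to Birkhoff plus a pigeonhole is a little optimistic — the counting of $\alpha^n$-names met by a dynamical ball needs the boundary-frequency argument — but it does go through and, like the paper's use of Brin--Katok, does not require continuity of $T$). Two gains of the paper's route are that it needs no ``separated-set-inside-a-set'' lemma and that the same covering/coupling mechanism is reused later (Lemma~\ref{lem-dividing} and Section~\ref{sec:Gibbs}); your route is more direct once the Katok-type lemma is granted. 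One small fix: your opening claim ``$\NLP(\mu)\le\underline{\NLP}_\top(\eps)$ for every $\eps>0$'' is false for large $\eps$ (take $\eps>\operatorname{diam}X$); the Katok/Brin--Katok bound with exponent $h(T,\mu)-\delta$ only holds for $\eps$ small depending on $\delta$, so the correct quantifier order is: for every $\delta>0$ there is $\eps_1>0$ such that the estimate holds for all $\eps\le\eps_1$, and then let $\eps\to0$ — which is exactly how the paper (and, implicitly, the rest of your argument) proceeds.
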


\begin{proof}
Consider any $\gamma>0$. Since $\En$ is continuous and $\Prob$ is compact, $\En$ is uniformly continuous: there exists $\delta>0$ such that for all $\nu\in\Prob$, $\wass(\nu,\mu)\le 2\delta \implies \En(\nu)\ge \En(\mu)-\gamma$.

By Lemma \ref{l:Birkhoff}, there is a set $A\subset X$ with $\mu(A)\ge\frac34$ and $M_A\in\mathbb{N}$  such that for all $x\in A$ and all $n\ge M_A$ we have
$\wass(\Delta_x^n,\mu)\le \delta$.

By the Brin-Katok entropy formula \cite{Brin-Katok}, there exist $B\subset X$ with $\mu(B)\ge \frac34$ and $M_B\in\mathbb{N}$ such that for all $x\in B$ and all $n\ge M_B$ we have
 $$
    \Big\lvert \frac1n \log \mu(B(x,2\delta,n)) + h(T,\mu) \Big\rvert \le \gamma.
 $$

Consider any $n\ge\max(M_A,M_B)$ and any $0<\varepsilon\le\delta$. Let $\mathcal C$ be any $(\varepsilon,n)$-separated set of $X$ that is maximal with respect to inclusion; in particular, $\mathcal{C}$ is an $(\eps,n)$-cover, hence a $(\delta,n)$-cover. Let $\mathcal C'$ be a minimal subset of $\mathcal C$ that is an $(\delta,n)$-cover of $A\cap B$. 

On the one hand, for all $x\in \mathcal{C}'$ by minimality $B(x,\delta,n)$ intersects $B$; picking any $y$ in the intersection, we get $\mu(B(x,\delta,n)) \le \mu(B(y,2\delta,n)) \le e^{n(\gamma-h(T,\mu))}$. Since $\mu(A\cap B)\ge \frac12$, it follows 
 $$
    \lvert \mathcal{C}'\rvert \ge \frac12 e^{n(h(T,\mu)-\gamma)}.
 $$

On the other hand, for all $x\in \mathcal{C}'$ by minimality $B(x,\delta,n)$ intersects $A$; picking any $y$ in the intersection, we have $\wass(\Delta_y^n,\mu)\le \delta$ and $d(T^i x,T^iy)\le \delta$ for all $i\in \{0,\dots, n-1\}$. By considering the transport plan $\sum_i \frac1n \delta_{T^i x} \otimes \delta_{T^i y}$, we see that $\wass(\Delta_x^n,\Delta_y^n)\le \delta$. The triangular inequality then ensures $\wass(\Delta_x^n,\mu)\le 2\delta$, and we get
 $$
    \En(\Delta_x^n)\ge \En(\mu)-\gamma.
 $$

Using these two inequalities, we get
 $$
    \omega_n(\mathcal{C}) \ge \omega_n(\mathcal{C}') \ge \lvert \mathcal{C}'\rvert \min_{x\in\mathcal{C}'} e^{n\En(\Delta_x^n)} \ge \frac12 e^{n(h(T,\mu)-\gamma)} e^{n(\En(\mu)-\gamma)} \ge \frac12 e^{n(\NLP(\mu)-2\gamma)}.
 $$
Since $\mathcal C$ is $(\eps,n)$-separated, we get
 $$
    \frac1n \log\zeta(\eps,n) \ge \NLP(\mu)-2\gamma -\frac1n \log 2.
 $$
Taking the infimum limit as $n\to\infty$, we obtain that for all $\gamma>0$, there exist $\delta>0$ such that for all $\varepsilon\in(0,\delta)$:
 $$
    \underline{\NLP}_{\top}(\varepsilon)\ge \NLP(\mu)-2\gamma,
 $$
and letting $\eps$ then $\gamma$ go to zero ends the proof.
\end{proof}

Observe that we only used lower-semicontinuity for $\En$ here; but its upper-semicontinuity ensures it reaches its supremum, a desirable feature. This motivates the continuity requirement in the definition of an energy.


\begin{proposition}[Inequality \Inref{In:4}]\label{prop-ine probproberg}
If $T$ is Borel-measurable and
$(T,\En)$ has an abundance of ergodic measures, then 
$\disp \sup_{\mu\in\Prob(T)} \NLP(\mu) \le\sup_{\mu\in\Proberg(T)} \NLP(\mu)$.
\end{proposition}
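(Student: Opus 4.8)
The plan is to obtain this inequality immediately from the definition of an abundance of ergodic measures; no compactness or continuity of $T$ is needed. Fix an arbitrary $\mu\in\Prob(T)$ and an arbitrary $\gamma>0$. By hypothesis there exists an ergodic measure $\nu\in\Proberg(T)$ with $h(T,\nu)+\En(\nu)>h(T,\mu)+\En(\mu)-\gamma$, which is exactly $\NLP(\nu)>\NLP(\mu)-\gamma$. Consequently $\sup_{\nu\in\Proberg(T)}\NLP(\nu)\ge\NLP(\mu)-\gamma$. Since $\gamma>0$ was arbitrary, $\sup_{\nu\in\Proberg(T)}\NLP(\nu)\ge\NLP(\mu)$, and since $\mu\in\Prob(T)$ was arbitrary, taking the supremum over $\mu$ on the left-hand side gives $\sup_{\mu\in\Prob(T)}\NLP(\mu)\le\sup_{\nu\in\Proberg(T)}\NLP(\nu)$, as desired. (The reverse inequality is trivial since $\Proberg(T)\subset\Prob(T)$, so in fact one has equality, but only the stated direction is needed.)

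The entire content of the statement is carried by the hypothesis, so there is essentially no obstacle; the only minor point to check is that the argument remains correct without any finiteness assumption on entropy. If $h(T,\mu)+\En(\mu)=+\infty$ then the abundance inequality forces $\NLP(\nu)=+\infty$ as well, so the conclusion $\sup_{\Proberg(T)}\NLP\ge\NLP(\mu)$ still holds trivially; if it is finite the argument above is literal. This completes the proof, and together with Propositions~\ref{prop:upper-ergodic} and \ref{prop-lower-bound} it establishes the chain of inequalities $\Inref{In:1}$–$\Inref{In:4}$, hence the equalities \eqref{eq-def-Ptop1} of Theorem~\ref{thm-var-prin-NL}.
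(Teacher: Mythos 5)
Your argument is correct and is essentially the paper's own proof: both apply the definition of abundance of ergodic measures directly to an arbitrary $\mu\in\Prob(T)$ (the paper phrases it with a sequence $\nu_n$, you with an arbitrary $\gamma>0$, which is the same thing) and then pass to the supremum over ergodic measures. Your remark about the infinite-entropy case is a harmless bonus; nothing further is needed.
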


\begin{proof} Let $\mu$ be any invariant probability measure. 
%
{\JB Since} $(T,\En)$ has an abundance of ergodic measures, there is a sequence of measures $\nu_n\in\Proberg(T)$ such that {\JB $\lim_{\ninf} h(T,\nu_n)+\En(\nu_n) \ge h(T,\mu)+\En(\mu)$}; this yields that 
$$ h(T,\mu)+\En(\mu) \le \sup_{\nu\in\Proberg(T)} \NLP(\nu)$$
holds for every $\mu$ in $\Prob(T)$. 
\end{proof}

\subsubsection{Bounding from above the nonlinear topological pressure: Inequality $\Inref{In:3}$}

To end the proof of equality \eqref{eq-def-Ptop}, it remains to construct measures almost realizing the nonlinear topological pressure. We divide the proof into several lemmas that we shall reuse in Section \ref{sec:Gibbs}. We follow the strategy of Misiurewicz' proof of the linear variational principle, from which we extract the following result.
We recall that $H_{\mu}(\al)$ stands for the entropy for the measure $\mu\in \Prob(T)$  of the partition $\al$.

\begin{lemma}[Misiurewicz \cite{Misiurewicz-VP}]\label{lem-Misiurewicz}
Fix $\eps>0$ and let $(\mathcal{C}_k)_{k\in\mathbb{N}}$ be a sequence of $(\eps,n_k)$-separated sets where $n_k\to\infty$. Assume that for each $k$, $\sigma_k$ is a probability measure concentrated on $\mathcal{C}_k$ and that 
\[\mu_k = \frac{1}{n_k}\sum_{\ell=0}^{n_k-1} T_*^\ell \sigma_k\]
converges in the weak star topology to some measure $\mu_\infty$.

{\JB Fix any finite} partition $\alpha$ of $X$ into subsets of diameter less than $\eps$ and with negligible boundaries with respect to $\mu_\infty$ {\JB (such an $\alpha$ always exists). Then} for 
all $m\in\mathbb{N}$, 
\[ H_{\mu_k}(\alpha^m) \ge \frac{m}{n_k} H_{\sigma_k}(\alpha^{n_k}) -\frac{2m^2}{n_k}\log\lvert\alpha\rvert \qquad \forall k \text{ such that }n_k\ge 2m \]
and $H_{\mu_k}(\alpha^m)\to H_{\mu_\infty}(\alpha^m)$ as $k\to\infty$.
\end{lemma}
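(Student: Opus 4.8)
\textbf{Proof proposal for Lemma~\ref{lem-Misiurewicz}.}

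The plan is to reproduce Misiurewicz's classical argument for the variational principle, checking that nothing in it uses the specific form of the weights $\sigma_k$ beyond their being probability measures on the separated sets. First I would recall the standard estimates on entropy of refinements of a partition: for a partition $\alpha$ with $|\alpha|$ elements and any $\mu$-integrable cocycle structure, if $n = qm + s$ with $0 \le s < m$, then the join $\alpha^n = \bigvee_{i=0}^{n-1} T^{-i}\alpha$ can be grouped into $q$ blocks of the form $T^{-jm}(\alpha^m)$ plus a remainder of $s < m$ coordinates, each contributing at most $\log|\alpha|$. Subadditivity of static entropy $H_\sigma(\cdot)$ under joins then gives
\[
  H_{\sigma_k}(\alpha^{n_k}) \le \sum_{j=0}^{q-1} H_{\sigma_k}\bigl(T^{-jm}(\alpha^m)\bigr) + s\log|\alpha| = \sum_{j=0}^{q-1} H_{(T^{jm})_*\sigma_k}(\alpha^m) + s\log|\alpha|.
\]
More carefully one averages over the $m$ possible ``phases'' of the partition of $\{0,\dots,n_k-1\}$ into length-$m$ blocks: summing the above over the $m$ starting offsets $r \in \{0,\dots,m-1\}$, each index $\ell \in \{0,\dots,n_k-1\}$ appears in an interior block for at least $n_k - 2m$ of the offsets, which yields
\[
  m\, H_{\sigma_k}(\alpha^{n_k}) \le \sum_{\ell=0}^{n_k-1} H_{(T^\ell)_*\sigma_k}(\alpha^m) + 2m^2 \log|\alpha|.
\]

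Next I would invoke concavity of $H_{(\cdot)}(\alpha^m)$ as a function of the measure: since $\mu_k = \frac{1}{n_k}\sum_{\ell=0}^{n_k-1} (T^\ell)_*\sigma_k$ is a convex combination,
\[
  H_{\mu_k}(\alpha^m) \ge \frac{1}{n_k} \sum_{\ell=0}^{n_k-1} H_{(T^\ell)_*\sigma_k}(\alpha^m).
\]
Combining the two displays and dividing appropriately gives exactly
\[
  H_{\mu_k}(\alpha^m) \ge \frac{m}{n_k} H_{\sigma_k}(\alpha^{n_k}) - \frac{2m^2}{n_k}\log|\alpha|
\]
for all $k$ with $n_k \ge 2m$, which is the asserted inequality. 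For the final convergence statement, the existence of a partition $\alpha$ into sets of diameter $< \eps$ with $\mu_\infty$-negligible boundaries is standard (cover $X$ by finitely many balls of radius $< \eps/2$ whose boundary spheres have zero $\mu_\infty$-measure — possible since for each center all but countably many radii work — and disjointify); then each atom of $\alpha^m$ is an intersection of preimages of such sets, hence also has $\mu_\infty$-negligible boundary, so $\mathbbm 1_A$ is $\mu_\infty$-a.e. continuous for every atom $A$, and weak-star convergence $\mu_k \to \mu_\infty$ forces $\mu_k(A) \to \mu_\infty(A)$ and therefore $H_{\mu_k}(\alpha^m) \to H_{\mu_\infty}(\alpha^m)$ by continuity of $t \mapsto -t\log t$ on the (finitely many) atom-masses.

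I do not expect a genuine obstacle here: the whole content is bookkeeping with the combinatorics of block decompositions of $\{0,\dots,n_k-1\}$ and the standard subadditivity/concavity properties of static entropy, none of which sees the nonlinear energy at all — the $\sigma_k$ enter only as abstract probability measures. The one point requiring a little care is getting the constant $2m^2$ (rather than, say, $m^2$) right in the boundary-block count, and making sure the phase-averaging argument is stated cleanly; I would handle this by the offset-averaging computation sketched above, where the factor $2m$ accounts for the at most two incomplete blocks (one at each end) for each of the $m$ phases. The existence of a good partition $\alpha$ and the weak-star continuity conclusion are routine and can be dispatched with one or two sentences each.
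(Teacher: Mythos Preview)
Your proposal is correct and follows exactly the approach the paper indicates (the paper does not reproduce the proof but merely sketches the offset-averaging over the $m$ block decompositions of $\llbracket 0,n_k-1\rrbracket$, which is precisely what you do). One sentence is garbled---``each index $\ell$ appears in an interior block for at least $n_k-2m$ of the offsets'' is not the right bookkeeping statement---but the displayed inequality you derive from the phase-averaging is correct, and the concavity and boundary-regularity arguments are the standard ones.
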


The proof is not reproduced here, let us simply mention that it consists in partitioning in $m$ different ways the integer interval $\llbracket 0, n_k-1\rrbracket$ into subintervals of length $m$ plus a small remainder at the start and end. Note that the hypothesis that $\mathcal{C}_k$ is $(\eps,n_k)$-separated is intended to make the computation of $H_{\sigma_k}(\alpha^{\JB n_k})$ a formality: each element of $\alpha^{\JB n_k}$ contains at most one element of $\mathcal{C}_k$.

To address the nonlinearity, we now divide the space of measures into parts where the energy is almost constant, and then split $(\eps,n)$-separated sets according to this partition.

\begin{lemma}\label{lem-dividing}
Let $\eps>0,\gamma\in(0,1)$ and $(\mathcal{C}_k)_{k\in\mathbb{N}}$ be a sequence of $(\eps,n_k)$-separated subsets of $X$ where $n_k\to\infty$. There exist $N=N(\gamma)\in\mathbb{N}$, real numbers $(E_i)_{1\le i\le N}$, a sequence of partitions $\mathbf{D}_k=(\mathcal{D}_{k,i})_{1\le i\le N}$ of $\mathcal{C}_k$ and $I\subset \llbracket 1,N\rrbracket$ such that, up to extracting a subsequence (still denoted by $(n_k)_k$), for all $k$:
\begin{enumerate}
\item $\sum_{i\notin I}\omega_{n_k}(\mathcal{D}_{k,i})\le \gamma \omega_{n_k}(\mathcal{C}_k)$,
\item for all $i\in I$, $\omega_{n_k}(\mathcal{D}_{k,i})\ge \frac{\gamma}{N} \omega_{n_k}(\mathcal{C}_k)$,
\item for all $i$, for all $\mu\in\Prob$ that is a convex combination of the measures $\Delta_x^{n_k}$ where $x$ runs over $\mathcal{D}_{k,i}$, $\lvert \En(\mu)-E_i\rvert \le \gamma$,
\item for all $i\in I$, $\lvert \mathcal{D}_{k,i}\rvert \ge \frac{\gamma}{N} \omega_{n_k}(\mathcal{C}_k)e^{-n_k(E_i+\gamma)}$.
\end{enumerate}
\end{lemma}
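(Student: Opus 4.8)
The plan is to discretize the compact space of possible energy values using uniform continuity of $\En$, and then to use a pigeonhole-type argument to extract a subsequence along which the mass of each ``energy class'' is asymptotically stable. Here is the strategy in more detail.

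\medskip

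\textbf{Step 1: Partition the energy range.} Since $\Prob$ is compact and $\En$ is continuous, $\En$ is uniformly continuous and bounded, say $\En(\Prob)\subset[a,b]$. Choose $E_1<E_2<\dots<E_N$ an arithmetic progression of step $\le \gamma$ covering $[a-\gamma,b+\gamma]$, so that $N=N(\gamma)$ depends only on $\gamma$ (and on $a,b$, which are fixed once $\En$ is). For $1\le i\le N$ let $U_i\subset\Prob$ be the (weak-star closed) set of measures $\mu$ with $\lvert\En(\mu)-E_i\rvert\le\gamma/2$, chosen so that the $U_i$ cover $\Prob$; note $\mu\in U_i$ forces $\lvert\En(\mu)-E_i\rvert\le\gamma$. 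Now, crucially, I want the classes $\mathcal{D}_{k,i}$ to consist of points $x$ such that \emph{every} convex combination of the empirical measures $\Delta_x^{n_k}$ over $\mathcal{D}_{k,i}$ lands near $E_i$ --- this is item (iii). To achieve that I will instead work with a common \emph{weak-star neighborhood trick}: since $\En$ is uniformly continuous, pick $\rho>0$ with $\wass(\mu,\nu)\le\rho\Rightarrow\lvert\En(\mu)-\En(\nu)\rvert\le\gamma/3$; cover the compact set $\Prob$ by finitely many $\wass$-balls of radius $\rho/2$ and group them by the value of $\En$ at their centers rounded to the grid $(E_i)$. Assign $x\in\mathcal{C}_k$ to the class $i$ of (one of) the ball(s) containing $\Delta_x^{n_k}$. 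Then all $\Delta_x^{n_k}$ with $x\in\mathcal{D}_{k,i}$ lie within $\wass$-distance $\rho$ of a common point, hence in a \emph{convex} $\wass$-ball of radius $\rho$ around it (Wasserstein balls are convex since $\wass$ comes from a norm-like structure on the dual side — more simply, the set $\{\nu:\wass(\nu,\mu_0)\le\rho\}$ is convex because $\wass(\cdot,\mu_0)$ is a convex function, being a supremum of affine functions $\nu\mapsto\nu(f)-\mu_0(f)$). Therefore any convex combination $\mu$ of those $\Delta_x^{n_k}$ still satisfies $\wass(\mu,\mu_0)\le\rho$, whence $\lvert\En(\mu)-\En(\mu_0)\rvert\le\gamma/3$ and $\lvert\En(\mu)-E_i\rvert\le\gamma$. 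This gives (iii) for the full partition $\mathbf{D}_k=(\mathcal{D}_{k,i})_{1\le i\le N}$, after relabelling the finitely many balls into at most $N$ grid classes (and discarding empty classes, or keeping them empty which is harmless).

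\medskip

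\textbf{Step 2: Diagonal extraction for stability of the mass split.} For each $k$ and each $i$, set $p_{k,i}=\omega_{n_k}(\mathcal{D}_{k,i})/\omega_{n_k}(\mathcal{C}_k)\in[0,1]$, so $\sum_i p_{k,i}=1$. Since $[0,1]^N$ is compact, pass to a subsequence (still called $(n_k)$) along which $p_{k,i}\to p_{\infty,i}$ for every $i$. Define $I=\{i: p_{\infty,i}>0\}$; this set is nonempty since the $p_{\infty,i}$ sum to $1$. For $i\notin I$ we have $p_{k,i}\to0$; summing the finitely many such indices, $\sum_{i\notin I}p_{k,i}\to0$, so after going further along the subsequence $\sum_{i\notin I}\omega_{n_k}(\mathcal{D}_{k,i})\le\gamma\,\omega_{n_k}(\mathcal{C}_k)$ for all $k$, which is (i). For $i\in I$, $p_{k,i}\to p_{\infty,i}>0$, so for $k$ large (again shrink the subsequence) $p_{k,i}\ge p_{\infty,i}/2\ge\gamma/N$ provided we also, at the very start, throw into $I$'s complement any class whose limiting mass is below, say, $\gamma/N$; re-examining, it is cleanest to \emph{define} $I=\{i:p_{\infty,i}\ge 2\gamma/N\}$ (possible only if such redefinition still leaves $\sum_{i\notin I}p_{\infty,i}$ controlled — indeed $\sum_{i\notin I}p_{\infty,i}\le N\cdot(2\gamma/N)=2\gamma$, slightly too big, so instead run the argument with $\gamma/2$ throughout and absorb constants, or simply note $N$ is fixed and rename $\gamma\leftarrow\gamma/(2N)$ at the end). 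Modulo this bookkeeping, (ii) holds: $\omega_{n_k}(\mathcal{D}_{k,i})\ge\frac{\gamma}{N}\omega_{n_k}(\mathcal{C}_k)$ for $i\in I$.

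\medskip

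\textbf{Step 3: Cardinality lower bound (iv).} This is immediate from (iii) once (ii) is in hand. For $i\in I$ and $x\in\mathcal{D}_{k,i}$, the empirical measure $\Delta_x^{n_k}$ is itself a (trivial) convex combination of the $\Delta_{x'}^{n_k}$, $x'\in\mathcal{D}_{k,i}$, so $\lvert\En(\Delta_x^{n_k})-E_i\rvert\le\gamma$, hence $e^{n_k\En(\Delta_x^{n_k})}\le e^{n_k(E_i+\gamma)}$. Summing over $x\in\mathcal{D}_{k,i}$,
\[
\omega_{n_k}(\mathcal{D}_{k,i})=\sum_{x\in\mathcal{D}_{k,i}}e^{n_k\En(\Delta_x^{n_k})}\le\lvert\mathcal{D}_{k,i}\rvert\,e^{n_k(E_i+\gamma)},
\]
so $\lvert\mathcal{D}_{k,i}\rvert\ge\omega_{n_k}(\mathcal{D}_{k,i})e^{-n_k(E_i+\gamma)}\ge\frac{\gamma}{N}\omega_{n_k}(\mathcal{C}_k)e^{-n_k(E_i+\gamma)}$, which is (iv).

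\medskip

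\textbf{Main obstacle.} The only genuinely delicate point is Step 1: item (iii) is not just ``$\En(\Delta_x^{n_k})$ is near $E_i$'' but the stronger statement that \emph{all convex combinations} of those empirical measures have energy near $E_i$. The resolution is precisely the observation that the classes should be defined via a weak-star (Wasserstein) neighborhood that is convex, so that convex combinations do not escape it, combined with uniform continuity of $\En$; a crude ``bin by the value $\En(\Delta_x^{n_k})$'' would fail because the bin need not be convex in $\Prob$. The rest (Steps 2--3) is routine compactness and pigeonhole, with only mild constant-chasing to make the thresholds $\gamma$, $\gamma/N$ line up, which one handles by fixing $N=N(\gamma)$ first and then, if needed, running the whole argument with a smaller auxiliary parameter and renaming at the end.
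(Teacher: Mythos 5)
You correctly identified the crux --- the classes must be small in the $\wass$ metric (hence convex-friendly), not mere energy bins --- but your Step 1 as written then throws this away at the last moment. After covering $\Prob$ by $\wass$-balls of radius $\rho/2$ you ``group them by the value of $\En$ at their centers rounded to the grid $(E_i)$'' and finally ``relabel the finitely many balls into at most $N$ grid classes''. A grid class is then a union of balls whose centers merely share (approximately) the same energy; two such centers can be far apart in $\wass$, so the assertion that ``all $\Delta_x^{n_k}$ with $x\in\mathcal{D}_{k,i}$ lie within $\wass$-distance $\rho$ of a common point'' is false, and with it your proof of item (iii). The failure is not cosmetic: for the quadratic (Curie--Weiss) energy $\En(\mu)=\mu(\pot)^2$ on the full $2$-shift with $\pot=-1$ on $[a]$ and $\pot=1$ on $[b]$, the empirical measures of the two fixed points $aaa\dots$ and $bbb\dots$ both have energy $1$ and would be merged into the same grid class, while their average has energy $0$; so item (iii) genuinely fails for the merged classes once $\gamma$ is small. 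The fix is simply not to merge: keep one class per ball of the cover (equivalently, per point $\sigma_i$ of a $\delta$-covering of $(\Prob,\wass)$, assigning each empirical measure to, say, the nearest center of least index) and set $E_i:=\En(\sigma_i)$. The lemma does not require the numbers $E_i$ to be distinct, and $N$ still depends only on $\gamma$ through the covering number of the compact set $\Prob$. With that change your convexity argument (convexity of $\nu\mapsto\wass(\nu,\sigma_i)$, or equivalently an averaged coupling) does give (iii), and your proof becomes essentially the paper's.

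Two secondary remarks. In Step 2, the convergence-plus-threshold extraction has the borderline problem you yourself flagged (limits $p_{\infty,i}$ at or near $\gamma/N$), and the proposed renaming of $\gamma$ does not quite reproduce the constants in the statement; the cleaner device, used in the paper, is to extract a subsequence along which, for each $i$, the weight $\omega_{n_k}(\mathcal{D}_{k,i})$ is either always $\ge\frac{\gamma}{N}\omega_{n_k}(\mathcal{C}_k)$ or always $<\frac{\gamma}{N}\omega_{n_k}(\mathcal{C}_k)$, and to let $I$ be the first category: then (ii) is immediate and (i) follows because the at most $N$ remaining classes each contribute less than $\frac{\gamma}{N}$ of the total weight. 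Your Step 3 is correct and identical to the paper's derivation of (iv) from (ii) and (iii).
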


\begin{proof}
Since $\En$ is continuous and $\Prob$ is compact, there exists $\delta>0$ such that for all $\mu,\nu\in\Prob$, $\wass(\mu,\nu)\le\delta\implies \lvert \En(\mu)-\En(\nu)\rvert\le \gamma$.

Let $S=\{\sigma_1,\dots,\sigma_N\}$ be a $\delta$-covering of $(\Prob,\wass)$ and set $E_i : =\En(\sigma_i)$. For each $\mu\in\Prob$ we can define $i(\mu) = \min\{i \mid \wass(\mu,\sigma_i)\le\delta\}$. We then set $V_i=\{\mu\in \Prob\mid i(\mu)=i\}$; the $(V_i)$ form a partition of $\Prob$, and for all $\mu\in V_i$ we have $\lvert \En(\mu) - E_i\rvert \le \gamma$.

For all $k,i$, let $\mathcal{D}_{k,i} = \{x\in \mathcal{C}_{k} \mid \Delta_x^{n_k}\in V_i\}$. Up to extracting a subsequence, we can assume that for each $i\in\llbracket 1,N\rrbracket$ , either $\omega_{n_k}(\mathcal{D}_{k,i})\ge\frac{\gamma}{N} \omega_{n_k}(\mathcal{C}_{k})$ for all $k$, or  $\omega_{n_k}(\mathcal{D}_{k,i})<\frac{\gamma}{N} \omega_{n_k}(\mathcal{C}_{k})$ for all $k$. Let $I$ be the set of indices $i$ belonging to the first category. We have obtained the first two items; note that for any $k$ we have $\omega_{n_k}(\mathcal{C}_{k}) = \sum_i \omega_{n_k}(\mathcal{D}_{k,i})$, so that $I$ must be non empty. 

Consider a probability measure $\mu= \sum_{x\in\mathcal{D}_{k,i}} a_x \Delta_x^{n_k}$; then  $\wass(\mu,\sigma_i)\le\delta$: indeed, we have for each $x\in  \mathcal{D}_{k,i}$ a coupling $\pi_x\in\Gamma(\Delta_x^{n_k},\sigma_i)$ of cost at most $\delta$, and the cost of the coupling $\sum_x a_x \pi_x\in\Gamma(\mu ,\sigma_i)$ is thus at most $\delta$. As a consequence, $\lvert \En(\mu)-E_i\rvert \le \gamma$.

Given $i\in I$, combining both previous items yields:
\[\frac{\gamma}{N}\omega_{n_k}(\mathcal{C}_k) \le \omega_{n_k}(\mathcal{D}_{k,i}) \le \lvert \mathcal{D}_{k,i}\rvert e^{n_k(E_i+\gamma)} \] 
so that $\lvert \mathcal{D}_{k,i} \rvert\ge \frac{\gamma}{N}\omega_{n_k}(\mathcal{C}_k)e^{-n_k(E_i+\gamma)}$.
\end{proof}

\begin{lemma}\label{lem-clusterpoints}
Using the notations of the previous lemma, fix any $i\in I$ and assume $\frac{\log(\omega_{n_k}(\mathcal{C}_k))}{n_k}$ converges as $k\to+\8$ (this induces no loss in generality, since we already extracted subsequences and can do it once more). Define a sequence of probability measures by
\[\tilde\mu_k = \frac{1}{\omega_{n_k}(\mathcal{D}_{k,i})} \sum_{x\in\mathcal{D}_{k,i}} e^{n_k\En(\Delta_x^{n_k})} \Delta_x^{n_k}\]
If $T$ is continuous, then any accumulation points $\tilde\mu_\infty$ of this sequence is $T$-invariant and satisfies $\NLP(\tilde\mu_\infty)\ge \lim_{k} \frac{\log(\omega_{n_k}(\mathcal{C}_k))}{n_k} -5\gamma$.
\end{lemma}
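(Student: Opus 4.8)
The plan is to follow the strategy of Misiurewicz's proof of the linear variational principle, using Lemma \ref{lem-Misiurewicz} to handle entropy and the partition $\mathbf{D}_k$ from Lemma \ref{lem-dividing} to control the energy term. First I would record the two facts I need about $\tilde\mu_k$: by construction it is a convex combination of the $\Delta_x^{n_k}$ for $x\in\mathcal{D}_{k,i}$ (so item \textit{(iii)} of Lemma \ref{lem-dividing} will apply to it and to related convex combinations), and it can be rewritten, following eq.~\eqref{eq-def-ensemble}, in terms of the weighted empirical average. To invoke Lemma \ref{lem-Misiurewicz} I need the $\tilde\mu_k$ to be of the form $\frac1{n_k}\sum_{\ell=0}^{n_k-1} T_*^\ell\sigma_k$ for a probability measure $\sigma_k$ concentrated on $\mathcal{C}_k$; so I set $\sigma_k = \frac{1}{\omega_{n_k}(\mathcal{D}_{k,i})}\sum_{x\in\mathcal{D}_{k,i}} e^{n_k\En(\Delta_x^{n_k})}\delta_x$, which is supported on $\mathcal{C}_k$, and then $\frac1{n_k}\sum_\ell T_*^\ell\sigma_k = \tilde\mu_k$ since $\Delta_x^{n_k} = \frac1{n_k}\sum_\ell \delta_{T^\ell x}$. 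Passing to a subsequence we may assume $\tilde\mu_k\to\tilde\mu_\infty$; invariance of $\tilde\mu_\infty$ is the standard Krylov--Bogolyubov-type argument (valid because $T$ is continuous), using $\lVert T_*\tilde\mu_k - \tilde\mu_k\rVert \le \frac{2}{n_k}$.

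Next I would estimate $H_{\sigma_k}(\alpha^{n_k})$ from below, where $\alpha$ is a partition into sets of diameter less than $\eps$ with $\mu_\infty$-negligible boundaries. Since $\mathcal{C}_k$ is $(\eps,n_k)$-separated, each atom of $\alpha^{n_k}$ contains at most one point of $\mathcal{D}_{k,i}$, so $H_{\sigma_k}(\alpha^{n_k}) = -\sum_{x\in\mathcal{D}_{k,i}} p_x\log p_x$ where $p_x = \frac{e^{n_k\En(\Delta_x^{n_k})}}{\omega_{n_k}(\mathcal{D}_{k,i})}$. Using $\lvert\En(\Delta_x^{n_k})-E_i\rvert\le\gamma$ (item \textit{(iii)}) I get $p_x \le \frac{e^{n_k(E_i+\gamma)}}{\omega_{n_k}(\mathcal{D}_{k,i})}$, hence $-\log p_x \ge \log\omega_{n_k}(\mathcal{D}_{k,i}) - n_k(E_i+\gamma)$, and therefore $H_{\sigma_k}(\alpha^{n_k}) \ge \log\omega_{n_k}(\mathcal{D}_{k,i}) - n_k(E_i+\gamma)$. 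Combining with item \textit{(ii)} of Lemma \ref{lem-dividing}, $\omega_{n_k}(\mathcal{D}_{k,i}) \ge \frac{\gamma}{N}\omega_{n_k}(\mathcal{C}_k)$, so $\frac1{n_k}H_{\sigma_k}(\alpha^{n_k}) \ge \frac{1}{n_k}\log\omega_{n_k}(\mathcal{C}_k) + \frac1{n_k}\log\frac{\gamma}{N} - E_i - \gamma$. Feeding this into the Misiurewicz inequality of Lemma \ref{lem-Misiurewicz}, dividing by $m$, and letting $k\to\infty$ (so the error terms $\frac{2m}{n_k}\log\lvert\alpha\rvert$ and $\frac1{mn_k}\log\frac\gamma N$ vanish) while using $H_{\mu_k}(\alpha^m)\to H_{\mu_\infty}(\alpha^m)$, then letting $m\to\infty$ and using $\frac1m H_{\mu_\infty}(\alpha^m)\to h(T,\mu_\infty,\alpha)\le h(T,\mu_\infty)$, I obtain
\[ h(T,\tilde\mu_\infty) \ge \lim_k \frac{1}{n_k}\log\omega_{n_k}(\mathcal{C}_k) - E_i - \gamma. \]

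Finally I would bring in the energy of the limit measure. I need $\En(\tilde\mu_\infty) \ge E_i - O(\gamma)$; here the obstacle is that $\tilde\mu_\infty$ is a weak-star limit and not literally a convex combination of empirical measures of a single $\mathcal{D}_{k,i}$, so item \textit{(iii)} of Lemma \ref{lem-dividing} does not apply directly to it. The fix is to use the Wasserstein control built into the proof of Lemma \ref{lem-dividing}: each $\tilde\mu_k$ satisfies $\wass(\tilde\mu_k,\sigma_i)\le\delta$ (the covering point), where $\delta$ was chosen so that $\wass(\mu,\nu)\le\delta\implies\lvert\En(\mu)-\En(\nu)\rvert\le\gamma$; passing to the limit, $\wass(\tilde\mu_\infty,\sigma_i)\le\delta$ by lower semicontinuity of $\wass$, hence $\lvert\En(\tilde\mu_\infty)-E_i\rvert\le 2\gamma$ (using also $\lvert\En(\sigma_i)-E_i\rvert=0$ and accounting for the $\gamma$ in item \textit{(iii)} versus $E_i=\En(\sigma_i)$; a cleaner route is simply continuity of $\En$ at $\tilde\mu_\infty$ combined with $\tilde\mu_k\to\tilde\mu_\infty$ and $\lvert\En(\tilde\mu_k)-E_i\rvert\le\gamma$, giving $\lvert\En(\tilde\mu_\infty)-E_i\rvert\le\gamma$). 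Adding this to the entropy estimate:
\[ \NLP(\tilde\mu_\infty) = h(T,\tilde\mu_\infty) + \En(\tilde\mu_\infty) \ge \lim_k \frac1{n_k}\log\omega_{n_k}(\mathcal{C}_k) - E_i - \gamma + E_i - \gamma, \]
and being generous with the accumulated $\gamma$-errors (the $\gamma$'s from items \textit{(ii)}, \textit{(iii)}, the $\delta$-covering, and the Misiurewicz passage to the limit) the total is bounded by $5\gamma$, giving $\NLP(\tilde\mu_\infty)\ge \lim_k\frac{\log\omega_{n_k}(\mathcal{C}_k)}{n_k}-5\gamma$ as claimed. I expect the bookkeeping of these $\gamma$-errors to be the only delicate point; the structural steps are all either Misiurewicz's argument verbatim or direct consequences of Lemma \ref{lem-dividing}.
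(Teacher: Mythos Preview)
Your proposal is correct and follows essentially the same approach as the paper: define the weighted $\sigma_k$, verify $\tilde\mu_k = \frac{1}{n_k}\sum_\ell T_*^\ell\sigma_k$, obtain invariance of the limit, bound $H_{\sigma_k}(\alpha^{n_k})$ below using item \textit{(iii)} of Lemma~\ref{lem-dividing}, feed this into Lemma~\ref{lem-Misiurewicz}, and finish by controlling $\En(\tilde\mu_\infty)$ via continuity. The only cosmetic difference is that the paper routes the entropy bound through $\lvert\mathcal{D}_{k,i}\rvert$ and item \textit{(iv)} (yielding a loss of $4\gamma$ before adding the energy), whereas you keep $\omega_{n_k}(\mathcal{D}_{k,i})$ and invoke item \textit{(ii)} directly, which is slightly tighter; either way the final bound $-5\gamma$ is comfortably met.
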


The sequence given by
\[\mu_k = \frac{1}{\lvert \mathcal{D}_{k,i} \rvert} \sum_{x\in\mathcal{D}_{k,i}} \Delta_x^{n_k}\]
could be preferred to $(\tilde\mu_k)_k$ for the proof of \Inref{In:3}, and can be treated in pretty much the same way. However, we will need $(\tilde\mu_k)_k$ in Section \ref{sec:Gibbs} to describe the accumulation points of Gibbs ensembles.

\begin{proof}
Let first $\tilde\mu_\infty$ be an accumulation point of $(\tilde\mu_k)$; up to extracting a further subsequence, we assume $\tilde\mu_\infty=\lim_k \tilde\mu_k$.

To check that $\tilde\mu_\infty\in\Prob(T)$, first observe that $\wass(\Delta_x^{n_k},T_*\Delta_x^{n_k})\le \frac{\diam X}{n_k}$ by the total variation bound (i.e., using a coupling that leaves the common part $\sum_{1\le j<n_k} \delta_{T^j x}$ in place and moves the remaining mass $\frac{1}{n_k}$ from $x$ to $T^{n_k}x$) and conclude using an averaged coupling as in the proof of Lemma \ref{lem-dividing} above that $\wass(\tilde\mu_k,T_*\tilde\mu_k)\to 0$. Up to this point, no use was made of the continuity assumption on $T$. But we want to pass to the limit in the arguments of $\wass$, and the continuity ensures that $T_*\tilde\mu_k\to T_*\tilde\mu_\infty$. Then we get $\wass(\tilde\mu_\infty,T_*\tilde\mu_\infty)=0$, and thus $\tilde\mu_\infty\in\Prob(T)$. Note also that $\En(\tilde\mu_k)\le E_i+\gamma$ for all $k$, so that the same holds for $\tilde\mu_\infty$.

Consider a partition $\alpha$ of $X$ whose element have diameter at most $\eps$ and whose boundaries have zero measure with respect to $\tilde\mu_\infty$. Setting
\[\sigma_k = \frac{1}{\omega_{n_k}(\mathcal{D}_{k,i})} \sum_{x\in\mathcal{D}_{k,i}} e^{n_k \En(\Delta_x^{n_k})}\delta_x\]
we have $\tilde\mu_k= \frac{1}{n_k} \sum_{j=0}^{n_k-1} T_*^j \sigma_k$ and, since $\mathcal{D}_{k,i}$ is $(\eps,n_k)$-separated,
 \begin{align*}
    H_{\sigma_k}(\alpha^{n_k}) &= \sum_{x\in\mathcal{D}_{k,i}} p_x \log\frac{1}{p_x} &\text{where } p_x &= \frac{e^{n_k \En(\Delta_x^{n_k})}}{\omega_{n_k}(\mathcal{D}_{k,i})}\\
    &&\log \frac{1}{p_x} &\ge \log\big(\lvert\mathcal{D}_{k,i}\rvert e^{n_k(E_i-\gamma)} \big) - n_k(E_i+\gamma) \\
    H_{\sigma_k}(\alpha^{n_k}) &\ge \log\lvert\mathcal{D}_{k,i}\rvert-2n_k\gamma.
 \end{align*}
Lemma \ref{lem-Misiurewicz} applied to $\mathcal{D}_{k,i}$ asserts that 
$H_{\tilde\mu_k}(\alpha^{m})\ge \frac{m}{n_k} \log \lvert \mathcal{D}_{k,i}\rvert -2\gamma m-\frac{2m^2}{n_k}\log\lvert\alpha\rvert$ for all $m\in\mathbb{N}$ and all $k$ such that $n_k\ge 2m$. It follows that for all $m$ and all $k$ large enough (then taking successive limits as $k\to\infty$ and $m\to\infty$):
\begin{align*}
\frac1m H_{\tilde\mu_k}(\alpha^m) &\ge \frac{1}{n_k} \log \lvert \mathcal{D}_{k,i}\rvert -3\gamma \\
  &\ge \frac{\log \omega(\mathcal{C}_k)}{n_k}-E_i-4\gamma-\frac{\log(N/\gamma)}{n_k} \\
\frac1m H_{\tilde\mu_\infty}(\alpha^m)  &\ge \lim_k \frac{\log \omega(\mathcal{C}_k)}{n_k}-E_i-4\gamma \\
h(T,\tilde\mu_\infty) &\ge \lim_k \frac{\log \omega(\mathcal{C}_k)}{n_k} - \En(\mu_\infty)-5\gamma \\
\NLP(\mu_\infty) &\ge \lim_k \frac{\log \omega(\mathcal{C}_k)}{n_k} -5\gamma.
\end{align*}
\end{proof}

\begin{proposition}\label{prop-lower-bound}
If $T$ is continuous, then we have $\displaystyle\sup_{\mu\in\Prob(T)} \NLP(\mu) \ge \NLP_\top$.
\end{proposition}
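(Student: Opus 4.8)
The plan is to build, for each $\eps>0$ and each $\gamma\in(0,1)$, a $T$-invariant measure whose nonlinear pressure is at least $\NLP_\top(\eps)-5\gamma$, and then to let $\gamma\to0$ and $\eps\to0$. This is where Lemmas~\ref{lem-dividing} and~\ref{lem-clusterpoints} are combined.

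Fix $\eps>0$. By definition of $\NLP_\top(\eps)=\limsup_n\frac1n\log\zeta(\eps,n)$, choose $n_k\to\infty$ along which $\frac1{n_k}\log\zeta(\eps,n_k)\to\NLP_\top(\eps)$, and for each $k$ let $\mathcal C_k$ be an adapted $(\eps,n_k)$-separated set, so that $\omega_{n_k}(\mathcal C_k)=\zeta(\eps,n_k)$; such a set exists because $T$ is continuous and $X$ is compact. Then $\frac1{n_k}\log\omega_{n_k}(\mathcal C_k)\to\NLP_\top(\eps)$, and this convergence is preserved under passing to subsequences.

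Now fix $\gamma\in(0,1)$ and apply Lemma~\ref{lem-dividing} to $(\mathcal C_k)_k$: after extracting a subsequence we obtain $N$, the levels $(E_i)$, the partitions $\mathbf D_k=(\mathcal D_{k,i})_{1\le i\le N}$, and the nonempty index set $I$. Pick any $i\in I$ (possible precisely because the lemma guarantees $I\neq\emptyset$). Extract one further subsequence so that $\frac1{n_k}\log\omega_{n_k}(\mathcal C_k)$ converges, necessarily to $\NLP_\top(\eps)$, and apply Lemma~\ref{lem-clusterpoints}: since $\Prob$ is weak-star compact the sequence $(\tilde\mu_k)_k$ attached to this $i$ has an accumulation point $\tilde\mu_\infty$, which is $T$-invariant and satisfies
\[ \NLP(\tilde\mu_\infty)\ \ge\ \lim_k\frac{\log\omega_{n_k}(\mathcal C_k)}{n_k}-5\gamma\ =\ \NLP_\top(\eps)-5\gamma . \]
Hence $\sup_{\mu\in\Prob(T)}\NLP(\mu)\ge\NLP_\top(\eps)-5\gamma$ for every $\gamma\in(0,1)$, so $\sup_{\mu\in\Prob(T)}\NLP(\mu)\ge\NLP_\top(\eps)$; since this holds for every $\eps>0$ and $\NLP_\top=\lim_{\eps\to0}\NLP_\top(\eps)$, we conclude $\sup_{\mu\in\Prob(T)}\NLP(\mu)\ge\NLP_\top$, which is inequality~\Inref{In:3}. (If $\NLP_\top=+\infty$ the same construction produces invariant measures of arbitrarily large nonlinear pressure, so the inequality is trivially true.)

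The argument is essentially bookkeeping once the two lemmas are available; the only points needing care are the order of the limiting operations (first $n\to\infty$ and the successive subsequence extractions, then $\gamma\to0$, then $\eps\to0$) and the remark that subsequence extraction does not spoil the convergence $\frac1{n_k}\log\omega_{n_k}(\mathcal C_k)\to\NLP_\top(\eps)$. The real difficulty of this inequality has already been absorbed into Lemma~\ref{lem-clusterpoints} — and, through it, into the Misiurewicz-type estimate of Lemma~\ref{lem-Misiurewicz} — where the nonlinearity of $\En$ is neutralized by cutting $\Prob$ into finitely many pieces of almost-constant energy and restricting attention to a piece of roughly maximal weight.
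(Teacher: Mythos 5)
Your proof is correct and follows essentially the same route as the paper: choose a subsequence realizing $\NLP_\top(\eps)$ with adapted separated sets, apply Lemma~\ref{lem-dividing}, pick $i\in I$, and invoke Lemma~\ref{lem-clusterpoints} to get an invariant accumulation point with $\NLP(\tilde\mu_\infty)\ge\NLP_\top(\eps)-5\gamma$. The only difference is the (immaterial) order of the limiting steps — the paper fixes $\gamma$ first and chooses $\eps$ with $\NLP_\top(\eps)\ge\NLP_\top-\gamma$, whereas you let $\gamma\to0$ before $\eps\to0$.
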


\begin{proof}
Let $\gamma>0$, and choose $\varepsilon>0$ small enough to ensure 
\[\NLP_\top(\varepsilon):=\limsup_{n\to\infty} \frac1n \log \zeta(\eps,n)\ge \NLP_\top-\gamma.\]

For each $n\in\mathbb{N}$, let $\mathcal{C}_n$ be an $(\eps,n)$-separated subset of $X$ realizing $\zeta(\eps,n)$. Let $(n_k)_k$ be a sequence of integers such that $n_k\to\infty$ and $\frac{1}{n_k}\log \zeta(\eps,n_k) \to \NLP_\top(\varepsilon)$. 

We apply Lemma \ref{lem-dividing}, fix any $i\in I$, define $\tilde\mu_k$ as in Lemma \ref{lem-clusterpoints}  and let $\tilde\mu_\infty$ be any of its accumulation points. We then have
\[\NLP(\tilde\mu_\infty) \ge \lim \frac{\log\omega_{n_k}(\mathcal{C}_k)}{n_k}-5\gamma = \NLP_\top(\eps)-5\gamma \ge \NLP_\top-6\gamma.\]
Letting $\gamma$ go to zero ends the proof.
\end{proof}

Assuming $T$ is continuous and {\JB abundance of ergodic measures}, 
we have shown that:
 $$
    \NLP_\top \le \sup_{\mu\in\Prob(T)} \NLP(\mu) \leq \underline{\NLP}_\top .
 $$
Since, obviously, $\underline{\NLP}_\top \le \NLP_\top$, the above inequalities must be equalities. This proves eq.~\eqref{eq-def-Ptop1} under the assumptions of Theorem \ref{thm-var-prin-NL}.


\subsection{Proof of Theorem A: the expansive case}

We assume that $T$ is a homeomorphism admitting the expansivity constant $\eps_0>0$. To begin with, we let $0<\eps\le\eps_0$ and  show that
 \begin{equation}\label{eq-Ptop-exp0}
    \NLP_\top(\eps)=\NLP_\top(\eps_0):=\limsup_{n\to\infty} \frac1n\log\zeta(\eps_0,n).
 \end{equation}  

Let us prove that $\NLP_\top(\eps)\leq\NLP_\top(\eps_0)$ by extracting an $(\eps_0,n)$-separated set from an $(\eps,n)$-separated one and comparing their weights.

We first fix $\gamma>0$ arbitrarily small. By the uniform continuity of $\mathcal E$ on $\Prob$,
there is $0<\delta \leq2\eps$ such that
 \begin{equation}\label{eq-ECU}
   W(\mu,\nu)<\delta\implies \lvert \mathcal E(\mu)-\mathcal E(\nu) \rvert<\gamma. 
 \end{equation}
 
We need the following version of the Theorem of uniform expansivity.
 \begin{claim}
There exists $N\geq1$ such that  for all $n\geq 2N$, for any $x\in X$,
 \begin{equation}\label{eq-expansivity}
  \forall N\leq k<n-N\quad \diam(T^k(B(x,\eps_0,n)))<\delta/2 \leq\eps.
 \end{equation}
\end{claim}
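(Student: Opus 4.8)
The plan is to prove the uniform expansivity claim, which is a standard consequence of expansivity combined with a compactness argument. The statement says: given $\delta > 0$, there is $N \geq 1$ so that for all $n \geq 2N$, all $x \in X$, and all $N \leq k < n-N$, the diameter of $T^k(B(x,\eps_0,n))$ is less than $\delta/2$.

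First I would reduce to a cleaner equivalent formulation. Since $T$ is a homeomorphism with expansivity constant $\eps_0$, the key fact is: for every $\delta > 0$ there exists $N = N(\delta) \geq 1$ such that for all $x, y \in X$,
\[
  \Big( \forall |j| \leq N,\ d(T^j x, T^j y) \leq \eps_0 \Big) \implies d(x,y) < \delta/2.
\]
This is proven by contradiction: if it failed, then for each $N$ there would be $x_N, y_N$ with $d(T^j x_N, T^j y_N) \leq \eps_0$ for all $|j| \leq N$ but $d(x_N, y_N) \geq \delta/2$. By compactness of $X$, pass to a subsequence along which $x_N \to x$ and $y_N \to y$; then $d(x,y) \geq \delta/2 > 0$, yet for every fixed $j \in \ZZ$ we get $d(T^j x, T^j y) \leq \eps_0$ (since $T^j$ is continuous and the inequality holds for all large $N$), contradicting expansivity with constant $\eps_0$.

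Next I would deduce the claim from this. Fix such an $N = N(\delta)$, let $n \geq 2N$, $x \in X$, and $N \leq k < n - N$. Take any two points $y, z \in B(x,\eps_0,n)$; then $d(T^i y, T^i x) \leq \eps_0$ and $d(T^i z, T^i x) \leq \eps_0$ for all $0 \leq i < n$ (using the $\leq$ convention in the definition of the dynamical ball and expansivity constant), hence $d(T^i y, T^i z) \leq 2\eps_0$ — but this is not quite what we want; instead apply the triangle inequality more carefully, or better, apply the uniform statement directly to the pair $(T^k y, T^k z)$. Wait: we need $d(T^j(T^k y), T^j(T^k z)) \leq \eps_0$ for $|j| \leq N$, i.e. $d(T^{k+j} y, T^{k+j} z) \leq \eps_0$; since $N \leq k < n-N$ we have $0 \leq k+j < n$ for all $|j| \leq N$, so both $T^{k+j}y$ and $T^{k+j}z$ lie within $\eps_0$ of $T^{k+j}x$. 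That gives $d(T^{k+j}y, T^{k+j}z) \leq 2\eps_0$, which is too weak to feed into the uniform statement as stated. The clean fix is to prove the uniform statement with $2\eps_0$ in place of $\eps_0$ — but expansivity only controls orbits that stay within $\eps_0$. The correct route: apply the uniform statement to the pairs $(T^k y, T^k x)$ and $(T^k z, T^k x)$ separately, each of which does satisfy the $\eps_0$-hypothesis on $|j| \leq N$, concluding $d(T^k y, T^k x) < \delta/4$ and $d(T^k z, T^k x) < \delta/4$ (using $\delta/4$ as the target radius in the uniform lemma), hence $d(T^k y, T^k z) < \delta/2$ by the triangle inequality. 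Taking the supremum over $y, z \in B(x,\eps_0,n)$ gives $\diam(T^k(B(x,\eps_0,n))) \leq \delta/2 < \delta/2$... one should use strict inequality in the lemma to get $\leq \delta/2$, which suffices, or prove the lemma with target $\delta/8$ to get a strict bound; either way the claim follows.

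The main obstacle is really just the bookkeeping with the $\eps_0$ versus $2\eps_0$ issue and being careful that the index range $N \leq k < n-N$ guarantees $k+j$ stays in $[0,n)$ for all $|j|\leq N$; there is no deep difficulty. I would write it as follows.

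\medskip

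\begin{proof}[Proof of the Claim]
We first establish a uniform version of expansivity: for every $\eta>0$ there exists $N=N(\eta)\geq1$ such that for all $x,y\in X$,
\begin{equation}\label{eq-unif-exp-aux}
  \Big( \forall\, \lvert j\rvert\leq N,\ d(T^jx,T^jy)\leq\eps_0 \Big) \implies d(x,y)<\eta.
\end{equation}
Suppose not. Then there is $\eta>0$ and, for each $N\geq1$, points $x_N,y_N\in X$ with $d(T^jx_N,T^jy_N)\leq\eps_0$ for all $\lvert j\rvert\leq N$ and $d(x_N,y_N)\geq\eta$. By compactness of $X$ we may pass to a subsequence along which $x_N\to x$ and $y_N\to y$; then $d(x,y)\geq\eta>0$. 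For any fixed $j\in\ZZ$ and all $N\geq\lvert j\rvert$ we have $d(T^jx_N,T^jy_N)\leq\eps_0$, and letting $N\to\infty$ along the subsequence, continuity of $T^j$ gives $d(T^jx,T^jy)\leq\eps_0$. As this holds for every $j\in\ZZ$, expansivity with constant $\eps_0$ forces $x=y$, a contradiction. This proves \eqref{eq-unif-exp-aux}.

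Now apply \eqref{eq-unif-exp-aux} with $\eta=\delta/4$ and let $N=N(\delta/4)$. Let $n\geq2N$, $x\in X$, and $N\leq k<n-N$. Fix any $y,z\in B(x,\eps_0,n)$. For every $j$ with $\lvert j\rvert\leq N$ we have $N-N\leq k+j<n-N+N$, that is $0\leq k+j<n$, so by definition of the dynamical ball $d(T^{k+j}y,T^{k+j}x)\leq\eps_0$ and $d(T^{k+j}z,T^{k+j}x)\leq\eps_0$. Applying \eqref{eq-unif-exp-aux} to the pairs $(T^ky,T^kx)$ and $(T^kz,T^kx)$ yields $d(T^ky,T^kx)<\delta/4$ and $d(T^kz,T^kx)<\delta/4$, whence $d(T^ky,T^kz)<\delta/2$ by the triangle inequality. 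Taking the supremum over $y,z\in B(x,\eps_0,n)$ gives $\diam\big(T^k(B(x,\eps_0,n))\big)\leq\delta/2\leq\eps$, recalling that $\delta\leq2\eps$. This proves \eqref{eq-expansivity}.
\end{proof}
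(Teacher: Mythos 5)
Your proof is correct, and while it rests on the same basic mechanism (expansivity plus compactness), it takes a noticeably different route from the paper's. The paper argues by contradiction directly on the claim: it picks, for each $N$, points $x_N$ and times $n_N\ge 2N$, $N\le k_N\le n_N-N$ with $\diam(T^{k_N}(B(x_N,\eps_0,n_N)))\ge\delta/2$, notes that $T^{k_N}(B(x_N,\eps_0,n_N))\subset B(T^{k_N}x_N,\eps_0,N_0)$ for any fixed $N_0$, and then takes an accumulation point $y$ of $T^{k_N}x_N$ to conclude that every forward ball $B(y,\eps_0,N_0)$ has diameter at least $\delta/2$, contradicting expansivity. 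You instead isolate a standalone uniform-expansivity lemma with a \emph{two-sided} window ($d(T^jx,T^jy)\le\eps_0$ for all $\lvert j\rvert\le N$ forces $d(x,y)<\eta$) and apply it at time $k$ to the pairs $(T^ky,T^kx)$ and $(T^kz,T^kx)$ separately, which also neatly avoids the $2\eps_0$ trap you flagged. Your two-sided formulation is actually the more robust one under the stated hypothesis: for an expansive \emph{homeomorphism} (supremum over $n\in\ZZ$), forward dynamical balls $B(y,\eps_0,N_0)$ need not shrink as $N_0\to\infty$ (in the two-sided shift they stabilize on a local stable set), so it is the backward half of the window --- available precisely because $k\ge N$ --- that makes the compactness argument bite; the paper's write-up, which only tracks forward balls, is really the argument for positively expansive maps and needs your two-sided bookkeeping to cover the homeomorphism case. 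One small point: taking the supremum over pairs only yields $\diam(T^k(B(x,\eps_0,n)))\le\delta/2$ rather than the strict inequality stated in the claim; as you yourself note, running your lemma with $\eta=\delta/8$ (or any smaller margin) restores the strict bound, and in any case the only way the claim is used later is through the pointwise estimate $d(T^kx,T^ky)<\delta/2$, which your argument does deliver.
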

\begin{proof}[Proof of the Claim]
If this does not hold, pick for every $N${\BK :} $n_{N}\ge 2N$, $N\le k_{N}\le n_{N}-N$ and $x_{N}$ such that 
$$\diam(T^{k_{N}}(B(x_{N},\eps_{0},n_{N})))\ge \delta/2.$$
Pick $N_{0}$ and $N\ge N_{0}$. 
Note the following inclusions:
\begin{multline*}
  B(T^{k_{N}}(x_{N}),\eps_{0}, N_{0})\supset B(T^{k_{N}}(x_{N}),\eps_{0}, N))\\ 
  \supset B(T^{k_{N}}(x_{N}),\eps_{0}, n_{N}-N)\supset T^{k_{N}}(B(x_{N},\eps_{0},n_{N})).
\end{multline*}

Then, consider any accumulation point $y$ for $y_{N}:=T^{k_{N}}(x_{N})$. This yields 
$$\forall N_{0},\ \diam(B(y,\eps_{0},N_{0}))\ge \delta/2.$$
This is in contraction with the fact that $\eps_{0}$ is an expansivity constant. 
\end{proof}


We now fix some finite $(\eps/2,N)$-cover $C_\eps$ of $X$ and some large enough integer $n\geq1$ (exactly how large will be specified later on; in particular we assume equation \eqref{eq-expansivity} holds).

Given an arbitrary nonempty $(\eps,n)$-separated subset $S$ of $X$, we consider $\hS$ any  $(\eps_0,n)$-separated subset of $S$, maximal for inclusion. 

\begin{claim}\label{claim-W}
The following facts hold:
 \begin{enumerate}
  \item\label{itemNotEmpty} For every $x\in S$, $B(x,\eps_0,n)\cap \hS$ is nonempty;
  \item\label{itemCloseEnergy}  For every $x\in S$ and every $y\in B(x,\eps_0,n)$, $\lvert \En(\Delta^n_x)-\En(\Delta^n_y) \rvert \leq \gamma $.
  \item\label{itemBounded} For every $y\in \hS$, $1\le \lvert B(y,\eps_0,n)\cap S \rvert \le \lvert C_\eps\rvert^2$;
 \end{enumerate}
\end{claim}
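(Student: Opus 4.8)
\emph{Proof plan.} I would handle the three items in turn. Item \ref{itemNotEmpty} is a formal consequence of the maximality of $\hS$: if $x\in\hS$ then already $x\in B(x,\eps_0,n)\cap\hS$; otherwise, by maximality, $\hS\cup\{x\}$ — a subset of $S$ properly containing $\hS$ — fails to be $(\eps_0,n)$-separated, so there is $y\in\hS$ not $(\eps_0,n)$-separated from $x$, i.e., $y\in B(x,\eps_0,n)$ (here I use that the relation $y\in B(x,\eps,n)$ is symmetric in $x$ and $y$), and this $y$ shows that $B(x,\eps_0,n)\cap\hS$ is nonempty.

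For item \ref{itemCloseEnergy} the plan is to bound $\wass(\Delta_x^n,\Delta_y^n)$ and then apply \eqref{eq-ECU}. Given $x\in S$ and $y\in B(x,\eps_0,n)$, the synchronous coupling $\frac1n\sum_{k=0}^{n-1}\delta_{T^kx}\otimes\delta_{T^ky}$ has marginals $\Delta_x^n$ and $\Delta_y^n$, so $\wass(\Delta_x^n,\Delta_y^n)\le\frac1n\sum_{k=0}^{n-1}d(T^kx,T^ky)$. I would split this sum into the $2N$ terms with $0\le k<N$ or $n-N\le k<n$, bounded only by $\eps_0$, and the terms with $N\le k<n-N$, for which \eqref{eq-expansivity} gives $d(T^kx,T^ky)\le\diam\big(T^k(B(x,\eps_0,n))\big)<\delta/2$ (both $T^kx$ and $T^ky$ lying in that set). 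This yields $\wass(\Delta_x^n,\Delta_y^n)<\frac{2N\eps_0}{n}+\frac{\delta}{2}$, which is $<\delta$ once $n>4N\eps_0/\delta$; I simply add this to the standing requirement that $n$ be large. Then \eqref{eq-ECU} gives $\lvert\En(\Delta_x^n)-\En(\Delta_y^n)\rvert<\gamma$.

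For item \ref{itemBounded} the lower bound is immediate since $y\in B(y,\eps_0,n)\cap S$. For the upper bound I would construct an injection $B(y,\eps_0,n)\cap S\hookrightarrow C_\eps\times C_\eps$: to $x$ in the source set I associate $c(x)\in C_\eps$ with $x\in B(c(x),\eps/2,N)$ and $c'(x)\in C_\eps$ with $T^{n-N}x\in B(c'(x),\eps/2,N)$, both existing because $C_\eps$ is an $(\eps/2,N)$-cover. If $x,x'\in B(y,\eps_0,n)\cap S$ satisfy $c(x)=c(x')$ and $c'(x)=c'(x')$, then for $0\le k<N$ the triangle inequality through $T^kc(x)$ gives $d(T^kx,T^kx')<\eps$; for $n-N\le k<n$, writing $k=n-N+j$ with $0\le j<N$, the triangle inequality through $T^jc'(x)$ gives $d(T^kx,T^kx')<\eps$; and for $N\le k<n-N$, since $T^kx,T^kx'\in T^k(B(y,\eps_0,n))$, \eqref{eq-expansivity} gives $d(T^kx,T^kx')<\delta/2\le\eps$. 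Hence $x'\in B(x,\eps,n)$, and since $S$ is $(\eps,n)$-separated this forces $x=x'$; so the map is injective and $\lvert B(y,\eps_0,n)\cap S\rvert\le\lvert C_\eps\rvert^2$.

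The only step that needs genuine care is item \ref{itemCloseEnergy}: the crude estimate $d(T^kx,T^ky)<\eps_0$, valid on all $n$ coordinates, does not suffice because there is no reason to have $\eps_0<\delta$, and the gain comes precisely from \eqref{eq-expansivity}, which upgrades the bound from $\eps_0$ to $\delta/2$ on all but the $2N$ endpoint coordinates; taking $n$ large then makes the contribution of those $2N$ exceptional terms to the average negligible. Everything else is routine manipulation of dynamical balls, using repeatedly the symmetry $y\in B(x,\eps,n)\iff x\in B(y,\eps,n)$.
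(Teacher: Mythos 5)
Your proof is correct and follows essentially the same route as the paper: maximality of $\hS$ for item (\textit{i}), the synchronous coupling plus the uniform-expansivity estimate \eqref{eq-expansivity} to get $\wass(\Delta_x^n,\Delta_y^n)<\delta$ and then \eqref{eq-ECU} for item (\textit{ii}), and the injection into $C_\eps\times C_\eps$ via the three ranges of $k$ for item (\textit{iii}). The only (harmless) difference is that you bound the $2N$ boundary terms by $\eps_0$ where the paper uses $\diam(X)$.
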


\begin{proof}[Proof of the claim]
To see that (\ref{itemNotEmpty}) holds, note that, if for some $x\in S$, $B(x,\eps_0,n)\cap \hS=\emptyset$, $\hS\cup\{x\}$ would still be $(\eps_0,n)$-separated, contradicting the maximality of $\hS$. 

To prove (\ref{itemCloseEnergy}), let $x,y$ be any two points of $X$ with $y\in B(x,\eps_0,n)$. By eq.~\eqref{eq-expansivity}, $d(T^kx,T^ky)<\delta/2$ for all $N\leq k<n-N$, hence we get:
 $$\begin{aligned}
    W(\Delta^n_x,\Delta^n_y) 
      &\leq \frac1n\sum_{k=0}^{n-1} d(T^kx,T^ky) 
      \leq \frac{2N}{n}\diam(X) + \frac\delta2 <\delta
 \end{aligned}$$
 for large enough $n$. The claim (\ref{itemCloseEnergy}) now follows from eq. \eqref{eq-ECU}.

We turn to (\ref{itemBounded}). Since $\hS\subset S$, $y\in B(y,\eps_0,n)\cap S$ so this set is not empty.  To prove the upper bound let
$I:B(y,\eps_0,n)\cap S\to C_\eps\times C_\eps$ satisfy $I(z)=(w,w')$ with $w\in B(z,\eps/2,N)$ and $w'\in B(T^{n-N}z,\eps/2,N)$. Observe that such a map exists since $C_\eps$ is a $(\eps/2,N)$-cover of $X$ and let us check that $I$ is injective. Indeed, let $z,z'\in B(y,\eps_0,n)\cap S$ with $I(z)=I(z')=:(w,w')$ and note:
 \begin{itemize}
  \item for all $0\leq k<N$, $d(T^kz,T^kz')\leq d(T^kz,T^kw)+d(T^kw,T^kz') <\eps$;
  \item for all $N\leq k<n-N$, $d(T^kz,T^kz') < \eps$ from eq.~\eqref{eq-expansivity};
  \item for all $n-N\leq k<n$,
   $$d(T^kz,T^kz')\leq d(T^kz,T^{k-(n-N)}w')+d(T^{k-(n-N)}w',T^kz') <\eps.$$
 \end{itemize}
Thus $z,z'\in S$ are not $(\eps,n)$-separated and thus must be equal, proving the injectivity of the map $I$, proving (\ref{itemBounded}).
Claim \ref{claim-W} is established.
\end{proof}

We now compare the weights of $S$ and $\hS$:
 $$\begin{aligned}
   \omega_n(\hS) &= \sum_{y\in\hS} e^{n\En(\Delta^n_y)}
     \geq \sum_{y\in\hS} \min_{x\in B(y,\eps_0,n)\cap S} e^{n\En(\Delta^n_x)}
          &\text{since }\hS\subset S\\
    &\geq \sum_{y\in\hS} \frac{e^{-\gamma n} }{\lvert B(y,\eps_0,n)\cap S\rvert} \sum_{x\in B(y,\eps_0,n)\cap S} e^{n\En(\Delta^n_x)} &\text{ by eq. \eqref{itemCloseEnergy}}\\
     &\geq \sum_{y\in\hS} \frac{e^{-\gamma n}}{\lvert C_\eps\rvert^2} \sum_{x\in B(y,\eps_0,n)\cap S}  e^{n\En(\Delta^n_x)} &\text{ by eq. \eqref{itemBounded}}\\
    &\geq \frac{e^{-\gamma n}}{\lvert C_\eps\rvert^2} \sum_{x\in S} \lvert B(x,\eps_0,n)\cap\hS\rvert \,  e^{n\En(\Delta^n_x)} & \text{exchanging the sums}\\
    &\geq  \frac{e^{-\gamma n}}{\lvert C_\eps\rvert^2} \omega_n(S) &\text{ by eq. \eqref{itemNotEmpty}}.
 \end{aligned}$$
Therefore, $\frac1n\log\zeta(\eps_0,n)\geq \frac1n\log\omega_n(\hS)\geq\frac1n\log\zeta(\eps,n)-\gamma-\frac1n\log\lvert C_\eps\rvert^2$. Hence,
 $$
   \Pi_\top(\eps):=\limsup_{n\to\infty}\frac1n\log\zeta(\eps,n)\leq \limsup_{n\to\infty}\frac1n\log\zeta(\eps,n)+\gamma=:\Pi_\top(\eps_0)+\gamma
 $$
as  $\gamma>0$ was arbitrary we obtain: $\NLP_\top(\eps)\leq\NLP_\top(\eps_0)$ for all $0<\eps\leq\eps_0$. The definitions immediately yield the inequality $\zeta(\eps_0,n)\leq\zeta(\eps,n)$ and therefore $\NLP_\top(\eps)=\NLP_\top(\eps_0)$. This proves \eqref{eq-Ptop-exp0}.

The same argument applies to $\underline{\NLP}_\top(\eps):=\liminf_{n\to\infty}\frac1n\log\zeta(\eps,n)$ yielding: $\underline{\NLP}_\top(\eps) = \underline{\NLP}_\top(\eps_0)$. By eq.~\eqref{eq-def-Ptop1}, $\lim_{\eps\to0} \NLP_\top(\eps)= \lim_{\eps\to0} \underline{\NLP} _\top(\eps)$. Thus, $\NLP_\top(\eps)= \underline{\NLP} _\top(\eps)$ for all $0<\eps\leq\eps_0$: the upper and lower limits of $\frac1n\log\zeta(\eps,n)$ as $n$ goes to $\infty$ coincide. Thus, we have a true limit, independently of $\eps\in(0,\eps_0)$:
 $$
   \NLP_\top = \lim_{n\to\infty} \frac1n\log\zeta(\eps,n),
 $$
concluding the proof of Theorem~\ref{thm-var-prin-NL}.


\section{Existence of an equilibrium measure and convergence of the Gibbs ensembles}\label{sec:Gibbs}

In this section we prove Theorem~\ref{thm-existence-NL}. Its existence claim is a simple consequence of the variational principle we just established as Theorem~\ref{thm-var-prin-NL}.

\begin{lemma}\label{lemm-existence}
Assume that $T$ is continuous with $\mu\mapsto h(T,\mu)$ upper semi\-conti\-nuous, and 
{\JB that $(T,\En)$ has an abundance of ergodic measures}. Then the set $\EM$ of nonlinear equilibrium measures is non-empty.

Moreover, for all $\gamma>0$ there exists $\delta>0$ such that invariant measures achieving $\NLP^\En_\top(T)$ up to $\delta$ are $\gamma$-close to $\EM$: for all $\mu\in\Prob(T)$ such that $h(T,\mu)+\En(\mu) > \NLP^\En_\top(T)-\delta$, there exist $\mu'\in\EM$ such that $\wass(\mu,\mu') < \gamma$.
\end{lemma}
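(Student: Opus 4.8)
The plan is to obtain existence from compactness plus upper semicontinuity, and then to upgrade the weak-star convergence to a uniform "stability" statement by a standard compactness-contradiction argument. First, I would observe that $h(T,\cdot)+\En(\cdot)$ is upper semicontinuous on the weak-star compact set $\Prob(T)$: indeed $h(T,\cdot)$ is u.s.c.\ by hypothesis and $\En$ is continuous by the definition of an energy, so the sum is u.s.c. Hence it attains its supremum on $\Prob(T)$; by Theorem~\ref{thm-var-prin-NL} (which applies since $(T,\En)$ has an abundance of ergodic measures) this supremum equals $\NLP^\En_\top(T)$, so any maximizer lies in $\EM$, proving $\EM\neq\emptyset$. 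Note also that $\EM$ is closed: it is the preimage of $\{\NLP_\top^\En(T)\}$ under the u.s.c.\ function $h+\En$ intersected with the (closed) superlevel set, equivalently $\EM=\{\mu:h(T,\mu)+\En(\mu)\ge \NLP_\top^\En(T)\}$, which is closed since $h+\En$ is u.s.c.; being a closed subset of the compact set $\Prob(T)$, it is compact.

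For the moreover part I would argue by contradiction. Suppose the claim fails for some $\gamma>0$: then for every $\delta=\frac1n$ there is $\mu_n\in\Prob(T)$ with $h(T,\mu_n)+\En(\mu_n) > \NLP^\En_\top(T)-\frac1n$ but $\wass(\mu_n,\EM)\ge\gamma$. By weak-star compactness of $\Prob(T)$, pass to a subsequence with $\mu_n\to\mu_\infty\in\Prob(T)$. Upper semicontinuity of $h+\En$ gives $h(T,\mu_\infty)+\En(\mu_\infty)\ge\limsup_n\big(h(T,\mu_n)+\En(\mu_n)\big)\ge\NLP^\En_\top(T)$, and the reverse inequality holds by the variational principle, so $\mu_\infty\in\EM$. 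But $\wass(\mu_n,\mu_\infty)\to0$ contradicts $\wass(\mu_n,\EM)\ge\gamma$ for all $n$ (since $\wass(\cdot,\EM)$ is $1$-Lipschitz for $\wass$, or simply $\wass(\mu_n,\EM)\le\wass(\mu_n,\mu_\infty)$). This contradiction establishes the stability statement.

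The only mild subtlety — and the main thing to be careful about — is that $h+\En$ is merely upper semicontinuous, not continuous, so one must not use continuity of $h$ anywhere; the contradiction argument is set up precisely so that u.s.c.\ suffices in the one direction it is needed (bounding the limit from below by the limsup of the values), while the variational principle supplies the matching upper bound. Everything else is routine: weak-star compactness of $\Prob(T)$, the metrizability of the weak-star topology by $\wass$ on the compact space $X$ (recalled in the Preliminaries), and the fact that a nonempty closed subset of a compact metric space is compact so that $\wass(\cdot,\EM)$ is well-defined and achieved. I would present the two parts in the order above, since the closedness/compactness of $\EM$ used implicitly in the second part is most cleanly recorded while proving nonemptiness.
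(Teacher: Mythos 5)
Your proof is correct and follows essentially the same route as the paper: existence comes from upper semicontinuity of $h(T,\cdot)+\En(\cdot)$ on the compact set $\Prob(T)$ together with Theorem~\ref{thm-var-prin-NL}, and the stability statement from compactness. The only cosmetic difference is that you phrase the second part as a sequential contradiction argument, whereas the paper argues directly that the u.s.c.\ function attains a maximum $F_\gamma<\NLP^\En_\top(T)$ on the compact set $\{\mu\in\Prob(T)\colon \wass(\mu,\EM)\ge\gamma\}$ and takes $\delta=\NLP^\En_\top(T)-F_\gamma$; the two formulations are equivalent.
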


\begin{proof}
By assumption $\mu\mapsto h(T,\mu)+\En(\mu)$ is upper semi-continuous on the compact set $\Prob(T)$, it must therefore reach its maximum, which by Theorem~\ref{thm-var-prin-NL} is $\NLP^\En_\top(T)$. $\EM$ must thus be non-empty, and compact.

Given $\gamma>0$, the upper semi-continuous function $\mu\mapsto h(T,\mu)+\En(\mu)$ also reaches its maximum $F_\gamma$ on the compact set $\{\mu\in\Prob(T) \colon \wass(\mu,\EM) \ge\gamma\}$. Since this set is disjoint from $\EM$, $F_\gamma<\NLP^\En_\top(T)$. The positive number $\delta = \NLP^\En_\top(T)-F_\gamma$ has the desired property.
\end{proof}

The second part of Theorem \ref{thm-existence-NL} is proven along the same lines than Proposition~\ref{prop-lower-bound}.
\begin{proposition}
Assume that $T$ is an expansive homeomorphism with $\eps_0>0$ an expansivity constant, and 
{\JB that  $(T,\En)$ has an abundance of ergodic measures}. Let $\nu$ be an accumulation point of $(\eps_0,n)$-Gibbs ensembles as $n\to\infty$. Then $\nu$ can be approximated in the weak-star topology by linear combinations of nonlinear equilibrium measures.
\end{proposition}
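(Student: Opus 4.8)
The plan is to mimic closely the proof of Proposition~\ref{prop-lower-bound}, but starting from a genuine sequence of adapted $(\eps_0,n)$-separated sets rather than from near-maximizers, and then to upgrade the conclusion from ``some accumulation point has large nonlinear pressure'' to ``\emph{the} given accumulation point $\nu$ lies in the closed convex hull of $\EM$.'' First I would fix a sequence $(\mathcal{C}_n)_n$ of adapted $(\eps_0,n)$-separated sets with $\mu_{\mathcal{C}_n}\to\nu$, and recall that by Theorem~\ref{thm-var-prin-NL} (expansive case) $\frac1n\log\zeta(\eps_0,n)\to\NLP_\top$. Apply Lemma~\ref{lem-dividing} with $\eps=\eps_0$ and a small $\gamma>0$ to obtain, along a subsequence, the partitions $\mathbf{D}_k=(\mathcal{D}_{k,i})_{i}$, the energies $E_i$, and the index set $I$. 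For each $i\in I$ form the measures $\tilde\mu_k^{(i)}$ as in Lemma~\ref{lem-clusterpoints}; after a further diagonal extraction each converges to some $T$-invariant $\tilde\mu_\infty^{(i)}$ with $\NLP(\tilde\mu_\infty^{(i)})\ge \NLP_\top-5\gamma$.

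The new ingredient is a barycentric decomposition of $\mu_{\mathcal{C}_n}$ over the pieces $\mathcal{D}_{k,i}$. By definition of the Gibbs ensemble,
\[
  \mu_{\mathcal{C}_k} \;=\; \sum_{i=1}^{N} \frac{\omega_{n_k}(\mathcal{D}_{k,i})}{\zeta(\eps_0,n_k)}\, \tilde\mu_k^{(i)} \;=\; \sum_{i=1}^{N} \lambda_{k,i}\, \tilde\mu_k^{(i)},
\]
a convex combination with weights $\lambda_{k,i} = \omega_{n_k}(\mathcal{D}_{k,i})/\zeta(\eps_0,n_k)$ summing to $1$ (here I use that $\mathcal{C}_k$ realizes $\zeta$, so $\omega_{n_k}(\mathcal{C}_k)=\zeta(\eps_0,n_k)$, and that the $\mathcal{D}_{k,i}$ partition $\mathcal{C}_k$). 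Item~(i) of Lemma~\ref{lem-dividing} forces $\sum_{i\notin I}\lambda_{k,i}\le\gamma$, so up to an error of size $\le\gamma\,\diam(X)$ in Wasserstein distance $\mu_{\mathcal{C}_k}$ is a convex combination of the $\tilde\mu_k^{(i)}$, $i\in I$ only. Passing to the limit along a subsequence on which each $\lambda_{k,i}\to\lambda_{\infty,i}$ and each $\tilde\mu_k^{(i)}\to\tilde\mu_\infty^{(i)}$, and using that barycenter is weak-star continuous for finite combinations, we get
\[
  \wass\Big(\nu,\ \sum_{i\in I}\lambda_{\infty,i}\,\tilde\mu_\infty^{(i)}\Big) \;\le\; \gamma\,\diam(X).
\]

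Finally, each $\tilde\mu_\infty^{(i)}$ satisfies $h(T,\tilde\mu_\infty^{(i)})+\En(\tilde\mu_\infty^{(i)}) \ge \NLP_\top - 5\gamma$, so by the quantitative second part of Lemma~\ref{lemm-existence} there is $\delta(\gamma)\to 0$ as $\gamma\to 0$ (take $\gamma$ small enough that $5\gamma<\delta$) with $\wass(\tilde\mu_\infty^{(i)},\EM)<\gamma'$, where $\gamma'\to0$ as $\gamma\to0$; hence each $\tilde\mu_\infty^{(i)}$ is $\gamma'$-close to some equilibrium measure $m_i\in\EM$, and $\sum_{i\in I}\lambda_{\infty,i} m_i$ lies in the convex span of $\EM$ and is within $\gamma'$ of $\sum_{i\in I}\lambda_{\infty,i}\tilde\mu_\infty^{(i)}$. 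Combining, $\wass(\nu,\operatorname{conv}\EM) \le \gamma\,\diam(X)+\gamma'$. Since $\gamma>0$ was arbitrary and the closed convex hull of $\EM$ is weak-star closed, $\nu$ belongs to it, which is the assertion (and gives the representing measure $\xi$ on $\EM$ via Choquet/\cite{Phelps-book}). The main obstacle — really the only delicate point — is bookkeeping the nested subsequence extractions (one for Lemma~\ref{lem-dividing}, one to make $\log\zeta/n$ and the $\lambda_{k,i}$ converge, one per $i\in I$ for the $\tilde\mu_k^{(i)}$) and checking that the convex-combination identity for $\mu_{\mathcal{C}_k}$ together with the $\gamma$-truncation of the indices $i\notin I$ really does survive the limit with only an $O(\gamma)$ Wasserstein error; the rest is a direct reuse of Lemmas~\ref{lem-clusterpoints} and~\ref{lemm-existence}.
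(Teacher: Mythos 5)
Your proof is correct and follows essentially the same route as the paper's: apply Lemma~\ref{lem-dividing} and Lemma~\ref{lem-clusterpoints} to the adapted sets defining the Gibbs ensembles, decompose $\mu_{\mathcal{C}_k}$ as the convex combination $\sum_i \lambda_{k,i}\tilde\mu_k^{(i)}$, discard the indices outside $I$ (total weight at most $\gamma$), pass to the limit, and invoke the quantitative second half of Lemma~\ref{lemm-existence} to replace the almost-equilibrium limits by genuine elements of $\EM$. The only cosmetic difference is that you renormalize and invoke Lemma~\ref{lemm-existence} at the end rather than at the start, which changes nothing of substance.
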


\begin{proof}
Note that $T$ being an expansive homeomorphism, entropy is upper semi-continuous.
By the second half of Lemma \ref{lemm-existence}, we are reduced to approximate $\nu$ by convex combination of measures that almost achieve the nonlinear topological pressure.

By definition $\nu$ is the limit of measures of the form
 $$
     \mu_k = \frac{1}{\zeta(\eps_0,n_k)} \sum_{x\in\mathcal{C}_k} e^{n_k\En(\Delta_x^{n_k})} \Delta_x^{n_k}
 $$
where $n_k\to\infty$, $\mathcal{C}_k$ are $(\eps_0,n_k)$-separated sets with $\omega_{n_k}(\mathcal{C}_k) = \zeta(\eps_0,n_k)$. Fix some $\gamma>0$.

Apply Lemma \ref{lem-dividing}, providing $N\in\mathbb{N}$, partitions $\mathcal{D}_{k,1}, \dots ,\mathcal{D}_{k,N}$ of each $\mathcal{C}_k$ and $I\subset \llbracket 1,N\rrbracket$ such that up to further extracting a subsequence (still denoted by $(n_k)_k$), $\sum_{i\notin I} \omega_{n_k}(\mathcal{D}_{k,i}) <\gamma \omega_{n_k}(\mathcal{C}_k)$ and for all $i\in I$, Lemma \ref{lem-clusterpoints} applies.

For each $i$, consider
 $$
    \tilde\mu^i_k = \frac{1}{\omega_{n_k}(\mathcal{D}_{k,i})} \sum_{x\in \mathcal{D}_{k,i}} e^{n_k\En(\Delta_x^{n_k})} \Delta_x^{n_k}
 $$
and assume, up to further extraction, that it converges as $k\to\infty$ to some $\tilde\mu^i$. Then by Lemma \ref{lem-clusterpoints}, whenever $i\in I$:
 $$
    \NLP(\tilde \mu^i)\ge \lim \frac{\log(\omega_{n_k}(\mathcal{C}_k))}{n_k}-5\gamma = \NLP_\top-5\gamma.
 $$
The $\tilde\mu^i$ with $i\in I$ are the almost equilibrium measures we are looking for. 

We have
 $\mu_k = \sum_{i=1}^N a^i_k \tilde\mu^i_k$ where $a^i_k = \frac{\omega_{n_k}(\mathcal{D}_{k,i})}{\omega_{n_k}(\mathcal{C})}$. Up to a further extraction, we can assume that for each $i$ the sequence $(a^i_k)_k$ converges to some number $a_i\in[0,1]$. It follows that   
 $$
    \nu=\sum_{i=0}^N a_i \tilde\mu_i = \sum_{i\in I} a_i \tilde\mu_i + \sum_{i\notin I} a_i \tilde\mu_i
 $$ 
Note that $\sum_{i\notin I} a_i \le \gamma$, i.e., the second term above has total mass less than $\gamma$. For each $i\in I$ we set $b_i = a_i/\sum_{i\in I} a_i$, so that $\tilde\mu = \sum_{i\in I} b_i\tilde\mu_i$ is a convex combination of almost equilibrium states, and by the total variation bound $\wass(\nu,\tilde\mu) = O(\gamma)$.
\end{proof}

Theorem~\ref{thm-existence-NL} is established.

\section{Convexity and nonlinear equilibrium measures}\label{sec:convexity}

In this section, independently of Sections \ref{sec:th-A-1} and \ref{sec:Gibbs}, we prove an extended version of Theorem \ref{thm-equilibria}, i.e., we study the nonlinear formalism for an energy with potentials.  Specifically, we consider a continuous map $T:X\to X$ with finite entropy $h_\top(T)<\infty$ together with an energy defined as
 $$
    \En(\mu) = F\big(\mu(\pot_1),\dots,\mu(\pot_d)\big)
 $$
for all $\mu\in\Prob(T)$ where,  for some positive integer $d$,
 \begin{itemize}
  \item $\pot_1,\dots,\pot_d:X\to\mathbb{R}$ are continuous functions called the \emph{potentials};
  \item $F:U\to\mathbb{R}$ is a smooth function called the \emph{nonlinearity}.
\end{itemize}

Here we assume that $U\subset\mathbb{R}^d$ is an open set containing the compact \emph{rotation set}
 $$
   \rs(\pot_1,\dots,\pot_d) := \big\{\big(\mu(\pot_1),\dots,\mu(\pot_d)\big) \colon \mu\in\Prob(T) \big\}.
 $$
It will sometimes be convenient to write the potentials as a single vector-valued function: $\vpot := (\pot_1,\dots,\pot_d)$, $\vpot(x) := (\pot_1(x),\dots,\pot_d(x))$, $\mu(\vpot):=(\mu(\pot_1),\dots,\mu(\pot_d))$, etc.

We are going to study the nonlinear equilibrium measures:
 $$
    \EM := \big\{\mu\in\Prob(T):h(\mu)+F(\mu(\vpot))\text{ is maximal}\big\}
 $$

\begin{remark}
If one would like to apply our general results (the variational principle of Theorem~\ref{thm-var-prin-NL} and the equidistribution of Gibbs ensembles of Theorem~\ref{thm-existence-NL}), then one should demand $\En(\mu)$ to be defined for all (non-necessarily invariant) probability measures, i.e., the open set $U$ should contain the convex hull of $\{\vpot(x):x\in X\}$.
\end{remark}

The rest of this section is divided as follows. First, we introduce a ``fully nonlinear formalism'' which is the natural setting of our technique and describe the entropy-potential diagram which is a useful visualization. Second we recall the relevant background concerning Legendre duality and we set up appropriate definitions to use this duality and we provide examples of dynamical system satisfying them. Thirdly we weave all this together and apply Legendre duality in the dynamical context to reach the main goal of this section, Theorem \ref{thm:equilibria-G} (which contains Theorem \ref{thm-equilibria}). 
Finally we deduce some uniqueness results (Corollary~\ref{cor-finite-1D}, Propositions \ref{p-generic-unique}~and~\ref{prop-flexibility}).

\subsection{Fully nonlinear pressure}
Our approach applies to the following more general setting:

\begin{definition}\label{def-fully-nonlin}
Given a continuous system $T$ with potentials $\vpot$, a \emph{fully nonlinear pressure} is a function
 \begin{equation}
    \FNLP^G(\mu,\vpot) := G\big(h(\mu); \mu(\pot_1),\dots,\mu(\pot_d)\big)
 \label{eq-G}
 \end{equation}
defined for all $\mu\in\Prob(T)$  {\JB by} some smooth $G:V \to\mathbb{R}$
assumed to be \emph{admissible}: it is defined on an open subset $V$ of $\RR\times\mathbb{R}^{d}$ and satisfies:\footnote{The notation $\partial_0G$ refers to $\partial G/\partial z_{0}$, the derivative with respect to the first variable, corresponding to entropy since the coordinates are numbered as $(z_0,z_1,\dots,z_d)$.}

 $$
  \partial_0G>0
  \text{ and }
   V\supset\{(h(T,\mu),\mu(\pot_1),\dots,\mu(\pot_d)):\mu\in\Prob(T)\}.
  $$
The corresponding set of \emph{fully nonlinear equilibrium measures} is then:
 $$
   \EM(T,G,\vpot) := \{\mu\in\Prob(T):\FNLP^G(\mu,\vpot)\text{ is maximal }\}.
 $$
\end{definition}

We will reduce the problem of maximizing $\FNLP^G$ to the classical, linear thermodynamical formalism by justifying the following claims:
 \begin{itemize}
  \item[(*)] given ${z}\in\rho(\vpot)$, maximizing $\FNLP^G$ and maximizing the linear pressure over
    $$
      \mathcal M( z):=\{\mu\in\Prob(T):\mu(\vpot)= z\}
     $$ 
are both equivalent to maximizing the entropy there;
  \item[(**)] the values $z=\mu(\vpot)$ realized by fully nonlinear equilibrium measures $\mu$ belong to the interior of rotation set $\rs(\vpot)$;
  \item[(***)] there is a diffeomorphism $\operatorname{int}(\rho(\vpot))\to\RR^d$,  ${z}\mapsto{y}$, such that, for every ${z}\in\operatorname{int}(\rho(\vpot))$, there is a linear equilibrium measure $\nu_{\BK y}$ for the potential
   $$
     {y}\cdot\vpot:=\sum_j y_j\pot_j
  $$
with $\nu_y(\vpot)= z$.
 \end{itemize}
The first point is immediate given the assumption that $\partial_0 G>0$. The second and third point will follow from some convex analysis; the second point more precisely follows from the assumption that the gradient of entropy diverges at the boundary in the definition of $C^r$ Legendre systems (Definitions \ref{defi-regular} and \ref{defi-Legendre}) , see the proof of Theorem \ref{thm:equilibria-G}.

\subsection{The entropy-potential diagram and the entropy function}

In light of the above remark (*), we will use the following geometric viewpoint.  The \emph{entropy-potential diagram}, illustrated by Figure \ref{fig:diagramme2D}, is the set
 $$
    \mathcal{D} = \big\{ (z_0;z_1,\dots,z_d) \in [0,+\infty) \times \mathbb{R}^d \colon \exists \mu\in \Prob(T),\; h(T,\mu)\ge z_0,\;\forall i,\;\mu(\pot_i)=z_i\big\},
 $$
$\mathcal{D}$ can be seen as the hypograph of the entropy function, see {\JB the} function $\hf$ below.


\begin{figure}
\centering
\includegraphics[scale=1]{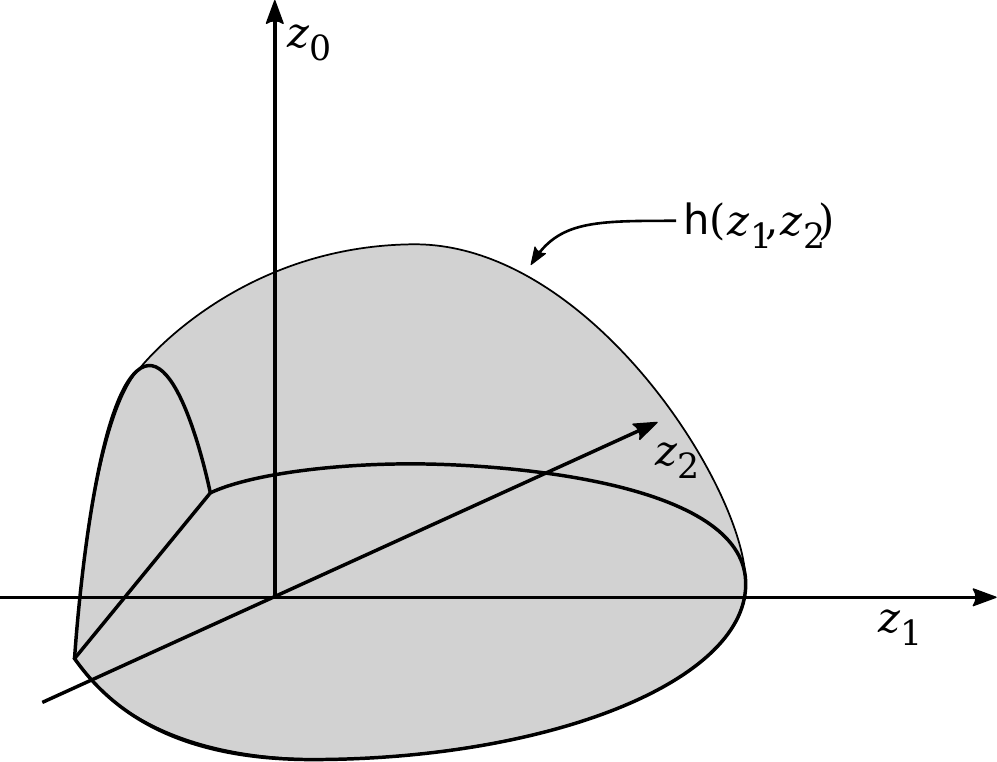}
\caption{An entropy-potentials diagram in two dimensions (first coordinate represented by the vertical axis), in a case when the rotation set is not strictly convex.}\label{fig:diagramme2D}
\end{figure}

Since the Kolmogorov-Sinai entropy is affine, $\mathcal{D}$ is a convex set, and the linear pressure associated to any linear combination  $\sum_i y_i \pot_i$ can be recovered from $\mathcal{D}$ by finding the unique\footnote{Since we fix the normal vector, uniqueness here does not depend on smoothness of $\mathcal{D}$; it is the contact points that may be non-unique, if strict convexity is not assumed.}
support hyperplane with normal vector $(1;y_1,\dots,y_d)$; this has important consequences, see Proposition \ref{prop:nlem}. 
Note that the convexity of $\mathcal{D}$ translates into the concavity of the following function, which is finite exactly on $\rs(\vpot)$:

\begin{definition}\label{defi-entropy}
Given a continuous dynamical system $T$ with potentials $\vpot$, the \emph{(finite-dimensional) entropy function} $\hf:\mathbb{R}^d\to\mathbb{R}\cup\{-\infty\}$ is\footnote{The usual convention $\sup(\varnothing) = -\infty$ is understood.}
 $$
    \hf( z) := \sup_{\mu\in \mathcal{M}( z)} h(T,\mu). 
 $$
\end{definition}

Under our standing assumptions ($X$ compact, $\vpot$ continuous, and $h_\top(T)<\infty$), we have $\{z\in\RR^d:\hf(z)\ne-\infty\}=\rs(\vpot)$.

\begin{remark}
To find the largest value of $\hf+F$ is to find the largest $k$ such that there exists $ z\in\rs(\vpot)$ at which $\hf( z) = -F( z)+k$, i.e., to find the highest vertical translate of the graph of $-F$ that touches the entropy-potential diagram. This makes apparent that the nonlinear equilibrium measures will correspond to linear equilibrium measures associated to one or several linear combinations of potentials, whose coefficients are given by the equations of the tangent hyperplanes at the touching points, see e.g., figure \ref{fig:CurieWeiss}.
\end{remark}

\subsection{Legendre duality}

To apply the well-rounded theory of Legendre duality, let us introduce its classical assumptions, following \cite{Rockafellar-book}.

Recall that the \emph{Legendre transform} $f^*$ of a convex function $f:\RR^d\to\RR\cup\{-\infty\}$ is the convex function:
 $$
  f^*:\RR^d\to\RR\cup\{\infty\},\quad y \mapsto \sup_{x\in\RR^d} \langle  y; x\rangle - f(x).
 $$
If $g$ is a concave function, we set 
 $$
   g^{\#}:\RR^d\to\RR\cup\{\infty\},\quad y \mapsto \sup_{x\in\RR^d} \langle y; x\rangle + g(x),
   $$ i.e., $g^{\#}:=(-g)^*$, which is convex.\footnote{Sometimes, the Legendre transform of a concave function is defined as $-(-g)^*$ instead, so that it is again concave.} 

\medbreak

We will use two classical duality results from \cite{Rockafellar-book}. They ensure that the Legendre transform is an involution on suitable classes of semicontinuous or smooth convex functions. 

\medbreak

\subsubsection*{Semicontinuous functions} A function is \emph{proper} if it is finite at least at one point.

\begin{theorem}\label{theo-duality-CO}
The Legendre transform maps bijectively the class of upper semicontinuous,\footnote{In \cite{Rockafellar-book}, lower semicontinuous convex functions are called \emph{closed}.} proper concave functions to the class of lower semicontinuous proper convex functions. Moreover, this restriction of the Legendre transform is an involution up to sign: for all such $f$, $f=-(f^{\#})^*$.
\end{theorem}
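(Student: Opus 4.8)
\textbf{Proof plan for Theorem~\ref{theo-duality-CO}.} The statement is classical (it is Rockafellar's Theorems~12.2 and~23.5 in \cite{Rockafellar-book}, transcribed from the convex to the concave setting via $g \mapsto -g$); the plan is to reduce it to the convex case. Since $g^\# = (-g)^*$ by definition, and $g$ is upper semicontinuous proper concave if and only if $f := -g$ is lower semicontinuous proper convex, the claim that the Legendre transform maps u.s.c.\ proper concave functions bijectively onto l.s.c.\ proper convex functions is equivalent to the assertion that $f \mapsto f^*$ maps l.s.c.\ proper convex functions into l.s.c.\ proper convex functions, which is exactly \cite[Theorem~12.2]{Rockafellar-book}: $f^*$ is always convex as a supremum of affine functions, it is l.s.c.\ for the same reason, and it is proper precisely when $f$ is proper, l.s.c.\ and convex (properness of $f^*$ fails only in the degenerate cases where $f$ takes the value $-\infty$ or is identically $+\infty$).

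The involution-up-to-sign statement is the Fenchel--Moreau biconjugation theorem: for $f$ l.s.c.\ proper convex one has $f^{**} = f$ \cite[Theorem~12.2]{Rockafellar-book}. To translate: $(g^\#)^* = ((-g)^*)^* = (-g)^{**} = -g$, so $-(g^\#)^* = g$, which is the displayed identity $f = -(f^\#)^*$ with $f$ standing for the generic concave function. Bijectivity then follows formally: the map $g \mapsto g^\#$ on u.s.c.\ proper concave functions has as two-sided inverse the map $\phi \mapsto -\phi^*$ on l.s.c.\ proper convex functions, since $g \mapsto g^\# \mapsto -(g^\#)^* = g$ by the identity just proved, and conversely $\phi \mapsto -\phi^* \mapsto (-\phi^*)^\# = (\phi^*)^* = \phi$ using $(-h)^\# = h^*$ and biconjugation for the convex function $\phi$.

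If a self-contained argument is preferred over citation, I would prove $f^{**} = f$ directly: the inequality $f^{**} \le f$ is the Fenchel--Young inequality ($\langle y; x\rangle \le f(x) + f^*(y)$, so $f^{**}(x) = \sup_y \langle y;x\rangle - f^*(y) \le f(x)$), and the reverse inequality uses the separating hyperplane theorem applied to the epigraph of $f$: if $f^{**}(x_0) < f(x_0)$, separate the point $(x_0, f^{**}(x_0))$ from the closed convex set $\operatorname{epi} f$ (closed precisely by lower semicontinuity), handling separately the case where the separating functional is ``vertical'' by a perturbation argument using that $f^*$ is proper somewhere.

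\textbf{Main obstacle.} There is essentially no mathematical obstacle here, as the result is standard; the only care needed is bookkeeping the sign conventions so that the concave-side formula $f = -(f^\#)^*$ matches the convex-side biconjugation $f^{**} = f$, and checking the properness bookkeeping (ensuring neither $f$ nor $f^\#$ degenerates to a constant $\pm\infty$ function, which is where the hypotheses ``proper'' and ``semicontinuous'' are genuinely used). Given that the paper already fixes \cite{Rockafellar-book} as its reference, the cleanest route is simply to invoke Theorems~12.2 and~23.5 there and record the sign translation, which is what I would do.
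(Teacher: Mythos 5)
Your proposal is correct and matches the paper's treatment: the paper states Theorem~\ref{theo-duality-CO} as a classical result extracted from \cite{Rockafellar-book} without giving a proof, exactly the reduction-to-the-convex-case-plus-citation route you take (your sign bookkeeping $g^{\#}=(-g)^*$ and the biconjugation argument are the intended content). The only nitpick is the citation: the bijectivity and involution statements are both contained in Rockafellar's Theorem~12.2 on the conjugacy correspondence for closed proper convex functions, while Theorem~23.5 concerns subdifferentials and is not needed here.
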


The above theorem implies that  the Legendre transform is an involution over the class of lower semicontinuous proper convex functions $g$ : $(g^*)^*=g$.

\subsubsection*{Smooth functions}
We consider the smoothness classes $C^r$ for $1\leq r\leq\omega$, i.e., for any positive integer $r$ as well as $r=\infty$ (infinitely differentiable) and $r=\omega$ (real-analytic). The following abuses of notation will be convenient: for $r=\infty$ or $\omega$, $C^{r-1}$ just means $C^r$; for $r=0$, a $C^r$ diffeomorphism is a homeomorphism.

\begin{definition}\label{defi-Legendre}
Let $f:\mathbb{R}^d\to\mathbb{R}\cup\{-\infty\}$ be a function. Its \emph{(effective) domain} is the set of points $\dom(f)$ in $\mathbb{R}^d$ where it takes a finite value: $\dom(f)=f^{-1}(\mathbb{R})$. For $1\leq r\leq\omega$,
the function $f$ is said to be \emph{concave of $C^r$ Legendre type} when the following conditions are satisfied:
\begin{enumerate}
\item\label{item-c0} the function $f$ is upper semicontinuous and concave;
\item\label{item-c1} the interior $\inter\dom(f)$ is not empty
and, on this set, $f$ is strictly concave and $C^r$ smooth; when $r\geq2$, we additionally ask that the Hessian of $f$ is everywhere negative definite;
\item\label{item-c2} for all sequences $(x_i)_{i\in\mathbb{N}}$ with $x_i\in\inter(\dom(f))$ which converge to a boundary point of $\dom(f)$,
 $$
    \lim_i \lvert \grad f(x_i) \rvert = +\infty.
 $$
\end{enumerate}
We say that a function $g:\RR^d\to\RR\cup\{\infty\}$ is convex of $C^r$ Legendre type if $-g$ is concave of $C^r$ Legendre type.
\end{definition}

Note that functions are convex of $C^1$ Legendre type exactly when they are convex of Legendre type in the sense of Rockafellar \cite[Chap. 26]{Rockafellar-book}. Let us now extract the following result from the classical theory of Legendre duality.

\begin{theorem}\label{theo-Legendre}
For each $1\leq r\leq\omega$, the Legendre transform of any concave or convex function $f$ of $C^r$ Legendre type is a convex function $f^\#$ or $f^*$ of $C^r$ Legendre type. 
Moreover, the following holds for $f$ concave:\footnote{For convex $f$, the same holds for $f^*$ except for the minus signs: $\grad f^*(y)=(\grad f)^{-1}(y)$ and $f^{**}=f$.} 
 \begin{enumerate}
  \item $\grad f:\inter(\dom(f))\to\inter(\dom(f^\#))$ is a $C^{r-1}$-diffeomorphism;
  \item\label{enumi:duality} for all $y\in\inter(\dom(f^\#))$,
 $$
  \grad f^\# (y) = (\grad f)^{-1}(-y) \text{ and  }
  f^\#(y)=z\cdot y+f(z) \text{ with }z=(\grad f)^{-1}(-y);
  $$
  \item $(f^{\#})^*=-f$.
 \end{enumerate}
\end{theorem}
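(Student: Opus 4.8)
The plan is to reduce the statement to the classical Legendre duality for closed proper convex functions of Legendre type, as developed in \cite[\S26]{Rockafellar-book}, and then to bootstrap the regularity with the inverse function theorem. First I would dispose of the concave case: if $f$ is concave of $C^r$ Legendre type then $g:=-f$ is convex of $C^r$ Legendre type, $f^\#=g^*$, and $\grad g=-\grad f$, so every assertion for concave $f$ follows from the corresponding assertion for convex functions after inserting the minus signs that occur in the gradient formulas. Hence it suffices to treat a convex $f$ of $C^r$ Legendre type and to prove: $f^*$ is again convex of $C^r$ Legendre type; $\grad f$ is a $C^{r-1}$-diffeomorphism from $\inter(\dom f)$ onto $\inter(\dom f^*)$ with inverse $\grad f^*$; $f^*(y)=\langle x,y\rangle-f(x)$ whenever $x=(\grad f)^{-1}(y)$; and $f^{**}=f$.

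For $r=1$ this is exactly \cite[Theorem~26.5]{Rockafellar-book}, together with the biconjugation theorem for closed proper convex functions and the characterization of the subdifferential by the Young--Fenchel equality \cite[Theorems~12.2 and~23.5]{Rockafellar-book}; recall that, as noted right after Definition~\ref{defi-Legendre}, ``convex of $C^1$ Legendre type'' is precisely Rockafellar's notion. This gives: $f^*$ is closed, proper, convex, with $\inter(\dom f^*)\neq\varnothing$, strictly convex on $\inter(\dom f^*)$, and with $\lvert\grad f^*(y_i)\rvert\to\infty$ whenever $y_i$ converges to a boundary point of $\dom f^*$; moreover $\grad f$ is a homeomorphism of $\inter(\dom f)$ onto $\inter(\dom f^*)$ with continuous inverse $\grad f^*$, and $f^{**}=f$. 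So for convex $f$ one gets the conjugacy formula $f^*(y)=\langle x,y\rangle-f(x)$ with $x=(\grad f)^{-1}(y)$, the involution $f^{**}=f$, and the $C^0$ case of the diffeomorphism property; translating back via the first paragraph yields the corresponding statements for concave $f$, including $(f^\#)^*=-f$.

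It then remains to upgrade the regularity when $2\le r\le\omega$. Here I would use condition~\ref{item-c1} of Definition~\ref{defi-Legendre} in its convex form: on the open set $\inter(\dom f)$ the function $f$ is $C^r$ and its Hessian is everywhere \emph{positive definite}, so $D(\grad f)=\operatorname{Hess}f$ is everywhere invertible. By the inverse function theorem --- in its $C^{r-1}$ form for $r\in\{2,3,\dots,\infty\}$ and in its real-analytic form for $r=\omega$ --- the map $\grad f$ is a \emph{local} $C^{r-1}$-diffeomorphism; being, by the $r=1$ step, a \emph{global} bijection onto $\inter(\dom f^*)$, it is a global $C^{r-1}$-diffeomorphism. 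Consequently $\grad f^*=(\grad f)^{-1}$ is $C^{r-1}$, so $f^*$ is $C^r$ on $\inter(\dom f^*)$, and differentiating $\grad f^*\circ\grad f=\id$ gives $\operatorname{Hess}f^*(y)=\bigl(\operatorname{Hess}f((\grad f)^{-1}(y))\bigr)^{-1}$, which is positive definite, so $f^*$ is strictly convex there. Combined with the properties of $f^*$ already recorded in the $r=1$ step (closedness, nonempty domain interior, divergence of $\lvert\grad f^*\rvert$ at the boundary), this shows $f^*$ is convex of $C^r$ Legendre type, completing the proof.

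I expect no serious obstacle: the argument is essentially a citation of Rockafellar plus a routine bootstrap. The two points that deserve a little care are (a) in the analytic case $r=\omega$, patching the \emph{local} real-analytic inverse furnished by the analytic inverse function theorem with the \emph{global} bijectivity coming from the $C^1$ theory, so as to conclude that $\grad f$ is a global real-analytic diffeomorphism --- and then using that a $C^1$ function whose gradient is real-analytic is itself real-analytic; and (b) the sign bookkeeping in the reduction from the concave to the convex case. Note also that the ``negative definite Hessian'' clause imposed for $r\ge 2$ in Definition~\ref{defi-Legendre} is genuinely used here: strict convexity alone does not force the Hessian to be nondegenerate (witness $x\mapsto x^4$ at the origin), and without that nondegeneracy the inverse function theorem would not apply.
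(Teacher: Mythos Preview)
Your proposal is correct and follows essentially the same route as the paper: reduce the concave case to the convex one via $g=-f$, invoke \cite[Theorem~26.5]{Rockafellar-book} for the $r=1$ case, and then bootstrap regularity for $r\ge2$ using the nondegenerate Hessian and the inverse function theorem. If anything, you are slightly more explicit than the paper in verifying that $f^*$ inherits a definite Hessian (hence is again of $C^r$ Legendre type) and in flagging the analytic inverse function theorem for $r=\omega$.
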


\begin{proof}
This statement follows from the results in \cite[Chap.~26]{Rockafellar-book}, except for the formula for $f^\#(y)$ in (\ref{enumi:duality}). When $r=1$, this is exactly Theorem 26.5 there applied to the convex function $g=-f$. Indeed,  $f^\#=g^*$ and $\grad f = I\circ\grad g$ with $I(y)=-y$. 
In particular, $\grad g^* = (\grad g)^{-1}$, i.e., $\grad f^\# = (I\circ\grad f)^{-1} =(\grad f)^{-1}\circ I$, proving the first formula in claim (\ref{enumi:duality}).

Now, $\grad f$ is a $C^{r-1}$ map. From the same theorem, $\grad f:\dom(f)\to\dom(f^\#)$ is a homeomorphism. It is  a $C^{r-1}$-diffeomorphism, using, if $r\geq2$, that the Hessian of $f$ is definite. The formula for $\grad f^\#$ ensures that this gradient is also $C^{r-1}$, thus $f^\#$ is $C^r$.

To conclude, let $y\in\inter(\dom(f^\#))$. Note that  $ z :=  (\grad f)^{-1}(-y)\in\inter(\dom(f))$ satisfies $\grad_{ z}\,( y\cdot z+f( z))=0$. Since $f$ is strictly concave on $\inter(\dom(f))$ and concave everywhere, $ z$ must be the unique maximizer on $\dom(f)$, proving the second half of (\ref{enumi:duality}).                                                                                                          
\end{proof}


\subsection{Application to dynamical systems}
Before exploiting Legendre duality further, let us discuss how the dynamical systems on which the linear Thermodynamical formalism is well-understood fit into our framework. We start with a convenient definition.

\begin{definition}\label{defi-regular}
For $1\leq r\leq\omega$, a continuous dynamical system with potentials $(T,\vpot)$ is \emph{$C^r$ Legendre} when:
\begin{enumerate}
\item\label{item-interior} the rotation set $\rs(\vpot)$ has non-empty interior in $\mathbb{R}^d$,
\item the topological entropy is finite: $h_\top(T)<\infty$;
\item the finite-dimensional entropy function $\hf:\mathbb{R}^d\to\mathbb{R}\cup\{-\infty\}$ is concave of $C^r$ Legendre type. 
\end{enumerate}

If moreover, for every $ y\in\RR^d$, there is exactly one linear equilibrium measure $\nu_{ y}$ for $T$ and the potential $ y\cdot\vpot$, then we say that $(T,\vpot)$  is $C^r$ Legendre \emph{with unique linear equilibrium measures}  $(\nu_{ y})_{ y\in\RR^d}$.
\end{definition}


The above classical theory of Legendre duality {\JB applied} to such systems leads to the (finite-dimensional linear) \emph{pressure function}:
 $$
    \Pf( y) := \sup_{\mu\in\Prob(T)} h(T,\mu)+ \mu( y\cdot \vpot).
 $$
It is the Legendre transform of the concave finite-dimensional entropy function~$\hf$:
 $$
    \Pf( y) = \hf^\#(y) := \sup_{ z\in\rs(\vpot)} \hf( z) + \langle  y; z\rangle.
 $$
 
In particular, if $(T,\vpot)$ is $C^r$ Legendre, then by applying Theorem \ref{theo-Legendre} we obtain that the pressure is a $C^r$ function.

In Definition \ref{defi-regular}, we took entropy as primary object, and then defined pressure by Legendre duality. However, it has been customary to discuss primarily the regularity of pressure -- using Legendre duality, both points of view can be unified as follows.

\begin{proposition}\label{prop:nlem0}
If $(T,\vpot)$ is a continuous system with potentials satisfying, for some $1\leq r\leq\omega$,
\begin{itemize}
 \item the rotation set $\rho(\vpot)$ has nonempty interior {\JB in $\RR^d$};
 \item the entropy function $h(T,\cdot)$ is upper semicontinuous and bounded over $\Prob(T)$;
 \item the finite-dimensional pressure {\JB function} $\Pf$ is finite over $\RR^d$, $C^r$ smooth,  strictly convex and,  when $r\ge 2$, with everywhere positive definite Hessian,
\end{itemize}
then $(T,\vpot)$ is a $C^r$ Legendre system.
\end{proposition}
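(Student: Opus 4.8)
The plan is to verify, one by one, the three conditions defining a $C^r$ Legendre system in Definition~\ref{defi-regular}. Condition~\ref{item-interior}, that $\rs(\vpot)$ has nonempty interior, is one of our hypotheses. The second condition, $h_\top(T)<\infty$, follows from the classical variational principle $h_\top(T)=\sup_{\mu\in\Prob(T)}h(T,\mu)$ together with the assumed boundedness of $\mu\mapsto h(T,\mu)$ on $\Prob(T)$. Thus the entire content lies in the third condition: the finite-dimensional entropy function $\hf$ must be concave of $C^r$ Legendre type. The engine is the identity $\hf^\#=\Pf$, which allows us to transfer the assumed regularity of $\Pf$ back to $\hf$ through the duality theorems already stated.

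First I would check that $\hf$ is a proper, upper semicontinuous, concave function with $\dom(\hf)=\rs(\vpot)$. Properness and the computation of the domain are immediate from the definition of $\rs(\vpot)$ and the nonemptiness of $\Prob(T)$. Concavity follows from the affineness of the Kolmogorov--Sinai entropy: if $z=tz_1+(1-t)z_2$ with $z_1,z_2\in\rs(\vpot)$ and $\mu_j\in\mathcal{M}(z_j)$ nearly realize $\hf(z_j)$, then $t\mu_1+(1-t)\mu_2\in\mathcal{M}(z)$ has entropy $t\,h(T,\mu_1)+(1-t)\,h(T,\mu_2)$. Upper semicontinuity uses compactness: $\Prob(T)$ is compact and $\mu\mapsto\mu(\vpot)$ is continuous, so each $\mathcal{M}(z)$ is compact and, entropy being u.s.c., the supremum defining $\hf(z)$ is attained; a subsequence extraction along any $z_n\to z$ then yields $\hf(z)\ge\limsup_n\hf(z_n)$. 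Next, regrouping the supremum defining $\Pf$ according to the value $z=\mu(\vpot)$ gives
\[ \Pf(y)=\sup_{\mu\in\Prob(T)}\big(h(T,\mu)+\langle y;\mu(\vpot)\rangle\big)=\sup_{z\in\rs(\vpot)}\big(\hf(z)+\langle y;z\rangle\big)=\hf^\#(y), \]
so $\hf^\#=\Pf$ with no further hypothesis.

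Now I would observe that $\Pf$ is convex of $C^r$ Legendre type in the sense of Definition~\ref{defi-Legendre}: it is finite on all of $\RR^d$, hence continuous, hence lower semicontinuous; it is convex, strictly convex and $C^r$ on $\inter\dom(\Pf)=\RR^d$, and (for $r\ge2$) has everywhere positive definite Hessian; and the boundary condition Definition~\ref{defi-Legendre}(\ref{item-c2}) is vacuous because $\dom(\Pf)=\RR^d$ has no boundary. Applying Theorem~\ref{theo-duality-CO} to the u.s.c. proper concave function $\hf$ gives $\hf=-(\hf^\#)^*=-\Pf^*$. On the other hand, Theorem~\ref{theo-Legendre} applied to the convex function $\Pf$ of $C^r$ Legendre type asserts that its Legendre transform $\Pf^*$ is again convex of $C^r$ Legendre type. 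Hence $\hf=-\Pf^*$ is concave of $C^r$ Legendre type, establishing the third condition of Definition~\ref{defi-regular}, and $(T,\vpot)$ is $C^r$ Legendre.

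The main obstacle is less a hard step than careful bookkeeping of signs and of the concave-versus-convex dictionary, and making sure the hypotheses on $\Pf$ feed correctly into the cited duality results. In particular, the delicate gradient-blowup condition~(\ref{item-c2}) for $\hf$ is obtained \emph{for free} from Theorem~\ref{theo-Legendre} rather than checked by hand, precisely because $\hf$ is recognized as minus the Legendre transform of the globally smooth function $\Pf$. The only points requiring a genuine, if routine, argument are the upper semicontinuity and concavity of $\hf$ and the validity of regrouping the supremum in the displayed identity $\hf^\#=\Pf$.
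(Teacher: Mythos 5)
Your proof is correct and follows essentially the same route as the paper's: establish that $\hf$ is proper, upper semicontinuous and concave with $\hf^\#=\Pf$, then use Theorem~\ref{theo-duality-CO} to write $-\hf=\Pf^*$ and Theorem~\ref{theo-Legendre} to transfer the $C^r$ Legendre property from $\Pf$ to $\hf$. The only difference is that you spell out the routine verifications (the first two conditions of Definition~\ref{defi-regular}, the regrouping of the supremum, and the vacuity of the boundary condition for $\Pf$) that the paper leaves implicit.
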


\begin{proof}
Since the Kolmogorov-Sinai entropy $h:\Prob\to\RR\cup\{-\infty\}$ is upper semicontinuous, $\Prob$ compact,  and $\vpot$ is continuous,  $\hf:\RR^d\to\RR\cup\{-\infty\}$ is upper semicontinuous. This function is also finite on its nonempty domain $\dom(\hf)=\rs(\pot)$ and concave. Therefore, by Theorem~\ref{theo-duality-CO}, the lower semicontinuous convex function $-\hf$ satisfies: $-\hf=((-\hf)^*)^*=\Pf^*$. By assumption $\Pf$ is a convex $C^r$ Legendre function. Applying now Theorem~\ref{theo-Legendre}, we get that  $\Pf^*=-\hf$ is a convex $C^r$ Legendre function, i.e., $\hf$ is concave $C^r$ Legendre.
\end{proof}

It is now easy to check that many classical systems satisfy the thermodynamical formalism with $C^\omega$ regularity. In many cases, the one point that needs checking is that the rotation set has non-empty interior (see Section \ref{subsec:potts} for an example where it does not).

Recall that a function $\pot$ is \emph{cohomologous to a constant} $c$ if there is a continous function $u$ such that $\pot=c+u-u\circ T$. 

\begin{corollary}
Let $T$ be a mixing subshift of finite type or an Anosov diffeomorphism. Let $\vpot$ be a finite family of H\"older-continuous potentials $\pot_1,\dots,\pot_d:X\to\RR$.
Assume the following independence condition: for all $\alpha_1,\dots,\alpha_d$ not all zero, $\sum_{i=1}^d\alpha_i\pot_i$ is not cohomologous to a constant.

 Then $(T,\vpot)$ is a $C^\omega$ Legendre system with unique linear equilibrium measures.
\end{corollary}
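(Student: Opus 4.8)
The plan is to verify the three clauses of Definition~\ref{defi-regular} together with uniqueness of linear equilibrium measures, routing the Legendre-type condition on $\hf$ through the pressure function via Proposition~\ref{prop:nlem0}. All the input is classical Sinai--Ruelle--Bowen theory for H\"older potentials, which I would invoke in four forms: (a) on a mixing subshift of finite type --- and, after coding by a Markov partition, on an Anosov diffeomorphism --- every H\"older-continuous potential $\psi$ has a unique equilibrium measure; (b) the map $\psi\mapsto P_\top(T,\psi)$ is real-analytic on the Banach space of H\"older potentials (Ruelle); (c) the Hessian of $y\mapsto P_\top(T,y\cdot\vpot)$ at $y$ is the asymptotic covariance matrix of $\vpot$ computed against the equilibrium measure $\nu_y$; (d) for the equilibrium measure of a H\"older potential on such a system, this covariance form applied to $\alpha\cdot\vpot$ vanishes exactly when $\alpha\cdot\vpot$ is cohomologous to a constant (Liv\v sic together with the standard variance criterion).

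Granting these, I would argue as follows. The pressure function $\Pf(y)=P_\top(T,y\cdot\vpot)$ is finite and, being the composition of the analytic functional in (b) with the linear map $y\mapsto y\cdot\vpot$, is $C^\omega$ on $\RR^d$. By (c)--(d) and the independence hypothesis, the Hessian of $\Pf$ is positive definite at every $y$, so $\Pf$ is strictly convex. The entropy map $\mu\mapsto h(T,\mu)$ is bounded (since $h_\top(T)<\infty$) and upper semicontinuous (expansiveness of the shift, respectively of an Anosov diffeomorphism on a basic set). There remains the nonemptiness of $\inter\rs(\vpot)$: the rotation set is a compact convex subset of $\RR^d$, being the image of $\Prob(T)$ under the affine map $\mu\mapsto\mu(\vpot)$, hence it fails to have interior only if it lies in an affine hyperplane $\{z:\alpha\cdot z=c\}$ with $\alpha\ne0$; but then $\mu(\alpha\cdot\vpot)=c$ for all invariant $\mu$, so $S_n(\alpha\cdot\vpot)(x)=nc$ at every $n$-periodic point $x$, and Liv\v sic's theorem yields a (H\"older, hence continuous) transfer function making $\alpha\cdot\vpot$ cohomologous to $c$, contradicting the hypothesis. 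Proposition~\ref{prop:nlem0} then gives that $(T,\vpot)$ is a $C^\omega$ Legendre system, and (a) upgrades this to ``with unique linear equilibrium measures''.

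The only real difficulty is bookkeeping in the Anosov case: one must check that the symbolic coding by a Markov partition sends a H\"older potential on $X$ to a H\"older potential on a mixing subshift of finite type in a way compatible with pressures, equilibrium measures, asymptotic variances, and the cohomology-to-a-constant dichotomy --- or, alternatively, quote the Axiom~A versions of Ruelle's analyticity theorem, the uniqueness theorem and Liv\v sic's theorem directly. A minor related point is to reconcile the different regularities of transfer functions appearing (continuous in the definition of ``cohomologous'', H\"older from Liv\v sic, $L^2$ or continuous in the variance criterion); these are all equivalent for H\"older $g$ on these systems, and that equivalence should be cited rather than reproved.
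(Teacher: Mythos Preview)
Your proposal is correct and follows essentially the same route as the paper: both verify the hypotheses of Proposition~\ref{prop:nlem0} by invoking classical Sinai--Ruelle--Bowen theory (analyticity of pressure, positive definiteness of the Hessian via the variance criterion and Liv\v{s}ic, expansiveness for upper semicontinuity of entropy, and Liv\v{s}ic again for the nonempty interior of the rotation set), and then cite uniqueness of equilibrium measures for H\"older potentials. The only cosmetic difference is that the paper appeals directly to Ruelle's results for Smale systems (covering both cases at once) rather than discussing the Markov coding for the Anosov case separately.
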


\begin{remark}
Livsi\v{c} theorem applies to such systems: a function is cohomologous to a constant if and only if on each periodic orbit, the average of the function is equal to that constant. The independence condition above is therefore equivalent to the existence of $d+1$ periodic orbits with corresponding atomic measures $\mu_0,\dots,\mu_d\in\Prob(T)$ such that $\mu_0(\vpot),\dots,\mu_d(\vpot)\in\RR^d$ are affinely independent.
\end{remark}

\begin{proof}[Proof of the corollary]
Both subshifts of finite type and Anosov diffeomorphisms are Smale systems satisfying the regularity condition (SS3) in \cite{Ruelle-book}  in the sense of \cite[7.1, 7.11]{Ruelle-book} and this will be enough for our purposes.

Since $T$ has finite topological entropy and is expansive, the Kolmogorov-Sinai entropy function is upper semicontinuous and bounded over $\Prob(T)$.

If the rotation set, a convex set, had empty interior, it would be contained in some  affine hyperplane, hence, there would be numbers $\alpha_0,\dots,\alpha_d$, not all zero, such that
 $$
    \forall\mu\in\Prob(T)\quad \mu\left(\sum_{i=1}^d \alpha_i \phi_i\right) =  \sum_{i=1}^d \alpha_i \mu(\phi_i) = \alpha_0.
  $$
By Livsi\v{c} theorem, this  implies that $\sum_{i=1}^d \alpha_i \phi_i$ is cohomologuous to the constant $\alpha_0$, contradicting the independence assumption.

Since $T$ is a topologically mixing Smale system,  its pressure function is real-analytic \cite[7.10]{Ruelle-book}. It has a semidefinite positive Hessian with kernel  generated by the potentials cohomologous to constants. Hence the finite-dimensional pressure function $\Pf$ has definite positive Hessian in all of $\RR^d$ under the independence assumption above. In particular, $\Pf$ is strictly convex.

Thus, the assumptions of Proposition~\ref{prop:nlem0} are satisfied so that $(T,\vpot)$ is a $C^r$ Legendre system.
 
Finally, for each $ y\in\RR^d$, $ y\cdot\vpot$ is H\"older-continuous, hence there exists a unique linear equilibrium measure $\nu_{ y}$.
\end{proof}

The next statement follows immediately from \cite[Corollary B, Theorems F \& G]{GKLM}, providing another family (intersecting the previous one) of dynamical systems to apply our framework to.
We shall say that a Banach space $\mathscr{X}$ of functions $X\to \mathbb{R}$ is a \emph{good Banach algebra of functions} when:
\begin{itemize}
\item $\mathscr{X}$ is stable by product and $\lVert fg\rVert \le \lVert f\rVert \lVert g\rVert$ for all $f,g\in\mathscr{X}$,
\item for every positive, bounded away from $0$ function $f\in\mathscr{X}$, $\log f$ is in $\mathscr{X}$,
\item the norm of $\mathscr{X}$ dominates the uniform norm (in particular the elements of $\mathscr{X}$ are bounded),
\item the composition operator $f\mapsto f\circ T$ is a continuous operator on $\mathscr{X}$,
\item for every equilibrium measure $\mu$ of a potential in $\mathscr{X}$ and every non-negative $f\in\mathscr{X}$, if $\int f\dd\mu = 0$ then $f=0$,
\item every continuous function can be uniformly approximated by elements of $\mathscr{X}$.
\end{itemize}
(These assumptions are numerous, but many Banach spaces satisfy them, such as H\"older spaces or BV space on the interval, see \cite{GKLM} for some discussions of these hypotheses.) We refer to \cite{GKLM} for the notions of $k$-to-$1$ map, simple dominant eigenvalue, and spectral gap appearing in the following statement.

\begin{theorem}\label{thm-classical-are-Legendre}
Assume that $T$ is $k$-to-$1$ and $\pot_1,\dots, \pot_d$ belong to some good Banach algebra of functions $\mathscr{X}$ and that for all $\alpha_1,\dots\alpha_d$ not all zero, $\sum_{i=1}^d \alpha_i\pot_i$ is not cohomologous to a constant. If for all $ y\in\mathbb{R}^d$ the transfer operator defined by  $\mathcal Lf (x) = \sum_{x'\in T^{-1}(x)} e^{ y\cdot\vpot(x')} f(x')$ acts with a simple dominant eigenvalue and a spectral gap on $\mathscr{X}$, then $(T,\vpot)$ is $C^\omega$ Legendre with unique linear equilibrium measures.
\end{theorem}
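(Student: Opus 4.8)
The plan is to check the three hypotheses of Proposition~\ref{prop:nlem0} with $r=\omega$, extracting each one from \cite{GKLM}, and then to read off uniqueness of the linear equilibrium measures from the same results.

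First, since each $\pot_i$ lies in the good Banach algebra $\mathscr{X}$, so does every linear combination $y\cdot\vpot=\sum_i y_i\pot_i$, and the transfer operator $\mathcal{L}_y f(x)=\sum_{x'\in T^{-1}(x)}e^{y\cdot\vpot(x')}f(x')$ depends analytically on $y\in\RR^d$ as a family of bounded operators on $\mathscr{X}$ (the weight $e^{y\cdot\vpot}$ is an analytic $\mathscr{X}$-valued function of $y$, and multiplication and composition with $T$ are continuous on $\mathscr{X}$). Under the assumed simplicity of the dominant eigenvalue and the spectral gap, standard analytic perturbation theory provides a real-analytic branch $y\mapsto\lambda(y)\in(0,+\infty)$, and the Ruelle--Perron--Frobenius analysis of \cite{GKLM} identifies $\log\lambda(y)$ with the topological pressure; hence $\Pf(y)=P_\top(T,y\cdot\vpot)$ is finite and $C^\omega$ on all of $\RR^d$, with $\Pf(0)=h_\top(T)<\infty$. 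The same machinery (Theorems F \& G of \cite{GKLM}) produces, for every $y$, a \emph{unique} equilibrium measure $\nu_y$ for $y\cdot\vpot$, which is one half of what we must prove.

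Next, I would invoke the second-order information of \cite{GKLM} (Corollary B): the Hessian of $\Pf$ at $y$ is the asymptotic covariance matrix of $\vpot$ under $\nu_y$, so it is positive semidefinite, and it is degenerate in a direction $\alpha$ precisely when $\alpha\cdot\vpot$ is cohomologous to a constant. The independence hypothesis forbids this for $\alpha\neq0$, so $\mathrm{Hess}\,\Pf$ is everywhere positive definite and $\Pf$ is strictly convex on $\RR^d$. In particular $\nabla\Pf$ is a local $C^\omega$-diffeomorphism, so its image is open; since $\nabla\Pf(y)=\nu_y(\vpot)\in\rs(\vpot)$ for all $y$, the rotation set $\rs(\vpot)$ contains an open set, hence has nonempty interior.

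Finally, $h_\top(T)=\Pf(0)<\infty$ bounds the Kolmogorov--Sinai entropy on $\Prob(T)$, and its upper semicontinuity over $\Prob(T)$ is part of the thermodynamic framework of \cite{GKLM} (and, more elementarily, holds whenever $T$ is expansive, covering the standard examples). Thus all three bullets of Proposition~\ref{prop:nlem0} are satisfied with $r=\omega$, so $(T,\vpot)$ is a $C^\omega$ Legendre system, and together with the uniqueness of each $\nu_y$ it is $C^\omega$ Legendre with unique linear equilibrium measures. The main obstacle is the strict positive-definiteness of $\mathrm{Hess}\,\Pf$: it rests on the precise cohomological description of its kernel in \cite{GKLM}, after which the independence assumption finishes the argument; a secondary point to double-check is that \cite{GKLM}'s hypotheses indeed yield upper semicontinuity of entropy at the level of generality considered here.
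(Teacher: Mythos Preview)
Your proposal is correct and follows the same route as the paper, which simply records that the theorem ``follows immediately from [Corollary~B, Theorems~F~\&~G]{GKLM}'' without further detail; you have essentially unpacked that citation and fed it into Proposition~\ref{prop:nlem0}, exactly as the paper does in the preceding Corollary for subshifts of finite type and Anosov maps.

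One minor difference worth noting: for the nonempty interior of $\rs(\vpot)$, the paper (in the analogous Corollary) argues by contradiction---if $\rs(\vpot)$ lay in a hyperplane then some nontrivial combination $\sum_i\alpha_i\pot_i$ would integrate to a constant against every invariant measure, forcing it to be cohomologous to a constant---whereas you deduce it from the openness of the image of $\nabla\Pf$. Both arguments are valid; yours has the advantage of not invoking a Liv\v{s}ic-type statement, at the cost of first establishing positive-definiteness of the Hessian. Your caveat about upper semicontinuity of entropy is well placed: the paper does not address it either, tacitly relying on the framework of \cite{GKLM} to supply it.
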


\subsection{Consequences of Legendre duality}

Now that we have seen that Theorem~\ref{theo-Legendre} applies to plenty of dynamical systems, let us note some of the consequences.

\begin{proposition}\label{prop:nlem}
If $(T,\vpot)$ is a $C^r$ Legendre system, then:
\begin{enumerate}
\item the finite-dimensional function $\hf$ is continuous on the rotation set $\rs(\vpot)$,
\item $\grad \hf$ realizes a $C^{r-1}$ diffeomorphism from the interior of $\rs(\vpot)$ onto $\mathbb{R}^d$ with inverse $ y\mapsto\grad \Pf(- y)$,
\item the linear pressure function $\Pf$ has domain $\mathbb{R}^d$ and is $C^r$, 
\item for all $ y\in\mathbb{R}^d$, $\grad \Pf( y) = z_{\mathrm{opt}}$ where $ z_{\mathrm{opt}}$ is the unique maximizer of $\hf( z)+\langle y; z\rangle$ over $\interior\rs(\vpot)$.
\end{enumerate}
If, additionally, $(T,\vpot)$ has unique equilibrium measures $(\nu_y)_{y\in\RR^d}$, then 
\begin{enumerate}[resume]
\item\label{eq-int-ent} for all $y\in\RR^d$, $\nu_{ y}(\vpot)=\nabla\Pf( y)\in\interior(\rs(\vpot))$
and
   $h(T,\nu_{ y})=\hf(\nu_{ y}(\vpot))$, 
\item\label{eq-eq-values} $\{\nu_{ y}(\vpot): y\in\RR^d\} = \interior(\rs(\vpot))$ and
\item\label{eq-rev} conversely, for all $ z\in\interior(\rs(\vpot))$, setting $y:=-\grad \hf(z)$,
    $\nu_{ y}(\vpot) =  z$ and $\nu_{ y}$ is the unique measure of maximum entropy in $\mathcal M( z)$.
\end{enumerate}
\end{proposition}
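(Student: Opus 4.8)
The plan is to deduce everything from Theorem~\ref{theo-Legendre} applied to the concave $C^r$ Legendre function $\hf$, together with the classical identification of linear equilibrium measures. Items (1)--(4) use only that $(T,\vpot)$ is $C^r$ Legendre. For (1): a concave function is continuous on the interior of its domain; at boundary points of $\rs(\vpot)=\dom(\hf)$, upper semicontinuity is part of the definition, and lower semicontinuity follows because $\mathcal M(z)$ is nonempty and weak-star closed for every $z\in\rs(\vpot)$ (the set of invariant measures is compact and $\mu\mapsto\mu(\vpot)$ continuous), so the entropy function, being upper semicontinuous with compact level sets, attains its supremum on each $\mathcal M(z)$ and varies continuously. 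For (2): Theorem~\ref{theo-Legendre}(1) applied to $f=\hf$ states that $\grad\hf$ is a $C^{r-1}$ diffeomorphism from $\inter(\dom(\hf))=\inter(\rs(\vpot))$ onto $\inter(\dom(\hf^\#))$; since $\Pf=\hf^\#$ has domain all of $\RR^d$ (see the discussion preceding the proposition, where $\Pf$ finite follows from $h_\top(T)<\infty$ and boundedness of the potentials), the target is $\RR^d$. The inverse is given by Theorem~\ref{theo-Legendre}(2), $\grad\hf^\#(y)=(\grad\hf)^{-1}(-y)$, i.e. $(\grad\hf)^{-1}(y)=\grad\Pf(-y)$. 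Item (3) is immediate from $\Pf=\hf^\#$ and Theorem~\ref{theo-Legendre}. Item (4): by the second formula in Theorem~\ref{theo-Legendre}(2), for $y\in\inter(\dom(\Pf))=\RR^d$ we have $\Pf(y)=\hf(z)+\langle y;z\rangle$ with $z=(\grad\hf)^{-1}(-y)$... but one must be careful with sign conventions. Let me instead argue directly: $\Pf(y)=\sup_z \hf(z)+\langle y;z\rangle$; since $\hf$ is strictly concave on $\inter(\rs(\vpot))$ and the gradient of $\hf$ blows up at the boundary (condition \eqref{item-c2}), the supremum is attained at a unique interior point $z_{\mathrm{opt}}$ characterized by $\grad\hf(z_{\mathrm{opt}})=-y$; differentiating the value function (envelope theorem, valid by uniqueness and smoothness) gives $\grad\Pf(y)=z_{\mathrm{opt}}$.

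For the last three items, assume unique linear equilibrium measures $(\nu_y)$. The key classical fact to invoke is that for a H\"older-type potential with unique equilibrium measure, the pressure function is differentiable and $\grad_y\,P_\top(T,y\cdot\vpot)=\nu_y(\vpot)$ — this is precisely the statement that the equilibrium measure realizes the derivative of pressure in the potential direction. Since $\Pf(y)=P_\top(T,y\cdot\vpot)$ by definition, combining with item (4) gives $\nu_y(\vpot)=\grad\Pf(y)=z_{\mathrm{opt}}\in\inter(\rs(\vpot))$, the first half of \eqref{eq-int-ent}. For the entropy equality: $\Pf(y)=h(T,\nu_y)+\langle y;\nu_y(\vpot)\rangle$ since $\nu_y$ is the linear equilibrium measure, while also $\Pf(y)=\hf(z_{\mathrm{opt}})+\langle y;z_{\mathrm{opt}}\rangle$ with $z_{\mathrm{opt}}=\nu_y(\vpot)$; subtracting, $h(T,\nu_y)=\hf(\nu_y(\vpot))$. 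Item \eqref{eq-eq-values} is then immediate: $\{\nu_y(\vpot):y\in\RR^d\}=\grad\Pf(\RR^d)=\inter(\rs(\vpot))$ by item (2) (the range of $\grad\Pf$ equals the range of $(\grad\hf)^{-1}$ precomposed with $y\mapsto-y$, which is $\inter(\rs(\vpot))$). For \eqref{eq-rev}, fix $z\in\inter(\rs(\vpot))$ and set $y:=-\grad\hf(z)$; then by item (2), $\grad\Pf(y)=(\grad\hf)^{-1}(-y)=z$, whence $\nu_y(\vpot)=z$ by \eqref{eq-int-ent}, and $h(T,\nu_y)=\hf(z)$ so $\nu_y$ attains the supremum defining $\hf(z)$ over $\mathcal M(z)$; uniqueness of the maximizer follows because any $\mu\in\mathcal M(z)$ with $h(T,\mu)=\hf(z)$ would satisfy $h(T,\mu)+\langle y;\mu(\vpot)\rangle=\hf(z)+\langle y;z\rangle=\Pf(y)$, making $\mu$ a linear equilibrium measure for $y\cdot\vpot$, hence $\mu=\nu_y$ by the uniqueness hypothesis.

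The main obstacle I anticipate is making the derivative-of-pressure identity $\grad\Pf(y)=\nu_y(\vpot)$ rigorous at the level of generality claimed: the definition of $C^r$ Legendre system does not presuppose the transfer-operator machinery, so one cannot simply cite Ruelle's analytic perturbation theory. The cleanest route is to avoid it entirely and derive this identity from convexity alone: $\Pf$ is convex with $\grad\Pf(y)=z_{\mathrm{opt}}$ (item (4)), and separately, for \emph{any} equilibrium measure $\mu$ of $y\cdot\vpot$ one has, for all $w$, $\Pf(w)\ge h(T,\mu)+\langle w;\mu(\vpot)\rangle$ with equality at $w=y$, so $\mu(\vpot)$ is a subgradient of $\Pf$ at $y$; since $\Pf$ is differentiable there, $\mu(\vpot)=\grad\Pf(y)$. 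This uses no uniqueness and no spectral theory — only the variational principle and convex analysis — and is the argument I would write. A secondary care point is the sign bookkeeping in Theorem~\ref{theo-Legendre}(2) (the $(\grad f)^{-1}(-y)$ versus $(\grad f)^{-1}(y)$ issue), which one should pin down once at the start and use consistently.
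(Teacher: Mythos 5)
Your proposal is correct and matches the paper's proof in all essentials: items (1)--(4) are obtained, as in the paper, by applying Theorem~\ref{theo-Legendre} to the concave Legendre function $\hf$ (with $\dom(\Pf)=\RR^d$ because $\hf$ is bounded above on its compact domain), and items (5)--(7) follow from the variational principle combined with uniqueness of the linear equilibrium measures. The only (minor) difference is at item (\ref{eq-int-ent}): you derive $\nu_{y}(\vpot)=\grad\Pf(y)$ by observing that $\nu_{y}(\vpot)$ is a subgradient of the differentiable convex function $\Pf$ at $y$, whereas the paper first notes that $\nu_{y}$ maximizes entropy in $\mathcal M(\nu_{y}(\vpot))$, so that $\Pf(y)=\hf(z)+\langle y;z\rangle$ with $z=\nu_{y}(\vpot)$, forcing $z$ to be the unique maximizer $\grad\Pf(y)$ --- the two arguments are equally elementary and interchangeable, and your subsequent deductions of the entropy identity and of items (\ref{eq-eq-values}) and (\ref{eq-rev}) coincide with the paper's.
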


\begin{proof}
The function $\hf$ is upper-semicontinuous, and since it is concave and finite it must be continuous on its domain, which coincides with the rotation set.

By assumption, $\hf$ is a concave $C^r$ Legendre function. Hence Theorem~\ref{theo-Legendre} ensures that the pressure $\Pf=\hf^\#$ is $C^r$. Since $\hf$ is upper bounded as a continuous function with a compact domain, the domain of $\Pf(y)=\sup_{z\in\rs(\vpot)} \hf(z)+\langle y;z\rangle$ is the whole of $\RR^d$. 
The same theorem tells us that  $\grad \hf$ realizes a $C^r$ diffeomorphism  from the interior of $\rs(\vpot)$ to $\RR^d$, the interior of the domain of $\Pf$,  and that, for all $ y\in\dom(\Pf)$,
 $$
   \grad \Pf( y) = (\grad \hf)^{-1}(- y).
 $$
 We further note that $\Pf( y)=\langle  y\,; z_{\rm opt}\rangle + \hf(z_{\rm opt})$ with
 $
   z_{\mathrm{opt}} := (\grad \hf)^{-1}(- y)=\grad P( y).
 $

We now assume that $(T,\vpot)$ has unique equilibrium measures $(\nu_y)_{y\in\RR^d}$.
Let $ y\in\RR^d$. 

Observe that $\nu_y$ must maximize the entropy in $\mathcal{M}( z)$ where $ z =\nu_{ y}(\vpot)$, hence $h(T,\nu_{ y})=\hf( z)$. By definition the linear pressure is 
 $$
  \Pf( y)=  h(T,\nu_{ y})+\int  y\cdot \vpot\, d\nu_{ y} = \hf( z)+ \langle  y\cdot  z \rangle.
 $$
Therefore, in Proposition~\ref{prop:nlem}, one must have:
 $$
     z = \nu_{ y}(\vpot) = \grad\Pf( y) \text{ so }  y = -\grad\hf( z).
 $$
This proves items (\ref{eq-int-ent})~and~(\ref{eq-rev}).

Note that $\{\nu_{ y}(\vpot): y\in\RR^d\} = \grad\Pf(\RR^d)$, which is $\interior(\rs(\vpot))$, proving (\ref{eq-eq-values}). 
\end{proof}

\begin{figure}
\centering
\includegraphics[scale=1]{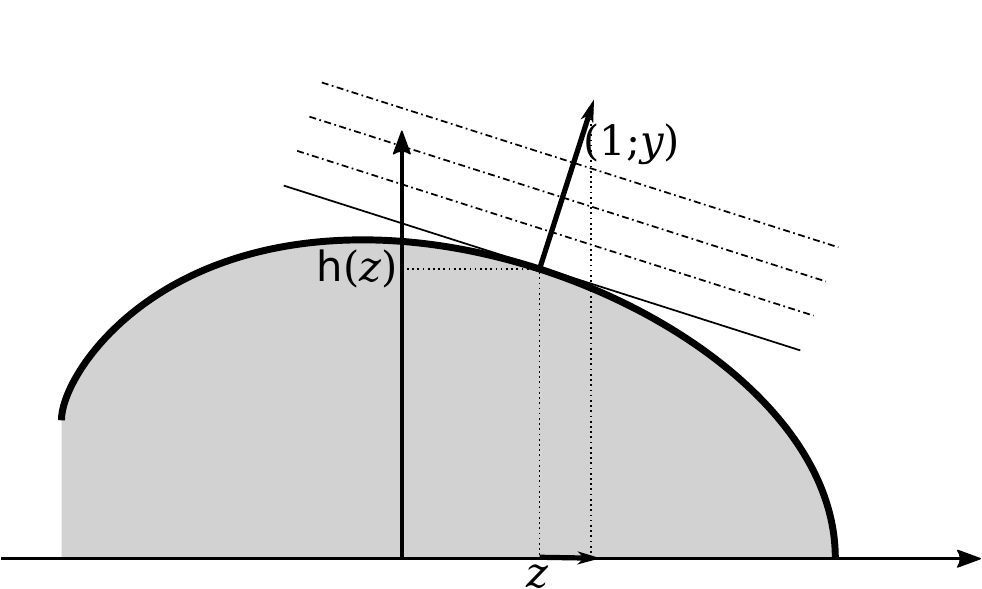}
\caption{An entropy-potential diagram $\mathcal{D}$ represented in {\JB the $d=1$ case} (first coordinate $z_0$ represented by the vertical axis): $\Pf( y)$ is obtained by sliding a line along the normal vector $(1; y)$ until it touches the hypograph of $\hf$, which happens above some $ z$ where $\grad \hf( z) = - y$. At this point $\grad \Pf( y)= z$: changing the direction $ y$ makes the touching line ``roll'' along the upper side of $\mathcal{D}$; this rolling combines the rotation of $y$ and a normal translation given by scalar product with $ z$. {\BK \tiny Changed "variation in the amount of sliding" by a hopefully clearer explanation.}}\label{fig:diagramme1Da}
\end{figure}

\subsection{Set of nonlinear equilibrium measures}

We now identify the fully nonlinear equilibrium measures, that is, the elements of $\EM(T,F,\vpot)$ (or just $\EM$) from Definition~\ref{def-fully-nonlin}. We define the set of \emph{$(G,\vpot)$-equilibrium values} to be
 $$
     \mathscr{V}:=\{\mu(\vpot):\mu\in\EM\}.
 $$ 

For $ z\in \rs(\vpot)$, recall the notations $\mathcal{M}( z)$ 
and $\hf( z)$ 
from Definitions \ref{defi-entropy}~and~\ref{defi-regular}.
We start with Theorem \ref{thm-equilibria}, in a version generalized to fully nonlinear pressures (see Definition~\ref{def-fully-nonlin}).
We recall that $G$ is defined on some open set $V\subset \R\times \R^{d}$ and in the following $\partial_{i}G$ stands for $\partial G/\partial z_{i}$, $i=0,1,\ldots, d$.

\begin{theorem}\label{thm:equilibria-G}
Let $(T, \pot)$ be a {\JB $C^r$ Legendre system} for some $1\leq r\leq\omega$ and let $\Pi^G$ be a fully nonlinear pressure defined by an admissible $C^r$ function $G$. 

Then the set $\EM$ of $(G,\vpot)$-equilibrium measures is a nonempty and compact set of linear equilibrium measures.
More precisely, 
 \begin{enumerate}
   \item\label{enumi:equilibria-G-i} $\mathscr V = \{ z\in\interior(\rs(\vpot)):G(\hf( z); z)\text{ maximal }\}$ is a nonempty compact set on which
     \begin{equation}\label{eq-gradg}
   0 = \grad G + \partial_{0} G \cdot  \grad \hf \qquad \text{ where } \grad := \left(\partial_1,\dots,\partial_d\right).
 \end{equation}
   \item\label{enumi:equilibria-G-ii} $ \EM = \{\nu_{ y} \colon  y \in -\grad \hf(\mathscr{V})\}$.
 \end{enumerate}
\end{theorem}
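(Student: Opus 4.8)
The strategy is to combine the three bullet points (*), (**), (***) announced before Section~4.2 with the convex-analytic results on Legendre duality. The key reduction is that, because $\partial_0 G>0$, maximizing $\FNLP^G(\mu,\vpot)$ over $\Prob(T)$ is equivalent to first maximizing the entropy $h(T,\mu)$ inside each fiber $\mathcal M(z)$, and then maximizing the function $z\mapsto G(\hf(z);z)$ over the rotation set $\rs(\vpot)$. I would begin by making this precise: for any $\mu\in\Prob(T)$ with $\mu(\vpot)=z$, one has $\FNLP^G(\mu,\vpot)=G(h(T,\mu);z)\le G(\hf(z);z)$ since $G$ is increasing in its first argument, with equality exactly when $h(T,\mu)=\hf(z)$. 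Hence $\sup_{\Prob(T)}\FNLP^G = \sup_{z\in\rs(\vpot)} G(\hf(z);z)$, and $\mu\in\EM$ if and only if $\mu(\vpot)\in\mathscr V$ \emph{and} $\mu$ has maximal entropy in its fiber. Existence and compactness of the maximizing set $\mathscr V$ follow from $\hf$ being continuous on the compact set $\rs(\vpot)$ (Proposition~\ref{prop:nlem}(1)) and $G$ continuous, so $z\mapsto G(\hf(z);z)$ is continuous on the compact rotation set.

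The main work is then point (**): showing $\mathscr V\subset\interior(\rs(\vpot))$. This is where the $C^r$ Legendre hypothesis on $\hf$ is used crucially, specifically condition~(\ref{item-c2}) in Definition~\ref{defi-Legendre}: the gradient of $\hf$ blows up at the boundary of $\rs(\vpot)=\dom(\hf)$. I would argue that if $z_0\in\partial\rs(\vpot)$ were a maximizer, then moving slightly inward from $z_0$ in a well-chosen direction, the entropy term $\hf$ gains an amount of order $|\grad\hf|\to\infty$ times the displacement, which dominates the $O(\text{displacement})$ change coming from the $z$-dependence of $G$ (here $G$ is $C^r$ hence locally Lipschitz near the compact set, and its first argument stays in a bounded set since $\hf$ is bounded). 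More carefully: since $\hf$ is concave and $C^1$ on $\interior\rs(\vpot)$, for $z$ near $z_0$ inside the rotation set and $v$ a unit inward direction, the directional derivative $\langle\grad\hf(z);v\rangle$ can be made arbitrarily large as $z\to z_0$ by (\ref{item-c2}) and concavity forcing the gradient to point suitably inward; meanwhile $\partial_0 G$ is bounded below away from $0$ on the relevant compact region, so $\frac{d}{dt}G(\hf(z+tv);z+tv) = \partial_0 G\cdot\langle\grad\hf;v\rangle + \langle\grad_z G;v\rangle \to +\infty$, contradicting maximality at $z_0$. This is the step I expect to be the main obstacle, as it requires care about uniformity of the bounds near the boundary and the interplay between the blow-up of $\grad\hf$ and the direction toward the interior.

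With $\mathscr V\subset\interior(\rs(\vpot))$ established, the first-order condition eq.~\eqref{eq-gradg} is immediate: at an interior maximizer $z$ of the $C^r$ function $z\mapsto G(\hf(z);z)$ (recall $\hf$ is $C^r$ on the interior by the Legendre hypothesis and $G$ is $C^r$), the chain rule gives $0 = \grad_z\big(G(\hf(z);z)\big) = \partial_0 G(\hf(z);z)\cdot\grad\hf(z) + \grad G(\hf(z);z)$, which is exactly \eqref{eq-gradg}. This proves item~(\ref{enumi:equilibria-G-i}).

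For item~(\ref{enumi:equilibria-G-ii}), I would invoke point (***), supplied by Proposition~\ref{prop:nlem}: for each $z\in\interior(\rs(\vpot))$, setting $y:=-\grad\hf(z)$, the measure $\nu_y$ is the unique linear equilibrium measure for $y\cdot\vpot$, it satisfies $\nu_y(\vpot)=z$, and it is the unique measure of maximal entropy in $\mathcal M(z)$ (Proposition~\ref{prop:nlem}(\ref{eq-rev})). Combining with the fiberwise characterization of $\EM$ from the first paragraph: $\mu\in\EM$ iff $\mu(\vpot)=z\in\mathscr V$ and $h(T,\mu)=\hf(z)$; but the latter forces $\mu=\nu_{-\grad\hf(z)}$ by uniqueness of the maximal-entropy measure in $\mathcal M(z)$. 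Hence $\EM = \{\nu_y : y\in -\grad\hf(\mathscr V)\}$, and each such $\nu_y$ is by construction a linear equilibrium measure; the set is nonempty (since $\mathscr V\ne\varnothing$) and compact (as the continuous image $-\grad\hf(\mathscr V)$ of a compact set, and $y\mapsto\nu_y$ is weak-$*$ continuous when linear equilibrium measures are unique — or one argues compactness directly from upper semicontinuity of entropy as in Lemma~\ref{lemm-existence}). This completes the proof, and Theorem~\ref{thm-equilibria} follows as the special case $G(z_0;z)=z_0+F(z)$.
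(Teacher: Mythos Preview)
Your proposal is correct and follows essentially the same route as the paper: the fiberwise reduction via $\partial_0 G>0$, compactness from continuity of $z\mapsto G(\hf(z);z)$, the boundary exclusion using the blow-up of $|\grad\hf|$, the chain rule for \eqref{eq-gradg}, and Proposition~\ref{prop:nlem} for item~(\ref{enumi:equilibria-G-ii}). The one place where the paper is sharper is the boundary step: where you invoke ``concavity forcing the gradient to point suitably inward,'' the paper makes this precise by observing that the tangent hyperplane to $\mathcal D$ at $(\hf(z_0+t\vec u);z_0+t\vec u)$ has normal $(1;-\grad\hf)$, so as $|\grad\hf|\to\infty$ its limit is vertical of the form $\mathbb{R}\times L$ with $L$ a supporting hyperplane of $\rs(\vpot)$ at $z_0$, whence the angle between any inward $\vec u$ and $L$ is bounded away from zero and $\langle\grad\hf,\vec u\rangle\ge c|\grad\hf|\to\infty$.
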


\begin{proof}
We prove assertions (\ref{enumi:equilibria-G-i}) and (\ref{enumi:equilibria-G-ii}), the rest being immediate consequences.

Let us note that a measure $\mu\in\Prob(T)$ is a fully nonlinear equilibrium measure if and only if
 $$
     G(h(T,\mu);\mu(\vpot)) = \sup_{(z_0; z)\in\mathcal D} G(z_0; z)
      = \sup_{ z\in\rs(\vpot)} g( z) \text{ where }g( z):=G(\hf( z); z).
 $$
Indeed, the first equality follows from the definitions and the second one 
follows from the fact that $z_0\mapsto G(z_0; z)$ is increasing for each $ z\in\rs(\vpot)$. Since $g$ is continuous on the compact set $\rs(\vpot)$, it follows that $\mathscr V$ is itself compact. 

\begin{claim*}
Since $h$ is concave with $\lvert \grad\hf \rvert\to\infty$ at the boundary of $\rs(\vpot)$, we must have $\mathscr V\subset\inter(\rs(\vpot))$.
\end{claim*}

\begin{proof}[Proof of the claim]
Consider a point $z_0$ on the boundary of $\rs(\vpot)$, and let us prove that it cannot maximize $g$. Let $\vec u$ be any vector such that $z_0+\vec u\in\inter(\rs(\vpot))$ and consider the function defined on $[0,1]$ by $f(t) = \hf(z_0+t\vec u)$. By concavity its derivative has a limit, finite or infinite, as $t\to0$. For all {\JB small enough} $t>0$, we have $f'(t) = \langle\grad\hf(z_0+t\vec u),\vec u\rangle$. We know that $\lvert\grad \hf\rvert\to\infty$ at the boundary, but it could {\it a priori} be that $\grad \hf$ becomes orthogonal to $\vec u$ as $t\to0$; we now prove that this cannot be the case.

At each {\JB small enough} $t>0$, the tangent space $H_t$ to the upper boundary of $\mathcal{D}$ has $(1,-\grad\hf)$ as normal vector. As $t\to0$, $\lvert \grad\hf\rvert\to\infty$ so that any accumulation point $H_0$ of $H_t$ is vertical, of the form $\mathbb{R}\times L$ where $L$ is a hyperplane of $\mathbb{R}^d$ (normal to an accumulation point of the direction of $\grad\hf(z_0+t\vec u)$). Since $\mathcal{D}$ is contained in a half-space delimited by $H_0$, $L$ must be a supporting hyperplane of $\rs(\vpot)$ at $z_0$. Since $\vec u$ has been chosen pointing to the interior of $\rs(\vpot)$, the angle between $\vec u$ and $L$ is bounded away from $0$. It follows that for some constant $c>0$ and all $t>0$, $\langle\grad\hf(z_0+t\vec u),\vec u\rangle\ge c\lvert \grad\hf(z_0+t\vec u)\rvert \lvert \vec u\rvert\to\infty$.

We deduce that $f'(t)\to+\infty$ as $t\to0$. Since $\partial_0G>0$, it is bounded away from $0$ on the segment of endpoints $z_0$ and $z_0+\vec u$ and it follows that $g(z_0+t \vec u)-g(z_0)\gg t$ as $t\to0$. In particular there exists $t>0$ such that $g(z_0+t \vec u)>g(z_0)$.
\end{proof}

It follows that $\grad g = 0$ on $\mathscr V$. Now, 
  $$
      \grad g = \grad G + \frac{\partial G}{\partial z_0} \grad \hf 
  $$
and eq.~\eqref{eq-gradg} follows and assertion~(\ref{enumi:equilibria-G-i}) is established.

Let $\nu\in\EM$. The above remarks show that $\nu$ maximizes the entropy in $\mathcal M(z)$ where  $ z:=\nu(\vpot)$. By Proposition~\ref{prop:nlem}, this implies that $\nu=\nu_{ y}$ where $ y:=-\grad\hf( z)$, yielding the inclusion
 $$
    \EM \subset \{\nu_{ y}: y\in -\grad\hf(\mathscr V)\}.
 $$
To check the converse inclusion, let $ z\in\mathscr V$ and apply Proposition~\ref{prop:nlem}. Setting $y:=-\grad\hf( z)$ so $ z:=\grad\Pf( y)$, we get $G(h(T,\nu_{ y});\nu_{ y}(\vpot))=g( z)$ which is maximum since $ z\in\mathscr V$. Hence $\nu_{ y}\in\EM$. Assertion~(\ref{enumi:equilibria-G-ii}) is established.
\end{proof}

\begin{remark}
The value $\max_{\Prob(T)} \FNLP$ is a generalization of our previous definition of nonlinear pressure. Of course, one could decide to study the variational principle for full general $G$ without any restriction. Nevertheless we point out that:
\begin{enumerate}
\item Assumption $\inf\partial_0 G>0$ is crucial: a change of sign would modify the nature of the problem,
\item the case $G(z_0; z) = z_0+F( z)$ is of particular interest: in the classical variational principle, the term $h(T,\mu)$ comes from the summation over $(\eps,n)$-covers in the Gibbs measures (see Formula \eqref{eq:NLP}), and there is at the moment no candidate to replace this summation and define a \emph{topological} pressure in the case of a general $G$.
\end{enumerate}
\end{remark}

To state our next result, we recall that a \emph{subvariety} of an open set $W\subset\RR^d$ is a subset defined by finitely many functions $h_1,\dots,h_k\in C^r(W)$ as $\{x\in W: h_1(x)=\dots=h_k(x)\}$. If $r=\omega$, it is easy to see that any nontrivial subvariety has zero Lebesgue measure (see, e.g., \cite{Mityagin} for a simple proof).

The previous theorem implies the following, which in particular contains Theorem \ref{thm-finiteness}.

\begin{corollary}
Let $(T, \vpot)$ be a $C^\omega$ Legendre system and $G$ be a $C^\omega$ admissible function defined on an open set $V\subset \mathbb{R}^{1+d}$. Then the set $\mathscr{V}$ of $(G,\vpot)$-equilibrium values is a compact subset of an analytic sub-variety of $\mathbb{R}^d$.

In particular, it is a closed set with empty interior which is Lebesgue negligible. 
\end{corollary}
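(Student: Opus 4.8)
The plan is to realize $\mathscr{V}$ as a subset of the zero set, inside the open set $W:=\interior(\rs(\vpot))$, of the $d$ real-analytic functions furnished by the Euler--Lagrange equation~\eqref{eq-gradg}, and then to rule out the degenerate possibility that these functions vanish identically on $W$.

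First I would invoke Theorem~\ref{thm:equilibria-G}: $\mathscr{V}$ is a nonempty compact subset of $W$, equal to the set of maximizers over $\rs(\vpot)$ of $z\mapsto g(z):=G(\hf(z);z)$, and on $\mathscr{V}$ the identity $0=\grad G+\partial_0 G\cdot\grad\hf$ holds, its left-hand side being $\grad g$ by the chain rule. Since $(T,\vpot)$ is $C^\omega$ Legendre, $\hf$ is real-analytic on $W$ (Definition~\ref{defi-Legendre}(\ref{item-c1})), and since $G$ is $C^\omega$, the function $g$ --- already used in the proof of Theorem~\ref{thm:equilibria-G} --- is real-analytic on $W$, as are the $d$ components $h_j(z):=\partial_j G(\hf(z);z)+\partial_0 G(\hf(z);z)\,\partial_j\hf(z)$ of $\grad g$. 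I would then set $Z:=\{z\in W:h_1(z)=\dots=h_d(z)=0\}$, an analytic subvariety of the open set $W\subset\mathbb{R}^d$; by~\eqref{eq-gradg}, $\mathscr{V}\subseteq Z$, so $\mathscr{V}$ is a compact subset of $Z$. If $Z$ is nontrivial --- that is, if some $h_j$ does not vanish identically on $W$, which is connected because $\rs(\vpot)$ is convex --- then the zero set of that $h_j$ in $W$ is Lebesgue-negligible by \cite{Mityagin}, hence so are $Z$ and a fortiori $\mathscr{V}$; since a Lebesgue-null set has empty interior and $\mathscr{V}$ is closed, all the stated conclusions follow.

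The crux --- and the main obstacle --- is showing that $Z$ is nontrivial. I would argue by contradiction: if $\grad g\equiv 0$ on the connected open set $W$, then $g$ is constant there, say $g\equiv c$; choosing $z^*\in\mathscr{V}$ (nonempty) gives $c=g(z^*)=\max_{\rs(\vpot)}g$, so \emph{every} point of $W$ is a maximizer and hence $\mathscr{V}=W$. But $W=\interior(\rs(\vpot))$ is nonempty (Definition~\ref{defi-regular}(\ref{item-interior})), open, and bounded (because $\rs(\vpot)$ is compact), so it is not closed in $\mathbb{R}^d$, contradicting the compactness of $\mathscr{V}$ from Theorem~\ref{thm:equilibria-G}. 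In short, the compactness of $\mathscr{V}$ is exactly what prevents the relations~\eqref{eq-gradg} from being trivially satisfied throughout $\interior(\rs(\vpot))$; everything else is routine.
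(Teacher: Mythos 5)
Your proof is correct and follows essentially the same route the paper intends: the corollary is stated there as an immediate consequence of Theorem \ref{thm:equilibria-G}, whose equation \eqref{eq-gradg} places $\mathscr{V}$ inside the zero set of the real-analytic $\grad g$ on $\interior(\rs(\vpot))$, combined with the preceding remark (via \cite{Mityagin}) that a nontrivial analytic subvariety is Lebesgue negligible. Your explicit exclusion of the degenerate case $\grad g\equiv 0$ --- using the compactness of $\mathscr{V}$ from Theorem \ref{thm:equilibria-G} against the non-compactness of the nonempty bounded open set $\interior(\rs(\vpot))$ --- fills in a point the paper leaves implicit (one could equivalently invoke the Claim in the proof of Theorem \ref{thm:equilibria-G} that boundary points never maximize $g$), and it is a valid way to close that small gap.
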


Since a proper analytic sub-variety of a compact line segment is finite:

\begin{corollary}\label{cor-finite-1D}
Let $(T,\pot)$ be a $C^\omega$ Legendre system and $G$ be a $C^\omega$ admissible with  $d=1$, then the set $\EM$ of equilibrium measures is finite.
\end{corollary}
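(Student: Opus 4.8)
The plan is to deduce this from the preceding corollary together with Theorem~\ref{thm:equilibria-G}. The preceding corollary already locates $\mathscr{V}$ inside an analytic subvariety of $\RR^d$; when $d=1$ this subvariety sits in an open subset of the real line, and a real-analytic subvariety of $\RR$ that is not the whole space is locally finite. So once we check that the relevant subvariety is proper, finiteness of $\mathscr{V}$ follows, and finiteness of $\EM$ is then immediate from the description $\EM=\{\nu_y : y\in-\grad\hf(\mathscr{V})\}$.

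Concretely: since $(T,\pot)$ is $C^\omega$ Legendre, Definition~\ref{defi-regular} guarantees that the rotation set $\rs(\pot)$ has nonempty interior, i.e.\ it is a nondegenerate compact interval $[a,b]$ with $a<b$, and that $\hf$ is real-analytic on $(a,b)=\interior(\rs(\pot))$. Hence $g(z):=G(\hf(z);z)$ is real-analytic on $(a,b)$. By Theorem~\ref{thm:equilibria-G}(\ref{enumi:equilibria-G-i}), $\mathscr{V}$ is a nonempty compact subset of $(a,b)$ on which eq.~\eqref{eq-gradg} holds, i.e.\ $g'\equiv 0$ on $\mathscr{V}$. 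The point is that $g$ is not constant on $(a,b)$: otherwise every point of $(a,b)$ would maximize $g$ over $\rs(\pot)$ and so $\mathscr{V}$ would equal the whole open interval $(a,b)$, contradicting compactness. (Equivalently, the Claim in the proof of Theorem~\ref{thm:equilibria-G} already shows that $g$ increases strictly as one moves from a boundary point of $\rs(\pot)$ into the interior, so $g$ cannot be constant.) Therefore $g'$ is real-analytic and not identically zero on $(a,b)$, hence has no accumulation point of zeros inside $(a,b)$; since $\mathscr{V}$ is a compact subset of $(a,b)$ contained in the zero set of $g'$, it is finite.

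Finally, Theorem~\ref{thm:equilibria-G}(\ref{enumi:equilibria-G-ii}) gives $\EM=\{\nu_y : y\in-\grad\hf(\mathscr{V})\}$, the image of the finite set $-\grad\hf(\mathscr{V})$ under $y\mapsto\nu_y$; hence $\EM$ is finite.

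I expect no genuine obstacle here: all the substance is already contained in Theorem~\ref{thm:equilibria-G} and in the preceding corollary (whose ``empty interior'' clause is precisely the non-constancy of $g$). The only point that must be spelled out is that in dimension one the maximizer set $\mathscr{V}$, being compact and sitting inside the open interval $\interior(\rs(\pot))$, meets the discrete zero set of a nonzero real-analytic function in only finitely many points.
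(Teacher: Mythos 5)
Your argument is correct and follows essentially the same route as the paper: the paper deduces the corollary from Theorem~\ref{thm:equilibria-G} and the preceding corollary via the one-line observation that a proper analytic subvariety of a compact line segment is finite, exactly as you do by noting that $\mathscr{V}$ is a compact subset of $\interior(\rs(\pot))$ contained in the zero set of the real-analytic function $g'$. Your explicit verification that $g'$ is not identically zero (via the boundary Claim in the proof of Theorem~\ref{thm:equilibria-G}) is a point the paper leaves implicit, and it is a worthwhile clarification rather than a deviation.
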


In full generality, we have a \emph{generic} uniqueness:

\begin{proposition}\label{p-generic-unique}
Let $(T, \vpot)$ be a Legendre $C^r$ regular system for some $2\leq r\leq\omega$. There is a unique nonlinear equilibrium measure in both of the following settings:
\begin{enumerate}
 \item\label{enumi:generic1} For $G$ in some open and dense subset of $\{G\in C^r(V):\partial_0 G>0\}$ where $V$ is a given admissible open subset of $\RR\times\RR^d$;
 \item\label{enumi:generic2} For $G(z_0;z)=z_0+F(z)$ with $F$ in some open and dense subset of $C^r(U)$ where $U$ is a given open neighborhood of $\rs(\vpot)$ in $\RR^d$.
\end{enumerate}
\end{proposition}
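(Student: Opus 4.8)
The plan is to use Theorem~\ref{thm:equilibria-G} to reduce the statement to a genericity question about a finite-dimensional maximization problem. Writing $g(z):=G(\hf(z);z)$ in setting~(\ref{enumi:generic1}) and $g(z):=\hf(z)+F(z)$ in setting~(\ref{enumi:generic2}), that theorem tells us that the map $z\mapsto\nu_{-\grad\hf(z)}$ carries the set $\mathscr V=\{z\in\interior(\rs(\vpot)):g(z)\text{ maximal}\}$ \emph{onto} $\EM$, so the nonlinear equilibrium measure is unique as soon as $g$ has a unique maximizer; moreover the Claim inside the proof of that theorem shows that the maximum of $g$ over the compact set $\rs(\vpot)$ is attained and lies in $\interior(\rs(\vpot))$, using $\partial_0 G>0$ and $\lvert\grad\hf\rvert\to\infty$ at the boundary of $\rs(\vpot)$. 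Thus it suffices to prove that $g$ has a unique maximizer for the data $G$ (resp.\ $F$) ranging over an open dense set; I would in fact prove the formally stronger statement that the data for which $g$ has a unique \emph{nondegenerate} interior maximizer form an open dense set, the nondegeneracy being exactly what the openness part needs and being meaningful because $r\ge2$ forces $\hf$, hence $g$, to be $C^2$ on $\interior(\rs(\vpot))$.

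For density, given data $G_0$ (resp.\ $F_0$) I would add the affine-in-$z$ function $z\mapsto\langle a,z\rangle$, for $a\in\RR^d$ arbitrarily small: this is a $C^r$-small, admissibility-preserving perturbation (it does not change $\partial_0 G$), and it replaces $g$ by $g_a:=g_0+\langle a,\cdot\rangle$. Two measure-zero arguments then finish the job. First, $a\mapsto M(a):=\max_{\rs(\vpot)}g_a$ is convex (a supremum of functions affine in $a$), hence differentiable off a Lebesgue-null set, and every maximizer of $g_a$ belongs to $\partial M(a)$; so wherever $M$ is differentiable the maximizer of $g_a$ is unique. Second, $z$ is a degenerate critical point of $g_a$ precisely when $\grad g_0(z)=-a$ and the Hessian of $g_0$ at $z$ is singular, i.e.\ when $-a$ is a critical value of the $C^{r-1}$ map $\grad g_0\colon\interior(\rs(\vpot))\to\RR^d$; by Sard's theorem this set of $a$ is Lebesgue-null, so for a.e.\ $a$ the function $g_a$ is Morse. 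Intersecting the two full-measure sets of $a$ and recalling that the maximum is attained in the interior, for a.e.\ small $a$ the function $g_a$ has a unique nondegenerate interior maximizer, which gives density.

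For openness, suppose $g_0$ has a unique nondegenerate maximizer $z^\ast\in\interior(\rs(\vpot))$. I would fix a small closed ball $W\ni z^\ast$ contained in $\interior(\rs(\vpot))$ on which the Hessian of $g_0$ stays negative definite, and a gap $\delta>0$ with $\sup_{\rs(\vpot)\setminus W}g_0<g_0(z^\ast)-\delta$. For admissible data $C^r$-close to $G_0$ (resp.\ $F_0$), $g$ is $C^2$-close to $g_0$, so its Hessian remains negative definite on $W$ (hence $g$ has at most one critical point there) while $\sup_{\rs(\vpot)\setminus W}g<g_0(z^\ast)-\delta/2$; since the maximum of $g$ over $\rs(\vpot)$ is attained in $\interior(\rs(\vpot))$ and exceeds $g_0(z^\ast)-\delta/2$ (evaluate $g$ near $z^\ast$), it is attained inside $W$, at the unique critical point of $g$ there, which is therefore the unique, nondegenerate, global maximizer.

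Combining the two parts, in each setting the data for which $g$ has a unique maximizer contain an open dense set, and for such data Theorem~\ref{thm:equilibria-G}(\ref{enumi:equilibria-G-ii}) yields a unique nonlinear equilibrium measure. The step I expect to be the main obstacle is openness, and inside it the point that the global maximizer of $g$ cannot drift to the boundary of $\rs(\vpot)$ under perturbation — precisely where one reuses the divergence $\lvert\grad\hf\rvert\to\infty$ built into the $C^r$ Legendre hypothesis. A minor but necessary bookkeeping point is to equip $\{G\in C^r(V):\partial_0G>0\}$ (resp.\ $C^r(U)$) with the compact-open $C^r$ topology, which is harmless since only the behaviour of the data near the compact graph $\{(\hf(z);z):z\in\rs(\vpot)\}$ enters the argument.
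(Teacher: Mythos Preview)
Your argument is correct and complete; it differs from the paper's in a way worth noting.

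The paper argues as follows. It first restricts to the open sets $\{G:\partial_0G>1/C,\;\lvert\grad G\rvert<C\}$ (which exhaust the admissible cone), observes that on such a set every critical point of $g$ lies in the fixed compact $K:=\{z:\lvert\grad\hf(z)\rvert\le C^2\}$, and then invokes the standard Morse--theory fact that the set of functions which are \emph{nonresonant Morse} on a given compact is open and dense in $C^r$. Openness follows from continuity of $G\mapsto g$; density is obtained by adding to $G$ a small function $k(z_1,\dots,z_d)$ of $z$ only.

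Your route avoids the black box. For density you perturb only by linear forms $\langle a,z\rangle$ and combine two measure--zero arguments: differentiability a.e.\ of the convex value function $a\mapsto\max_{\rs(\vpot)}(g_0+\langle a,\cdot\rangle)$ gives uniqueness of the maximizer for a.e.\ $a$, and Sard's theorem for the $C^{r-1}$ map $\grad g_0$ gives Morse for a.e.\ $a$. For openness you use a direct gap/Hessian stability argument. This is more self--contained (Sard replaces the citation to \cite{Nicolaescu}) and slightly sharper in that you only enforce a \emph{unique nondegenerate maximizer}, not the full nonresonant Morse property. Conversely, the paper's compact $K$ and its nonresonant Morse condition make the openness step a one--liner, whereas you must argue explicitly that the global maximizer cannot escape to $\partial\rs(\vpot)$ under perturbation --- exactly the point you flag, and which is handled by the Claim in Theorem~\ref{thm:equilibria-G}.
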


Claim (\ref{enumi:generic2}) above means that, for a generic nonlinearity $F$, there is a unique nonlinear equilibrium measure. It is not implied by the fully nonlinear case (\ref{enumi:generic1}) since the corresponding set of $G$s has empty interior. It would be interesting to determine conditions on a fixed non-linearity $F$ or $G$ under which a generic $\vpot$ leads to a unique equilibrium measure

\medbreak

In higher dimension $d\ge 2$, we do not know any example with $C^\omega$ regularity where finiteness does not hold. Beyond the real analytic case, even finiteness fails to hold for arbitrary nonlinearity:

\begin{proposition}\label{prop-flexibility}
Let $(T, \vpot)$ be a {\JB $C^r$ Legendre system} for some $2\leq r\leq\infty$.
For all compact $E\subset\interior \rs(\vpot)$, there exists a $C^r$ nonlinearity $F$ such that the set of equilibrium values $\mathscr{V}$ equals $E$. In particular the set of equilibrium measures can be infinite, even uncountable.
\end{proposition}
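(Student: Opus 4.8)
The plan is to apply Theorem~\ref{thm:equilibria-G} to the fully nonlinear pressure given by $G(z_0;z) = z_0 + F(z)$: here $\partial_0 G \equiv 1 > 0$, so $G$ is admissible, and $C^r$ as soon as $F$ is, and the theorem identifies
$$ \mathscr{V} = \Big\{ z\in\interior(\rs(\vpot)) \colon \hf(z)+F(z) = \max\nolimits_{\rs(\vpot)}(\hf + F) \Big\}. $$
So it is enough to produce a $C^r$ function $F$ on $\RR^d$ (which is then a legitimate $C^r$ nonlinearity, with domain $U=\RR^d\supset\rs(\vpot)$) such that $\hf + F \le 0$ on $\rs(\vpot)$ with equality exactly on $E$. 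The target shape of $F$ is: near $E$ it equals $-\hf$ up to an additive constant, so that $\hf+F$ is flat there; a nonnegative bump vanishing precisely on $E$ then makes $E$ the exact maximum locus; and far from $E$ a cutoff both keeps $F$ globally $C^r$ --- recall that $\hf$ is $C^r$, with gradient blowing up, only on the \emph{open} rotation set --- and pushes $\hf+F$ strictly below $0$, in particular near $\partial\rs(\vpot)$, so that no rival maximizers appear.

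Concretely, I would fix a compact $\overline W$ with $E\subset\interior(W)$ and $\overline W\subset\interior(\rs(\vpot))$; since $(T,\vpot)$ is $C^r$ Legendre, $\hf$ is $C^r$ on a bounded open set $\Omega$ with $\overline W\subset\Omega$ and $\overline\Omega\subset\interior(\rs(\vpot))$. Choose $\chi\in C^\infty(\RR^d;[0,1])$ with $\chi\equiv 1$ on a neighbourhood of $E$ and $\supp\chi\subset W$, and (a classical construction, e.g.\ Whitney) a function $\psi\in C^\infty(\RR^d;[0,\infty))$ with $\psi^{-1}(0) = E$. With $M := h_\top(T)+1$, set
$$ F := -\chi\,\hf - \psi - M(1-\chi), $$
the product $\chi\,\hf$ being read as $0$ wherever $\chi$ vanishes. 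This $F$ is $C^r$ on all of $\RR^d$: on $\Omega$ it is a combination of $C^r$ functions, on $\RR^d\setminus\supp\chi$ it equals the $C^r$ function $-\psi-M$, these agree on the overlap $\Omega\setminus\supp\chi$ (where $\chi\equiv0$), and $\Omega\cup(\RR^d\setminus\supp\chi)=\RR^d$ since $\supp\chi\subset\Omega$.

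Then on $\rs(\vpot)$ --- where $\hf$ is finite --- one computes $\hf + F = (1-\chi)(\hf - M) - \psi$. Since $1-\chi\ge 0$, since $\hf - M \le h_\top(T) - M = -1 < 0$ (the variational principle for topological entropy, using $h_\top(T)<\infty$), and since $\psi\ge 0$, this quantity is $\le 0$ throughout $\rs(\vpot)$, and it vanishes at $z$ iff $\chi(z)=1$ and $\psi(z)=0$, i.e.\ iff $z\in E$; conversely it is identically $0$ on $E$. Hence $\max_{\rs(\vpot)}(\hf+F)=0$ is attained exactly on $E\subset\interior(\rs(\vpot))$, and Theorem~\ref{thm:equilibria-G} gives $\mathscr{V} = E$. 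Finally, since $\mathscr{V}=\{\mu(\vpot):\mu\in\EM\}$ by definition, the map $\mu\mapsto\mu(\vpot)$ sends $\EM$ onto $E$; so if $E$ is infinite (resp.\ uncountable), then $\EM$ is infinite (resp.\ uncountable).

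The main obstacle is not conceptual but a gluing issue: the naive choice $F=-\hf+\mathrm{const}$ is defined, and $C^r$, only on $\interior(\rs(\vpot))$ and has unbounded gradient at its boundary, which is precisely why one localises it by a cutoff supported strictly inside the rotation set and caps $F$ from below elsewhere; one also needs a $C^r$ (indeed $C^\infty$) function vanishing on exactly the prescribed compact set $E$, which is standard. With these two ingredients in hand, the remaining inequalities are immediate.
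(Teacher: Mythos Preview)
Your proof is correct and follows essentially the same approach as the paper's: both take a $C^\infty$ function $\psi$ (the paper's $f$) vanishing exactly on $E$, set $F$ to agree with $-\hf-\psi$ near $E$, and patch with a constant away from the rotation set so that $\hf+F$ is maximised precisely on $E$. The paper's version is a sketch (``let $F$ be $-1$ outside $\interior\rs(\vpot)$, coincide with $-f-\hf$ on a compact subset\dots\ and be lesser than $-\hf$ in between; such a function exists''), whereas you write down the interpolation explicitly via the cutoff $\chi$ and verify the $C^r$ regularity and the inequality $(1-\chi)(\hf-M)-\psi\le 0$ carefully, which is a welcome improvement in precision.
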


Before proving these two propositions, we recall some well-known facts about Morse functions. Given any open subset $U\subset\RR^d$,  a function $g\in C^r(U)$ with $2\leq r\leq\omega$ is Morse on $K\subset U$  if no critical point in $K$ is degenerate and it is nonresonant if it takes distinct values at each of its critical points in $K$ \cite[Def. 1.1.7 and 1.2.11]{Nicolaescu}. In particular, it has at most one maximizer on $K$. Finally, the set of nonresonant Morse $C^r$ functions on a compact set is open and dense (see the proofs in \cite[Sect. 1.2]{Nicolaescu}). This is to be understood with respect to the classical uniform topologies on $C^r(U)$ with finite $r$, or the limit topology for $C^\infty(U)$, or the more complicated standard topology of $C^\omega(U)$ (see, e.g., \cite[p. 53]{Krantz-Parks}).

\begin{proof}[Proof of Proposition \ref{p-generic-unique}]
We prove Claim (\ref{enumi:generic1}). The proof of Claim (\ref{enumi:generic2}) is entirely similar.
Note that it is enough to prove the claim under the auxiliary assumptions $\partial_0 G>1/C$ and $\lvert \grad G\rvert < C$ for $C>0$ arbitrary. 

First, note that $0=\grad g$ implies that $\lvert\grad h\rvert \leq C^2$. Hence, it is enough to ensure that $G$ is nonresonant Morse on the compact subset:
 $$
   K:=\{z\in V: \lvert\grad\hf\rvert\leq C^2\}.
 $$
Second, observe that $G\mapsto g$ is continuous from $C^r(V)\to C^r(\interior\rs(\vpot))$.
Therefore the set $\mathcal G$ of $G\in C^r(V)$ such that $g$  is nonresonant and Morse  on $K$ is open.

Third, given any $g\in C^r(V)$, the map $k\mapsto g+k$ is a self-homeomorphism of $C^r(V)$. Therefore there are arbitrarily small $k\in C^r(V)$ such that $g+k$ is nonresonant and Morse  on $K$. Considering $\tilde G(z_0,\dots,z_d):=G(z_0,\dots,z_d)+k(z_1,\dots,z_d)$ shows that $\mathcal G$ is dense in $C^r(V)$. 
\end{proof}

\begin{proof}[Proof of Proposition \ref{prop-flexibility}]
Let $f:\mathbb{R}^d\to [0,\infty)$ be a $C^\infty$ function such that $E=\{ z\in\mathbb{R}^d \mid f( z) = 0\}$ (such a function can be constructed as a convergent sum of functions that are each positive on one open balls, with the union of the balls equal to the complement of $E$). Let $F$ be $-1$ outside $\interior\rs(\vpot)$, coincide with $-f-\hf$ on a compact subset of $\interior\rs(\vpot)$ containing $E$ in its interior, and be lesser than $-\hf$ in between; such a function exists since $E$ does not approach the boundary of the rotation set.
Then maximizing $\hf( z)+F( z)$ is the same as minimizing $f( z)$, and is achieved precisely on $E$.
\end{proof}

{\JB
Since $\mathscr{Y}= -\grad h(\mathscr{V})$ where $-\grad h:\RR^d\to\inter\rs(\vpot)$ is a diffeomorphism, Proposition~\ref{prop-flexibility} gives:

\begin{corollary}\label{cor-flex-Y}
Let $(T, \vpot)$ be a Legendre $C^r$ regular system for some $2\leq r\leq\infty$.
For all compact sets $E\subset\RR^d$, there exists a $C^r$ nonlinearity $F$ such that the set  $\mathscr{Y}$ from Theorem~\ref{thm-equilibria} equals $E$. In particular the set of equilibrium measures can be infinite, even uncountable.
\end{corollary}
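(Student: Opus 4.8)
The plan is to transport the flexibility statement of Proposition~\ref{prop-flexibility}, which produces a prescribed set of equilibrium \emph{values} $\mathscr{V}\subset\inter\rs(\vpot)$, across the Legendre diffeomorphism that relates $\mathscr{V}$ to the set $\mathscr{Y}$ of Theorem~\ref{thm-equilibria}. Concretely, by Addendum~\ref{add-equilibria} (equivalently Theorem~\ref{thm:equilibria-G}(\ref{enumi:equilibria-G-ii}) together with Proposition~\ref{prop:nlem}(2), which identifies $(\grad\Pf)^{-1}$ on $\interior\rs(\vpot)$ with $-\grad\hf$), one has $\mathscr{Y}=(\grad\Pf)^{-1}(\mathscr{V})=-\grad\hf(\mathscr{V})$, and by Proposition~\ref{prop:nlem}(2)--(3) the map $\grad\Pf:\RR^d\to\interior\rs(\vpot)$ is a ($C^{r-1}$, hence at least $C^1$ since $r\ge 2$) diffeomorphism onto the interior of the rotation set.

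First I would, given a compact set $E\subset\RR^d$, set $E':=\grad\Pf(E)$. Being the continuous image of a compact set it is compact, and since $\grad\Pf(\RR^d)=\interior\rs(\vpot)$ it is contained in $\interior\rs(\vpot)$; thus $E'$ is an admissible input for Proposition~\ref{prop-flexibility}. Applying that proposition yields a $C^r$ nonlinearity $F$ (with domain an open neighbourhood of $\rs(\vpot)$) whose set of equilibrium values is exactly $\mathscr{V}=E'$. Then, by the relation recalled above and injectivity of $\grad\Pf$,
\[
  \mathscr{Y}=(\grad\Pf)^{-1}(\mathscr{V})=(\grad\Pf)^{-1}\big(\grad\Pf(E)\big)=E,
\]
which is the desired conclusion.

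For the last sentence, I would take $E$ to be any uncountable compact subset of $\RR^d$ (e.g.\ a nondegenerate closed ball); then $\mathscr{V}=E'=\grad\Pf(E)$ is uncountable as the image of an uncountable set under a diffeomorphism. Since distinct points of $\mathscr{V}$ are realized as $\nu(\vpot)$ by distinct equilibrium measures $\nu\in\EM$, the map $\nu\mapsto\nu(\vpot)$ sends $\EM$ onto $\mathscr{V}$, whence $\EM$ is itself uncountable (and in particular infinite); no uniqueness of linear equilibrium measures is needed for this counting.

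I do not expect a genuine obstacle here: the statement is a direct corollary and the only point requiring care is the bookkeeping of the various Legendre-dual maps --- making sure the precise identification $\mathscr{Y}=-\grad\hf(\mathscr{V})=(\grad\Pf)^{-1}(\mathscr{V})$ is the one recorded in Addendum~\ref{add-equilibria} and Proposition~\ref{prop:nlem}, and that the hypothesis $r\ge 2$ is exactly what guarantees $\grad\Pf$ is a diffeomorphism (rather than merely a homeomorphism) so that $E'$ is a legitimate target for Proposition~\ref{prop-flexibility}.
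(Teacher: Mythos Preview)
Your proof is correct and follows essentially the same route as the paper: transport the prescribed compact set through the Legendre diffeomorphism and invoke Proposition~\ref{prop-flexibility}. The paper phrases this in one line using $-\grad\hf$ rather than $\grad\Pf$, but these are inverse to one another by Proposition~\ref{prop:nlem}, so the arguments are identical.
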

}

\section{Examples of phase transitions}\label{sec-examples}

This section is devoted to the application of the framework developed above to a few families of systems whose energy depends on a real multiplicative parameter (i.e., an inverse temperature) and exhibiting various behaviors when this parameter is modified: changes in the number of equilibrium measures, piecewise analytic behavior with or without an affine piece. Most examples belong to the non-linear thermodynamical formalism, but even in the linear case we provide new insight thanks to the entropy-potential diagram $\mathcal{D}$, see Theorem \ref{thm-fpt-linear}.

\subsection{The Curie-Weiss Model - Symmetric case}\label{sec-Curie-Weiss}

The \emph{Curie-Weiss energy} for a potential $\pot$ is given by a quadratic nonlinearity, i.e., $\En(\mu) = \beta\En_1(\mu) = \frac{1}{2} \beta \mu(\pot)^2$ where $\beta$ is a parameter called the \emph{inverse of temperature}. For this specific case, we shall first use our general machinery above to recover an example treated in \cite{Leplaideur-Watbled}, then provide a second example exhibiting a ``metastable'' phase transition.

We consider here the left shift $T$ on $X:=\{a,b\}^\N$, endowed for example with the distance 
 $$
    d(x,y)=2^{-\inf\{i \mid x_i\neq y_i\}} \qquad \text{where } x=(x_i)_{i\in\mathbb{N}}, y=(y_i)_{i\in\mathbb{N}},
 $$
with the potential  $\pot:X\to\RR$ defined by 
 $$
    \pot(x)=\begin{cases}
      -1 &\text{if }x_0 = a \\
      1 &\text{if }x_0 = b
    \end{cases}
 $$
and the Curie-Weiss nonlinearity $F(z) = \beta F_1(z):=\frac{1}{2}\beta z^{2}$, with $\beta\ge 0$.

For any given $z\in[-1,1]$, we consider the invariant measures $\mu\in\mathcal{M}(z)$, i.e., such that $\mu([b])-\mu([a])=z$ where $[i]$ is the cylinder of words starting with the letter $i$. Since these two cylinders form a partition of $X$, this equation rewrites as $\mu([a]) =\frac{1-z}{2}$ (and therefore $\mu([b])=\frac{1+z}{2}$). 
\jb{}
Among invariant measures in $\mathcal{M}(z)$, the one of maximal entropy is the Bernoulli measure with weights $(\frac{1-z}2,\frac{1+z}2)$, whose entropy is well-known:
 $$
    \hf(z) = -\frac{1-z}{2}\log\frac{1-z}{2} -\frac{1+z}{2}\log \frac{1+z}{2}
 $$
We thus are left with maximizing, given $\beta\ge 0$,
 $$
   P_\beta(z):= \hf(z)+ \beta F(z) =-\frac{1-z}2\log\frac{1-z}2-\frac{1+z}2\log\frac{1+z}2 + \frac12 \beta z^{2}.
   $$ 
A simple computation shows that there are two cases (see Figure \ref{fig:CurieWeiss}):
\begin{enumerate}
\item For $0\le \beta\le 1$, $0$ is the unique critical point of $\P_\beta$ and is indeed a maximum. Thus, $\mathscr{V}=\{0\}$, there is a unique equilibrium state which is the Bernoulli measure of weights $(\frac12,\frac12)$, and the nonlinear topological pressure is $\NLP^{\beta\En_1}_\top(T) = \log 2$.
\item For $\beta>1$, there are three distinct critical points $\{-z_\beta,0,z_\beta\}$ among which $0$ is a local minimum and $-z_B<z_B$ are two global maxima.
Hence, $\mathscr{V}=\{-z_\beta,z_\beta\}$ and there are two equilibrium measures, which are ``symmetrical'' Bernoulli measures, one with $\mu([a])=\frac{1-z_\beta}{2}$ the other with $\mu([b])=\frac{1-z_\beta}{2}$.
\end{enumerate}
We have recovered the result of  \cite{Leplaideur-Watbled} that the nonlinear equilibrium measure is unique for $0\leq \beta\leq 1$ but that there are two of them for $\beta>1$, in line with the physical model.

\begin{figure}
\centering
\includegraphics[scale=1]{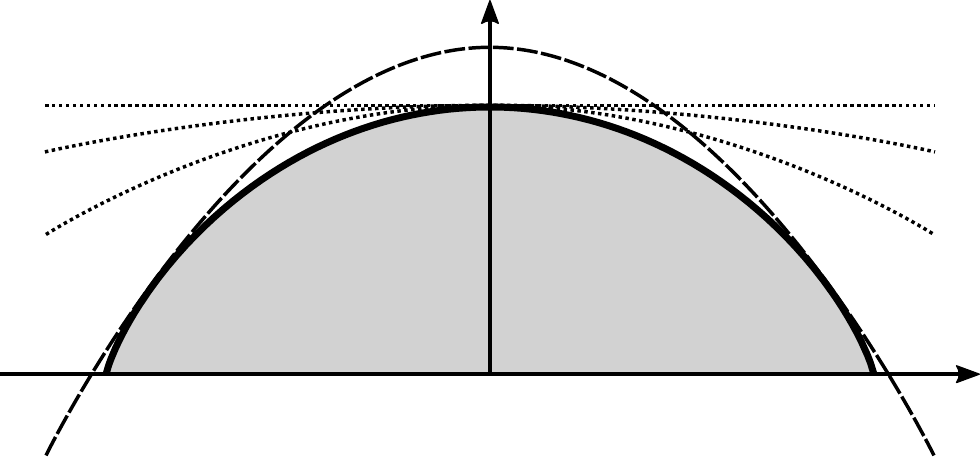}
\caption{The symmetric Curie-Weiss example: graph of $\hf$ (solid line), highest translates of the graph of $-\beta F$ touching it (dotted line: $\beta<1$; dashed line: $\beta>1$).}\label{fig:CurieWeiss}
\end{figure}

Note that any $C^2$ Legendre system $(T,\pot)$ with an entropy-potential diagram that is symmetric with respect to the vertical axis will provide a similar example. Indeed the symmetry ensures that for all $\beta$, $0$ is a critical point; and as long as $\beta<\hf''(0)$, the graph of $\hf$ being more concave at $0$ than the graph of $-\beta F$, $0$ will be a local maximum. It will then be a global maximum at least when $\beta$ is close enough to $0$. 
For $\beta>\hf''(0)$, $0$ will be a local minimum and one will get (at least) two non-zero symmetric equilibrium values.

\subsection{An asymmetric Curie-Weiss model}\label{sec-metastable}

Consider now the space of three-letter words $X=\{a,b,c\}^{\mathbb{N}}$ 
and let $T$ be the left shift on $X$. We will again consider the Curie-Weiss nonlinearities $F(z) = \beta F_1(z) = \beta\frac{z^2}{2}$ where $\beta\in[0,+\infty)$ is the inverse of the temperature, but with a potential exhibiting a specific asymmetry:
\[\pot(x) = \begin{cases}
-2 &\text{when }x_0=a \text{ or }x_0=b,\\
3 &\text{when }x_0=c
\end{cases}\]
Here $\rho(\pot)=[-2,3]$ and a measure maximizing entropy under the constraint $\mu(\pot)=z$ must, as above, be a Bernoulli measure. If we write $(p,q,1-(p+q))$ for its weights, the constraint translates as
\begin{equation}
p+q = \frac{3-z}{5}.
\end{equation}
Given this constraint, it is easily checked that entropy is maximized when $p=q$. Setting $p(z) = (3-z)/10$, we get that the measure in $\mathcal{M}(z)$ maximizing entropy is the Bernoulli measure with weights $(p(z),p(z),1-2p(z))$ and we obtain
\begin{align*}
    \hf(z) &= -2p(z)\log p(z)-(1-2p(z))\log(1-2p(z)) \\
         &= \frac{z-3}{5} \log\frac{3-z}{10}-\frac{2+z}{5}\log \frac{2+z}{5}.
\end{align*}
We are left with maximizing 
\[P_\beta(z) := \hf(z)+\beta F_1(z) = \frac{z-3}{5} \log\frac{3-z}{10}-\frac{2+z}{5}\log \frac{2+z}{5} +\frac12\beta z^2\]
for $z\in[-2,3]$. The critical points of $P_\beta$ are given by the intersections of the graph of $\hf'$ with the line $\ell_\beta=\{(z_0,z)\mid z_0=-\beta z\}$. We have
\begin{align*}
    \hf'(z) &= \frac{1}{5} \log\Big(\frac{3-z}{4+2z} \Big) \\
    \hf''(z) &= -\frac{1}{(2+z)(3-z)} = \frac15\Big(\frac{1}{z-3} -\frac{1}{z+2} \Big) \\
    \hf'''(z) &= \frac15\bigg(\frac{1}{(z+2)^2} -\frac{1}{(z-3)^2} \bigg)
\end{align*}
so that $\hf'$ is strictly decreasing, from $+\infty$ when $z\to-2$ to $-\infty$ when $z\to 3$; it has a single inflection point at $z=\frac12$, is convex on $(-2,\frac12]$ and concave on $[\frac12,3)$ (see its graph in Figure \ref{fig:hprime}). 


It follows that for $\beta\ge 0$ small enough, $P_\beta$ has only one critical point, which must be a maximum; in this regime, there is only one equilibrium state, with equilibrium value $z<0$, and the pressure varies analytically.

\begin{figure}
\centering
\includegraphics[width=.8\linewidth]{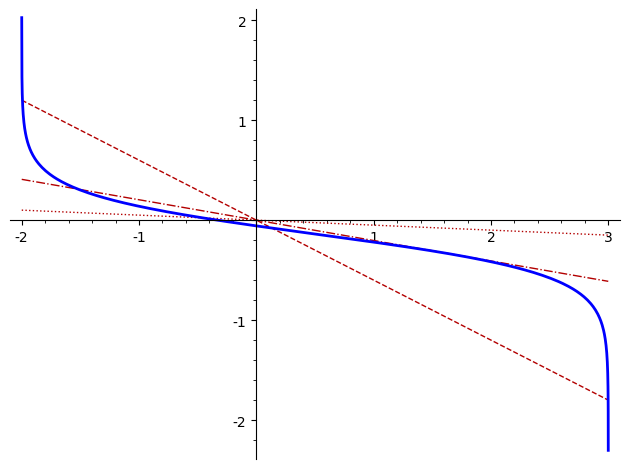}
\caption{The graph of $\hf'$ and $\ell_\beta$ for three values of $\beta$: $\beta<\beta_1$ (dotted), $\beta=\beta_1$ (dot dash), $\beta>\beta_1$ (dashed).}\label{fig:hprime}
\end{figure}

Increasing $\beta$, at some value $\beta_1$ the line $\ell_\beta$ touches the graph of $\hf'$ on the right, and a second critical point appears.
However, at this moment there is still only one equilibrium measure: $P_\beta$ is unimodal, decreasing around the second critical point. Increasing $\beta$ any further makes $P_\beta$ bimodal, with three critical points: one local minimum located between two local maximums $z_1(\beta)<z_2(\beta)$.

At first, $z_1(\beta)$ is the unique global maximum, but it ultimately gets surpassed by $P_\beta(z_2(\beta))$, precisely at the inverse temperature $\beta_0$ when the vertical translate of the graph of $-\frac{\beta}{2} z^2$ touching the graph of $\hf$ does so at two points. The choice of $\pot$ has been made to ensure this happens, by giving the entropy-potential diagram a larger overhang to the right than to the left (see Figure \ref{fig:metastable}): as $\beta\to\infty$, the highest translate of the graph of $-\beta F$ that touches the graph of $\hf$ converges to the two vertical lines of equations $(z=3)$ and $(z=-3)$. 
The latter of these vertical lines is far from the entropy-potential diagram since $\rs(\pot)=[-2,3]$, and for large enough $\beta$ the unique global maximum of $P_\beta$ must be attained at $z_2(\beta)\to 3$.

\begin{figure}
\centering
\includegraphics[scale=1]{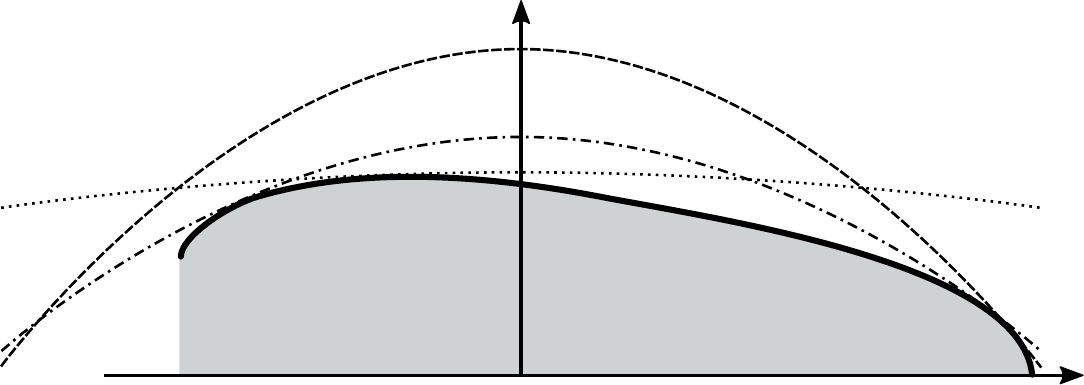}
\caption{A metastable phase transition: graph of $\hf$ (solid line, graph modified for readability), highest translates of the graph of $-\beta F_1$ touching it (dotted: $\beta<\beta_0$; dashed: $\beta>\beta_0$; dot-dashed: $\beta=\beta_0$).}\label{fig:metastable}
\end{figure}

Again the pressure is analytic for $\beta>\beta_0$, but we have a phase transition at $\beta_0$: the pressure is $\beta\mapsto \max(P_\beta(z_1(\beta)),P_\beta(z_2(\beta)))$ and cannot be analytical at the point where the arguments of the max cross each other. Observe that the value $\beta_1$ ($<\beta_0$) does not correspond to a phase transition: pressure is analytic in the vicinity of $\beta_1$.

This example motivates the following definition.
\begin{definition}
A system $(T,\En_1)$ is said to exhibit a \emph{metastable phase transition} at inverse temperature $\beta_0>0$ when there are two curves of invariant probability measures $(\mu_\beta)$, $(\nu_\beta)$ defined on a neighborhood $I$ of $\beta_0$ with $\beta\mapsto \NLP^{\beta\En_1}(\mu_\beta)$ and $\beta \mapsto \NLP^{\beta\En_1}(\nu_\beta)$ both $C^\omega$, such that:
\begin{enumerate}
\item for all $\beta\in I$, $(\mu_\beta)$, $(\nu_\beta)$ are local maximums of 
$\NLP^{\beta\En}$,
\item for $\beta<\beta_0$, $\mu_\beta$ is an equilibrium measure of $\beta\En$ but $\nu_\beta$ is not, and for $\beta>\beta_0$, $\nu_\beta$ is an equilibrium measure but $\mu_\beta$ is not.
\end{enumerate}
\end{definition}
Observe that the pressure function $\beta\mapsto \NLP^{\beta\En_1}$ is not analytic at $\beta_0$, for otherwise $\NLP^{\beta\En_1}(\mu_\beta)$ and $\NLP^{\beta\En_1}(\nu_\beta)$ would have to coincide and both $\nu_\beta$ and $\mu_\beta$ would be equilibrium measures throughout $I$.

{\JB
The ``metastable'' terminology is suggested by the analogy with the physical phenomenon of the same name. A simple example of it is that of water remaining liquid  below the freezing point in some circumstances. This is modeled by the liquid state (described by $\mu_\beta$) admitting a continuation to $\beta>\beta_0$ as a local maximum and the global maximal, the solid state (described by $\nu_\beta$), being too far from $\mu_\beta$ to allow the water to easily reorganize itself from one state to the other.
}

What we have proven can be summarized as follows.
\begin{maintheorem}\label{th-metastable}
There exists a locally constant potential $\pot$ on a full shift $X$ such that the Curie-Weiss energy $\En_1(\mu) = \frac12 \mu(\pot)^2$ exhibits a metastable phase transition.
\end{maintheorem}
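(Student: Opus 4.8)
The plan is to carry out in full the construction sketched in Section~\ref{sec-metastable}. Take $X=\{a,b,c\}^{\mathbb{N}}$, $T$ the left shift, and $\pot$ equal to $-2$ on the cylinders $[a]$ and $[b]$ and to $3$ on $[c]$. First I would record that $(T,\pot)$ is a $C^\omega$ Legendre system with unique linear equilibrium measures: $\pot$ is locally constant, hence H\"older, and is not cohomologous to a constant (its Birkhoff average is $-2$ on the fixed point $\overline{a}$ and $3$ on $\overline{c}$), so the corollary in Section~\ref{sec:convexity} on mixing subshifts of finite type applies, and $\rs(\pot)=[-2,3]$. Hence by Theorem~\ref{thm-equilibria} together with Addendum~\ref{add-equilibria}, for the Curie--Weiss energy $\En=\beta\En_1$ with $\En_1(\mu)=\tfrac12\mu(\pot)^2$ the nonlinear equilibrium measures are exactly the Bernoulli measures $\nu_z$ (the entropy maximizers in $\mathcal{M}(z)$, of weights $(p(z),p(z),1-2p(z))$ with $p(z)=(3-z)/10$) for $z$ ranging over the set $\mathscr{V}$ of maximizers of $P_\beta(z):=\hf(z)+\tfrac{\beta}{2}z^2$ on $(-2,3)=\interior\rs(\pot)$, where $\hf$ is the explicit function computed in Section~\ref{sec-metastable}. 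Thus everything reduces to a one-variable study of the family $(P_\beta)_{\beta\ge 0}$.

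Next I would analyse the critical points of $P_\beta$, i.e.\ the solutions in $(-2,3)$ of $\hf'(z)=-\beta z$, using the formulas for $\hf',\hf'',\hf'''$ displayed in Section~\ref{sec-metastable}: $\hf'$ is strictly decreasing, $\hf'(z)\to+\infty$ as $z\to-2$, $\hf'(z)\to-\infty$ as $z\to3$, and $\hf'$ has a single inflection at $z=\tfrac12$, being convex on $(-2,\tfrac12]$ and concave on $[\tfrac12,3)$; equivalently $P_\beta''=\hf''+\beta$ has sign $-$ throughout $(-2,3)$ for $\beta$ below the explicit threshold $1/\max_z(2+z)(3-z)=0.16$ and sign pattern $-,+,-$ once $\beta$ exceeds it. From this I would extract the bifurcation picture: there is a saddle--node value $\beta_1>0$ (the unique $\beta$ for which the line through the origin of slope $-\beta$ is tangent to the concave arc of the graph of $\hf'$, i.e.\ $\hf'(z_t)=z_t\hf''(z_t)$ and $\beta_1=-\hf''(z_t)$ for some $z_t\in(\tfrac12,3)$, with $\hf'''(z_t)\neq0$) such that $P_\beta$ has a unique critical point, which is its global maximum, for $0\le\beta\le\beta_1$, and exactly three critical points $z_1(\beta)<z_{\min}(\beta)<z_2(\beta)$ for all $\beta>\beta_1$, with $z_1(\beta),z_2(\beta)$ local maxima and $z_{\min}(\beta)$ a local minimum. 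Verifying that this picture persists for \emph{every} $\beta>\beta_1$ — one checks that the critical values of $P_\beta'$ move monotonically in $\beta$, so that $P_\beta'$ keeps a strictly negative local minimum and a strictly positive local maximum — is the main technical point. Since the three critical points are nondegenerate for $\beta>\beta_1$, the implicit function theorem makes $\beta\mapsto z_1(\beta),z_2(\beta)$ real-analytic on $(\beta_1,\infty)$, and hence so are the two branches $\beta\mapsto P_\beta(z_i(\beta))=\NLP^{\beta\En_1}(\nu_{z_i(\beta)})$.

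To locate the transition I would study $\Delta(\beta):=P_\beta(z_2(\beta))-P_\beta(z_1(\beta))$, real-analytic on $(\beta_1,\infty)$. As $\beta\downarrow\beta_1$ the points $z_2(\beta)$ and $z_{\min}(\beta)$ both tend to the saddle--node point $z_t$, which is \emph{not} a local maximum of $P_{\beta_1}$ and therefore not its global maximum (the maximum of $P_{\beta_1}$ is attained in the interior of $(-2,3)$ by the Claim in the proof of Theorem~\ref{thm:equilibria-G}, hence at a critical point, hence at $z_1(\beta_1)\neq z_t$), so $\Delta(\beta)\to P_{\beta_1}(z_t)-P_{\beta_1}(z_1(\beta_1))<0$. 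On the other hand, solving $\hf'(z)=-\beta z$ asymptotically gives $z_1(\beta)\to-2$ and $z_2(\beta)\to3$ as $\beta\to\infty$, so $\Delta(\beta)=\bigl(\hf(z_2(\beta))-\hf(z_1(\beta))\bigr)+\tfrac{\beta}{2}\bigl(z_2(\beta)^2-z_1(\beta)^2\bigr)\to+\infty$, the second term growing like $\tfrac52\beta$ while the first stays bounded (as $\hf(-2)=\log 2$ and $\hf(3)=0$). A real-analytic function on $(\beta_1,\infty)$ that is negative near $\beta_1$ and positive near $+\infty$ has a discrete zero set and must have a zero $\beta_0$ with $\Delta<0$ immediately to its left and $\Delta>0$ immediately to its right; fix such a $\beta_0\in(\beta_1,\infty)$.

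Finally, set $\mu_\beta:=\nu_{z_1(\beta)}$ and $\nu_\beta:=\nu_{z_2(\beta)}$ for $\beta$ in a small interval $I\subset(\beta_1,\infty)$ about $\beta_0$ on which $\Delta$ has the sign behaviour above. Throughout $I$ both $z_1(\beta)$ and $z_2(\beta)$ are local maxima of $z\mapsto P_\beta(z)$; combined with $h(\mu)\le\hf(\mu(\pot))$ this makes $\mu_\beta,\nu_\beta$ local maxima of $\NLP^{\beta\En_1}$ on $\Prob(T)$, with $\beta\mapsto\NLP^{\beta\En_1}(\mu_\beta)$ and $\beta\mapsto\NLP^{\beta\En_1}(\nu_\beta)$ real-analytic. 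Moreover, since $P_\beta$ has only the three critical points $z_1,z_{\min},z_2$ and its maximum is interior, the nonlinear equilibrium value is the unique argmax among $\{z_1(\beta),z_2(\beta)\}$; hence for $\beta<\beta_0$ in $I$ the equilibrium measure is $\mu_\beta$ and $\nu_\beta$ is not one, while for $\beta>\beta_0$ in $I$ it is $\nu_\beta$ and $\mu_\beta$ is not. This is precisely a metastable phase transition at $\beta_0$. Since $\NLP^{\beta\En_1}_\top(T)=\max\bigl(P_\beta(z_1(\beta)),P_\beta(z_2(\beta))\bigr)$ near $\beta_0$ and the two analytic branches exchange dominance there, the identity theorem shows $\beta\mapsto\NLP^{\beta\En_1}_\top(T)$ is non-analytic at $\beta_0$, as noted after the definition. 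The step I expect to be the real work is the bifurcation count of the second paragraph — establishing the exact number and nature of the critical points of $P_\beta$ for every $\beta$, together with the limits $z_1(\beta)\to-2$ and $z_2(\beta)\to 3$ that force $\Delta$ to change sign.
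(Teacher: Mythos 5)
Your proposal is correct and follows essentially the same route as the paper's Section~\ref{sec-metastable}: the same asymmetric three-letter potential, the same reduction to maximizing $z\mapsto \hf(z)+\tfrac\beta2 z^2$ on the rotation set, the same saddle--node at $\beta_1$, and the same crossing of the two real-analytic branches at $\beta_0$ forced by the asymmetry ($z_2(\beta)\to3$ while $\hf$ stays bounded). The only quibble is in the step you flag as the main technical point: the left critical value of $P_\beta'$ is \emph{not} monotone in $\beta$ (it increases up to $\beta=\tfrac16$, where the left critical point of $P_\beta'$ crosses $0$, and decreases afterwards); what you actually need, and what does hold — e.g.\ because that value equals $\hf'(u)-u\hf''(u)$ at the point $u<\tfrac12$ with $\hf''(u)=-\beta$, a quantity maximized at $u=0$ where it equals $\hf'(0)<0$ — is merely that it stays strictly negative while the right critical value increases through $0$ at $\beta_1$, so the one-then-three critical point picture indeed persists for all $\beta>\beta_1$.
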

This gives another concrete example of multiple nonlinear equilibrium measures in a context where the linear thermodynamical formalism is long known to be flawless (analytic pressure, etc.)

\subsection{The mean-field Potts model}\label{subsec:potts}

The mean-field Potts model is given by the full shift $(X,T)$ over a finite alphabet $\{\theta_{1},\ldots, \theta_{n}\}^{\N}$ or $\{\theta_{1},\ldots, \theta_{n}\}^{\Z}$, with $n\geq3$.
The potential is $\vec\pot:=(\BBone_{\theta_{1}},\ldots, \BBone_{\theta_{n}})$ and the nonlinearity $F(z)=\beta F_1(z)=\frac\beta2 \lvert \vec z\rvert^2$ where $\lvert\cdot\rvert$ is the usual Euclidean norm. The energy is thus given by 
$$\En(\mu = \beta\En_1(\mu)=\frac\beta2 \Big\lvert\int \vec\pot \dd\mu\Big\rvert^2 = \frac\beta2 \sum_i \mu([\theta_i])^2$$
where, as above, $[\theta_i]$ is a cylinder, the set of words having the letter $\theta_i$ in zeroth position.

The framework developed above seems not to apply since the potentials are not linearly independent up to (coboundaries and) constants: $\sum_i \BBone_{\theta_{i}} \equiv 1$, and the rotation set has empty interior. Let us take this as an opportunity to explain how this hypothesis is easily recovered: one simply extract a maximal independent subfamily of potentials, here $\vec\pot_\circ = (\BBone_{\theta_{1}},\ldots, \BBone_{\theta_{n-1}})$, and adjusts the nonlinearity to ensure $F_\circ(\mu(\vec\pot_\circ))=F(\mu(\vec\pot))$ for all $\mu\in\Prob(T)$, here      
 $$
   F_\circ(z_1,\dots,z_{n-1}) = \frac\beta2 \Big(z_1^2+\dots+z_{n-1}^2+\big(1-\sum_{i<n} z_i\big)^2\Big).
 $$
It is always possible to construct such an $F_\circ$, since by maximality each the potentials that are present in $\vec \pot$ can be expressed as linear combination of the potentials in $\vec\pot_\circ$ up to a coboundary and a constant, and a coboundary $g-g\circ T$ can be neglected since $\mu(g-g\circ T)=0$ for all invariant measures $\mu$.

Now $(T,\vec\pot_\circ)$ is {\JB $C^\omega$ Legendre} and we can apply Theorems \ref{thm-existence-NL} and \ref{thm-equilibria} (recall that moreover $(T,\vec\pot_\circ)$ has unique linear equilibrium measures, hence each $z\in\mathscr{V}$ yields a unique nonlinear equilibrium measure), and these results translate to the original system $(T,\vec\pot)$ with the nonlinearity $F$: accumulation points of Gibbs ensembles are convex combinations of the nonlinear equilibrium measures, each of which coincides with a linear equilibrium measure for some linear combination of the $(\pot_i)$; however, due to the lack of independence, several different linear combinations lead to the same equilibrium state.

In the specific case of the mean-field Potts model one can work out the equilibrium measures by (nontrivial) direct computations.
Given a vector $z:=(z_{1},\ldots, z_{n})$ in the rotation set 
 $$
    \rs(\vec\pot) :=\Big\{\int\vec\pot\,d\mu,\ \mu\in\Prob(T)\Big\} = \Big\{(z_1,\dots z_n) \in [0,1]^n \colon \sum_i z_i = 1\Big\},
 $$
the maximal entropy among invariant measures $\mu$ satisfying $\mu(\vec\pot)=z$ is  $\hf(z)=-\sum_{i}z_{i}\log z_{i}$ with the convention $0\log 0=0$. It is achieved by a unique measure, the Bernoulli measure giving each cylinder $[\theta_i]$ the mass $z_i$.

For $\be\ge 0$, the nonlinear pressure is 
$$\NLP_\top^{\beta\En_1}=\max_{\vec z}-\sum_{i}z_{i}\log z_{i}+\frac{\be}{2}\sum_{i}z_{i}^{2}.$$

We now summarize results from \cite{EllisWang90}.
For $0 \le \be<\be_{c}:=\disp 2\frac{n-1}{n-2}\log(n-1)$, $\NLP_{\top}^{\beta\En_1}$ is reached for $z=(\frac1n,\dots,\frac1n)$. 
The value is $\frac\be{2n}+\log n$ and is achieved by a unique measure.

For $\be>\be_{c}$,  $\NLP_{\top}^{{\BK \beta}\En_1}$ is given by an implicit equation. It is realized by $z$ equal to any permutation of $\widetilde{z}$ defined by 
$$\widetilde{z}_{1}=\frac{1+(n-1)s}n,\ \widetilde{z}_{i}=\frac{1-s}n,\ 2\le i\le n$$
where $s$ is the biggest solution for 
$$s=\frac{1-e^{-\be s}}{1+(n-1)e^{-\be s}}.$$
Each permutation of $\widetilde{z}$ gives a distinct equilibrium measure. Thus we get exactly $n$ equilibrium measures.

For $\be=\be_{c}$, the maximal value is simultaneously realized by ($\frac1n,\ldots ,\frac1n)$ and by the $n$ distinct permutations of $\tilde z$. Thus we get exactly $n+1$ equilibrium measures. In this case, the convergence of Gibbs measures to a convex combination of these equilibrium measures was previously shown in \cite{Leplaideur-Watbled-2}.

\subsection{Freezing phase transitions}\label{sec:transition}

Let us explain how the entropy-potential diagram can be used to visualize ``freezing phase transitions'', i.e., situations where for some $\beta_0$, the set of equilibrium measures of the energy $\beta\En_1$ is constant for $\beta>\beta_0$. These measures are called the \emph{ground states}. The physical interpretation is that once the temperature goes below some positive value $1/\beta_0$, the system freezes in  a macroscopic state corresponding to zero temperature, described by (one of) the ground states. In the linear thermodynamical formalism, the first freezing phase transition was exhibited by Hofbauer \cite{Hofbauer}, motivated by giving examples with multiple equilibrium states (this is sometimes achieved at $\beta=\beta_0$). Concretely, the typical examples are for the shift $T$ on $X = \{a,b\}^\mathbb{N}$ or $X = \{a,b\}^\mathbb{Z}$ with potentials
\[\pot(x) = -\frac{1}{k(x)^\alpha}, \qquad k(x) := \min\{\lvert k\rvert \colon x_k\neq a\}\]
with $\alpha\in(0,1]$, and the freezing equilibrium measure is $\mu_0=\delta_{aaa\dots}$. It has more recently been shown by Bruin and Leplaideur \cite{Bruin-Leplaideur1, Bruin-Leplaideur2} that one can produce in a similar way a freezing phase transition with more interesting ground states, supported on some uniquely ergodic, zero-entropy compact subsets of $X$ such as given by the Thue-Morse or the Fibonacci substitutions.

Let us interpret in the entropy-potential diagram $\mathcal{D}$ such a freezing phase transition, with potential $\pot$ being maximized by some invariant measure $\mu_0$, say with $\mu_0(\pot)=0$ for normalization. By definition, for $\beta\ge\beta_0$ the pressure is affine and achieved at $\mu_0$, meaning that all lines of slope $<-\beta_0$ touching $\mathcal{D}$ do it at the same point (see Figure \ref{fig:freezing}).

\begin{figure}
\centering
\includegraphics[width=.48\linewidth]{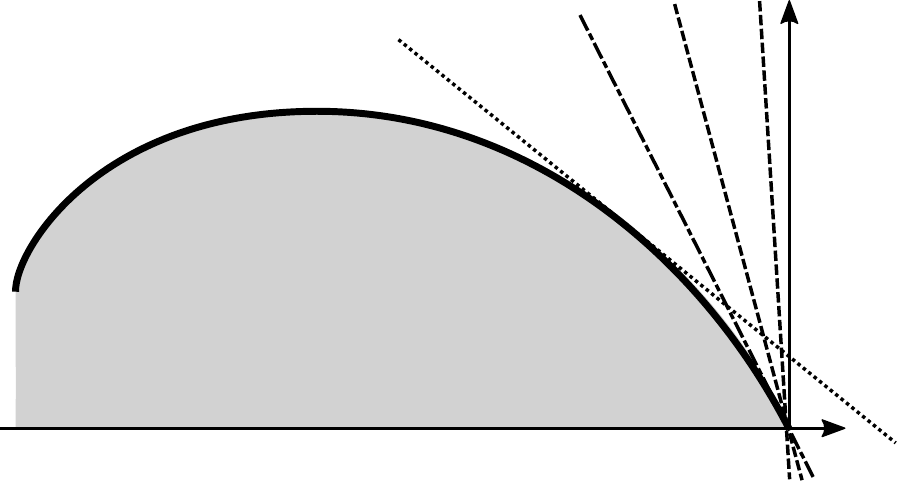}
\includegraphics[width=.48\linewidth]{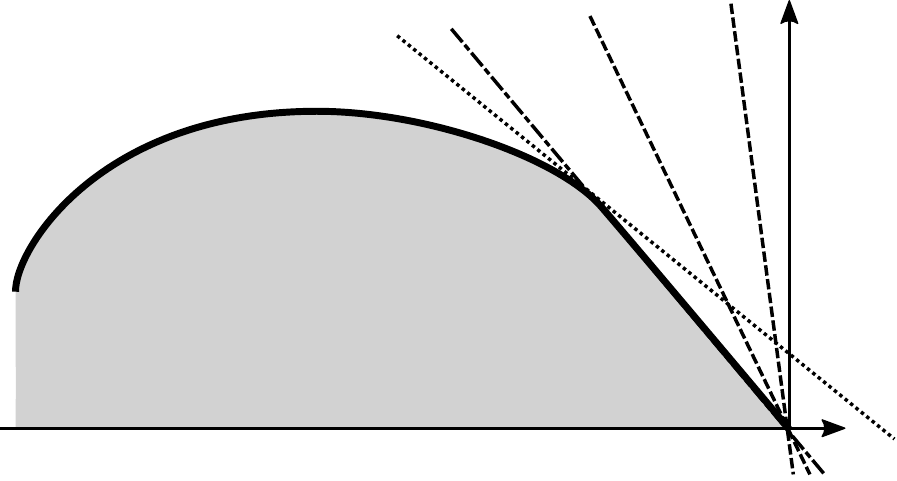}
\caption{Freezing phase transitions in the linear thermodynamical formalism: for $\beta>\beta_0$, all support lines are concurrent, and $\mathcal{D}$ must exhibit an acute corner at its right end. Left: $\hf$ is strictly concave, there might be a unique equilibrium measure throughout (case $\alpha=1$ in Hofbauer's example). Right: $\hf$ has a flat part, at $\beta_0$ there are (at least) two ergodic equilibrium measure, one at each end of the flat edge (case $\alpha<1$ in Hofbauer's example).}\label{fig:freezing}
\end{figure}

This observation immediately implies a characterization of (linear) freezing phase transition by a linear inequality between the entropy and the integral of the potential.

\begin{proposition}\label{prop-fpt}
Let $T:X\to X$ be a {\BK measurable map}, $\pot:X\to\mathbb{R}$ be a potential whose rotation set has the form $[r,0]$ for some $r\in(-\infty,0)$, such that there is an invariant measure $\mu_0$ realizing $\mu_0(\pot) = 0$ and maximizing entropy among such measures: $h(T,\mu_0)=\max \{h(T,\mu) \colon \mu\in \Prob(T), \mu(\pot)=0\}$.
The following are equivalent:
\begin{enumerate}
\item\label{enumi:fpt1} the linear thermodynamical formalism for the system $(T,\pot)$ exhibits a freezing phase transition, i.e., for some $\beta_0>0$ and all $\beta>\beta_0$, the set of equilibrium measures is non-empty and independent of $\beta$,
\item\label{enumi:fpt2} there is some finite $\beta$ such that $\mu_0$ is an equilibrium measure for $\beta\pot$,
\item\label{enumi:fpt3} the topological pressure function
\begin{align*}
\Pf \colon \mathbb{R} &\to \mathbb{R} \\
  \beta &\mapsto \sup \{ h(T,\mu) + \beta\mu(\pot) \colon \mu\in\Prob(T) \}
\end{align*}
is affine on some interval $[\beta_0,+\infty)$,
\item\label{enumi:fpt4} there exists $C>0$ such that $h(T,\mu) \le h(T,\mu_0) -C \mu(\pot)$ for all $\mu\in\Prob(T)$.
\end{enumerate}  
When these conditions are realized, the critical inverse temperature, i.e., the least possible value of $\beta_0$, is the least possible $C$ in the entropy-potential inequality (\ref{enumi:fpt4}). The intercept of the affine part of the graph of $\Pf$ is then the entropy of equilibrium measures after the freezing phase transition, and its slope is their energy $\mu(\pot)$ (here $0$  is given  by the chosen normalization of the rotation set).
\end{proposition}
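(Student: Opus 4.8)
The plan is to reduce all four conditions to statements about two conjugate convex-analytic objects: the concave \emph{entropy function} $\hf(z)=\sup\{h(T,\mu):\mu(\pot)=z\}$ on the rotation set $[r,0]$, and the pressure $\Pf(\beta)=\sup_{z\in[r,0]}\big(\hf(z)+\beta z\big)$. I would first record the structural facts used throughout: $\hf$ is concave (entropy is affine and the fibres $\mathcal M(z)$ are convex), with $\hf(0)=h(T,\mu_0)$ by the standing hypothesis on $\mu_0$; assuming $h_\top(T)<\infty$ (implicit here, otherwise $\Pf$ is not real-valued), $\Pf$ is finite, convex and continuous on $\RR$, and \emph{non-increasing}, being a supremum of the affine maps $\beta\mapsto\hf(z)+\beta z$ whose slopes $z\in[r,0]$ are all $\le 0$; and $\Pf(\beta)\ge\hf(0)=h(T,\mu_0)$ for every $\beta$. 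The pivotal elementary remark is that, since $\mu_0(\pot)=0$, the measure $\mu_0$ is a linear equilibrium measure for $\beta\pot$ if and only if $\Pf(\beta)=h(T,\mu_0)$.

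I would then close the logical loop by proving (iv)$\Rightarrow$(i)$\Rightarrow$(ii)$\Rightarrow$(iv), together with (iv)$\Rightarrow$(iii) and (iii)$\Rightarrow$(ii). For (iv)$\Rightarrow$(i): if $h(T,\mu)\le h(T,\mu_0)-C\mu(\pot)$ for all $\mu$, then for $\beta>C$ and any $\mu$ one has $h(T,\mu)+\beta\mu(\pot)\le h(T,\mu_0)+(\beta-C)\mu(\pot)\le h(T,\mu_0)$, with equality iff $\mu(\pot)=0$ and $h(T,\mu)=h(T,\mu_0)$; hence $\Pf(\beta)=h(T,\mu_0)$ and the equilibrium set equals $\{\mu:\mu(\pot)=0,\ h(T,\mu)=h(T,\mu_0)\}$, nonempty (it contains $\mu_0$) and independent of $\beta$, so $(T,\pot)$ exhibits a freezing phase transition with $\beta_0=C$; the same computation shows $\Pf\equiv h(T,\mu_0)$ on $[C,+\infty)$, which is (iii). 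For (i)$\Rightarrow$(ii): if the equilibrium set $\EM$ is nonempty and constant for $\beta>\beta_0$, then $\Pf(\beta)=h(T,\mu)+\beta\mu(\pot)$ for each fixed $\mu\in\EM$ and all $\beta>\beta_0$; matching these affine functions of $\beta$ forces $\mu(\pot)$ and $h(T,\mu)$ to be constant over $\EM$ and $\Pf$ to be affine of slope $s=\mu(\pot)\le 0$ there; the lower bound $\Pf\ge h(T,\mu_0)$ rules out $s<0$, so $s=0$, and then $h(T,\mu)\le\hf(0)=h(T,\mu_0)\le\Pf(\beta)=h(T,\mu)$ gives $\Pf\equiv h(T,\mu_0)$ on $[\beta_0,+\infty)$ by continuity, i.e.\ $\mu_0$ is an equilibrium measure for $\beta_0\pot$. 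For (ii)$\Rightarrow$(iv): if $\Pf(\beta)=h(T,\mu_0)$ for some finite $\beta$, then $\hf(z)+\beta z\le h(T,\mu_0)$ for all $z\in[r,0]$, which is exactly (iv) with $C=\beta$ (enlarging $C$ if needed so that $C>0$, which is harmless by monotonicity in $C$, see below).

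The step I expect to be the main obstacle is (iii)$\Rightarrow$(ii). Given $\Pf$ affine on $[\beta_0,+\infty)$, convexity, monotonicity and the lower bound again force its slope there to be $0$, so $\Pf\equiv a$ on $[\beta_0,+\infty)$ for some $a\ge h(T,\mu_0)$; the delicate point is to show $a=h(T,\mu_0)$. I would argue that if near-maximisers $z_n$ in $\Pf(\beta_0)=\sup_z(\hf(z)+\beta_0 z)$ accumulated at an interior point $z_*<0$, then continuity of $\hf$ there would give $\Pf(\beta)\ge\hf(z_*)+\beta z_*=a+(\beta-\beta_0)z_*<a$ for $\beta>\beta_0$, contradicting (iii); hence the suprema concentrate at $z=0$ and $a=\lim_{z\to 0^-}\hf(z)$. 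Combined with $a\ge\hf(0)=h(T,\mu_0)$, one gets $a=h(T,\mu_0)$ \emph{provided} $\hf$ is left-continuous at the right endpoint $0$ of the rotation set — which holds in particular whenever $h(T,\cdot)$ is upper semicontinuous, the setting of all the applications (e.g.\ Theorem~\ref{thm-fpt}). This endpoint semicontinuity at the top of the rotation set is the one point where care is needed for a genuinely arbitrary measurable $T$; once $a=h(T,\mu_0)$ is known, $\Pf(\beta_0)=a$ says $\mu_0$ is an equilibrium measure for $\beta_0\pot$, i.e.\ (ii).

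Finally the quantitative claims fall out of the proof: (iv) with constant $C$ is equivalent, by the above, to (i)/(ii)/(iii) with $\beta_0=C$, and (iv) is stable under enlarging $C$ (because $\mu(\pot)\le 0$) and under passing to the limit in $C$; so the admissible constants form a closed half-line whose left endpoint is simultaneously the least admissible $C$ and the least admissible $\beta_0$, i.e.\ the critical inverse temperature. For $\beta\ge\beta_0$ we showed $\Pf(\beta)=h(T,\mu_0)$, so the affine part of the graph of $\Pf$ is the horizontal line at height $h(T,\mu_0)$, which equals the common entropy of the ground states, with slope $0$, which under the normalization $\mu_0(\pot)=0$ is their common energy $\mu(\pot)$.
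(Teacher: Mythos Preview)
Your argument is correct and follows the same convex-duality idea as the paper, which proves the cycle $(i)\Rightarrow(iii)\Rightarrow(iv)\Rightarrow(ii)\Rightarrow(i)$ using the interplay between $\hf$ and $\Pf$; you run a different but equivalent cycle $(iv)\Rightarrow(i)\Rightarrow(ii)\Rightarrow(iv)$ together with $(iv)\Rightarrow(iii)$ and $(iii)\Rightarrow(ii)$. Your caveat on $(iii)\Rightarrow(ii)$ is well taken: the step genuinely needs $\bar\hf(0)=\hf(0)$ (left-continuity of $\hf$ at the right endpoint, guaranteed e.g.\ by upper semicontinuity of entropy), and the paper's own passage from the angular point of the diagram of $\bar\hf$ at $z=0$ to the ordinate $h(T,\mu_0)$ makes the same implicit assumption. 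Since all applications in the paper (in particular Theorem~\ref{thm-fpt}) assume upper semicontinuous entropy, this is harmless there; your explicit flag is an improvement in precision rather than a defect.
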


\begin{proof}
The main novelty here is the observation that (\ref{enumi:fpt4}) characterizes Freezing Phase Transitions, but for the sake of completeness we prove all the equivalences, through the cycle $\text{\it(\ref{enumi:fpt1})}\implies \text{\it(\ref{enumi:fpt3})} \implies \text{\it(\ref{enumi:fpt4})} \implies \text{\it(\ref{enumi:fpt2})} \implies \text{\it(\ref{enumi:fpt1})}$.

Assume \textit{(\ref{enumi:fpt1})} and let $\mu_1$ be any equilibrium measure for any $\beta>\beta_0$. For all $\beta>\beta_0$ we get $\Pf(\beta) = h(T,\mu_1)+\beta\mu_1(\pot)$, an affine expression.

Convex duality translates angular points to flat regions and vice-versa; that $\Pf$ is affine on an interval means that the entropy-potential diagram has an angular point with a supporting line of slope $-\beta$ for each $\beta$ in the interval. 
 Let us explain this, a simple case of what we left hidden behind the appeal to Legendre duality above. Using the notation $\hf(z)=\sup\{h(T,\mu) \colon \mu(\pot)=z\}$ for all $z\in[r,0]$, $\hf$ is concave thus continuous on $(r,0)$, and has a continuous extension $\bar\hf$ on $[r,0]$. We can the rewrite $\Pf(\beta)=\max_z \bar\hf(z)+\beta z$. Denoting by $z_\beta$ an abscissa realizing $\Pf(\beta)$, observe that for all $\eps>0$, $\Pf(\beta+\eps)\ge \bar\hf(z_\beta)+(\beta+\eps) z_\beta\ge \Pf(\beta)+\eps z_\beta$ so that the right derivative of $\Pf$ is at least $z_\beta$. Similarly, $\Pf(\beta-\eps)\ge \Pf(\beta)-\eps z_\beta$ shows that the left derivative is at most $z_\beta$. Whenever $\Pf$ is differentiable, $\Pf'(\beta)=z_\beta$. On an affine part, the derivative exists and is constant, therefore $z_\beta$ is (locally) constant and $\hf$ has an angular point.
Moreover the abscissa of the angular point is the slope of the line extending the affine part of the graph of $\Pf$, while the ordinate of that point is the intercept of that line. 

Item \textit{(\ref{enumi:fpt3})} thus implies that the entropy-potential diagram has an angular point with supporting lines of slope $-\beta$ for all $\beta\ge\beta_0$. Since slopes are arbitrarily high in magnitude, the abscissa of this angular point must be the supremum of the rotation set, i.e., $0$. It must then have ordinate equal to the supremum of the realizable entropies for this energy, i.e., $h(T,\mu_0)$. In particular, the entropy-potential diagram is constrained under a line of equation $(h(T,\mu)=h(T,\mu_0)-\beta_0 \mu(\pot))$, which is \textit{(\ref{enumi:fpt4})}.

Assume \textit{(\ref{enumi:fpt4})} and take any $\beta\ge C$. For all $\mu\in\Prob(T)$,
\[h(T,\mu)+\beta\mu(\pot) \le h(T,\mu_0) + (\beta-C) \mu(\pot) \le h(T,\mu_0) = h(T,\mu_0)+\beta\mu_0(\pot)\]
so that $\mu_0$ is an equilibrium measure for such $\beta$, proving \textit{(\ref{enumi:fpt2})}.

Assume \textit{(\ref{enumi:fpt2})}, let $\beta_1$ be such that  $\mu_0$ is an equilibrium measure for $\beta_1\pot$ and $\beta>\beta_1$. For all $\mu\in\Prob(T)$, since $\mu(\pot)\le 0$ and $\mu_0(\pot)=0$,
\[h(T,\mu)+\beta\mu(\pot) \le h(T,\mu)+\beta_1\mu(\pot) \le h(T,\mu_0)+\beta_1\mu_0(\pot) = h(T,\mu_0)+\beta\mu_0(\pot) \]
and $\mu_0$ is an equilibrium measure for $\beta\pot$. It follows that the set of $\beta$'s such $\mu_0$ is an equilibrium measure for $\beta\pot$ is an interval $\beta_0,+\infty)$. The above computation shows that for all $\beta>\beta_0$, the set of equilibrium measure is $\{\mu\in\Prob(T) \colon \mu(\pot)=0, h(T,\mu)=h(T,\mu_0) \}$, and is thus independent of $\beta$.
\end{proof}

\begin{remark}
If we consider several potentials $\pot_1,\dots,\pot_d$, the condition in Legendre regularity that $\lvert\grad \hf\rvert$ goes to $+\infty$ as one approaches the boundary is violated exactly when some linear combination of the $(\pot_k)$ exhibit a (linear) freezing phase transition.
\end{remark}

The entropy-potential diagram makes it {\JB clear} how to prove existence of freezing phase transition in both the linear and nonlinear settings. We divide Theorem \ref{thm-fpt} of the introduction in two parts.

\begin{theorem}\label{thm-fpt-linear}
Let $T:X\to X$ be a continuous {\BK map} of finite, positive topological entropy such that $\mu\mapsto h(T,\mu)$ is upper semi-continuous. Consider  $\mu_0\in\Prob_\erg(T)$ with zero entropy. Then there exists a continuous potential $\pot:X\to \mathbb{R}$ such that the linear thermodynamical formalism of $(T,\pot)$ exhibits a freezing phase transition with ground state $\mu_0$. Moreover we can ensure that $\mu_0$ is the unique ground state, and that at the critical inverse temperature $\beta_0$ there are exactly two equilibrium states.
\end{theorem}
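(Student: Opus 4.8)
The plan is to reduce the theorem, via Proposition~\ref{prop-fpt}, to the construction of a single continuous potential, and then to build that potential by a multi-scale argument exploiting $h(T,\mu_0)=0$. By (the proof of) Proposition~\ref{prop-fpt} it suffices to produce a continuous $\pot\colon X\to\RR$ and a constant $\beta_0>0$ such that \textit{(I)} $\mu_0$ is the unique invariant measure maximizing $\mu\mapsto\mu(\pot)$, and \textit{(II)} $h(T,\mu)\le\beta_0\bigl(\mu_0(\pot)-\mu(\pot)\bigr)$ for every $\mu\in\Prob(T)$: indeed, normalizing $\mu_0(\pot)=0$, \textit{(II)} is exactly the entropy--potential inequality~(\ref{enumi:fpt4}) (using $h(T,\mu_0)=0$), which by Proposition~\ref{prop-fpt} produces a freezing phase transition, while \textit{(I)} forces its unique ground state to be $\mu_0$. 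I would first dispose of the case $h_\top(T|_{\supp\mu_0})=0$, which is essentially item~(2) of Theorem~\ref{thm-fpt}: take $\pot=-\mathrm{dist}(\cdot,\supp\mu_0)$ composed with a suitable modulus, so $\pot\le0$, $\pot^{-1}(0)=\supp\mu_0$ has zero entropy, $\mu_0$ is a ground state, and a small further perturbation makes it the unique one. The genuinely new situation is $h_\top(T|_{\supp\mu_0})>0$: then $\pot$ cannot be nonpositive (a nonpositive $\pot$ with $\mu_0(\pot)=0$ vanishes on $\supp\mu_0$, so every invariant measure carried by $\supp\mu_0$ would also be maximizing), hence $\pot$ must oscillate, averaging to $0$ only along $\mu_0$-generic orbits.

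For the construction I would use the Brin--Katok/Katok entropy formula together with $h(T,\mu_0)=0$ and the upper semicontinuity of entropy. For each scale $m$, choose $\eps_m\downarrow0$, a large integer $n_m$ and a finite family of Bowen balls $\{B(y,\eps_m,n_m):y\in F_m\}$ with $\#F_m\le e^{n_m/m}$ covering a set $A_m$ with $\mu_0(A_m)\ge1-\delta_m$, $\delta_m\downarrow0$; by Lemma~\ref{l:Birkhoff} and Egorov one may arrange $A_m\subset\{x:\wass(\Delta_x^{n_m},\mu_0)<\eps_m\}$ and $F_m\subset A_m$, and by u.s.c.\ of $h$ one may also arrange that the union $\bigcup_{y\in F_m}B(y,\eps_m,n_m)$ cannot be overloaded by positive-entropy measures (an invariant $\mu$ giving it $\mu$-measure close to $1$ has entropy $\le 1/m$). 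Set $D_m(x):=\min_{y\in F_m}\max_{0\le k<n_m}d(T^kx,T^ky)$ (continuous), pick $\rho_m\colon[0,\infty)\to[0,1]$ continuous with $\rho_m\equiv0$ on $[0,\eps_m]$ and $\rho_m\equiv1$ on $[2\eps_m,\infty)$, and let $g_m:=\mu_0(\rho_m\circ D_m)-\rho_m\circ D_m$, so that $\mu_0(g_m)=0$ and $\|g_m\|_\infty\le1$. Finally set $\pot:=\sum_m c_m g_m$ with weights $c_m>0$ chosen summable (so $\pot\in C(X,\RR)$) and decaying slowly enough to support the estimate below.

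The verification would then run as follows. One has $\mu_0(\pot)=0$ by construction. For \textit{(I)}: if $\mu\ne\mu_0$, Lemma~\ref{l:Birkhoff} and Egorov give a set $E$ with $\mu(E)>1/2$ and an $N$ with $\wass(\Delta_x^n,\mu_0)\ge\tfrac12\wass(\mu,\mu_0)$ for $x\in E$, $n\ge N$; using the diagonal coupling (as in the proof of Lemma~\ref{lem-dividing}) one gets that at every scale $m$ with $n_m\ge N$ and $\eps_m$ small, $E\subset\{D_m\ge2\eps_m\}$, so $\mu(\rho_m\circ D_m)>\tfrac12$ while $\mu_0(\rho_m\circ D_m)\le\delta_m$ — this asymmetry is the mechanism pushing $\mu(\pot)$ below $\mu_0(\pot)$, and turning it into \emph{strict} maximality uniformly uses that no invariant measure can beat $\mu_0$ simultaneously at \emph{all} scales (which would force it to equal $\mu_0$). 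For \textit{(II)}: if $h(T,\mu)=t>0$, Katok's formula for $\mu$ together with the subexponential bound $\#F_m\le e^{n_m/m}$ forces $\mu(\rho_m\circ D_m)$ to stay bounded below by some $c(t)>0$ for all large $m$, so that $\mu_0(\pot)-\mu(\pot)=\sum_m c_m\bigl(\mu(\rho_m\circ D_m)-\mu_0(\rho_m\circ D_m)\bigr)$ is bounded below by a positive multiple of $t$; balancing the scale-$m$ ``escape'' weight $c_m$ against its logarithmic count $1/m$ yields a \emph{uniform} $\beta_0$ in \textit{(II)}. For the remaining fine points — $\mu_0$ the \emph{unique} ground state and \emph{exactly two} ergodic equilibrium states at the critical inverse temperature $\beta_0$ — I would, after fixing such a $\pot$, add a further small continuous perturbation so that the entropy--potential diagram of $\pot$ becomes affine on exactly one edge $[z_1,0]$ ending at the corner $z=0$ and strictly concave below $z_1$, with a single ergodic measure $\mu_1$ realizing $z_1$; concretely one reserves one extra scale to grant a definite ``bonus'' to $\mu_1$ without disturbing the corner at $0$.

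The step I expect to be the main obstacle is exactly the construction and its bookkeeping in the second and third paragraphs: the whole difficulty is choosing the parameters $(\eps_m,n_m,\delta_m)$ and the weights $c_m$ in the right order so that the series defining $\pot$ converges uniformly \emph{and} the single-scale escape estimates assemble into the \emph{two uniform} conclusions ``$\mu_0$ is the \emph{unique} maximizing measure'' and ``$h(T,\mu)\le\beta_0\bigl(\mu_0(\pot)-\mu(\pot)\bigr)$''. A one-scale bound alone is insufficient — measures very close to $\mu_0$ escape only at very high scales carrying tiny weight — so one must genuinely combine the multi-scale structure, the weak-star continuity of $\mu\mapsto\mu(\rho_m\circ D_m)$, and Katok's entropy formula, and this is precisely where the entropy--potential diagram serves as the organizing picture.
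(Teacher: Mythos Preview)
Your proposal takes a completely different route from the paper. The paper's proof is essentially two sentences: by Jenkinson's theorem \cite{Jenkinson}, there is a continuous potential $\tilde\pot$ for which $\mu_0$ is the \emph{unique} equilibrium state at $\beta=1$; since $h(T,\mu_0)=0$ is minimal, $\mu_0$ must then be a maximizing measure for $\tilde\pot$, and Proposition~\ref{prop-fpt} applied to $\pot=\tilde\pot-\mu_0(\tilde\pot)$ finishes. For the ``exactly two'' clause one invokes the finite-set version of Jenkinson's theorem with $\{\mu_0,\mu_1\}$ for any ergodic $\mu_1$ of positive entropy. In other words, the paper outsources the entire construction of $\pot$ to a black box (proved via a Hahn--Banach/Bishop--Phelps argument); the remark after the theorem explicitly notes that the proof is therefore \emph{not constructive}.

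What you attempt is, in effect, a constructive proof of (a quantitative strengthening of) Jenkinson's theorem in this special case. That is a legitimate and interesting goal, and your instincts about the shape of the construction (multi-scale Bowen covers exploiting $h(T,\mu_0)=0$, Brin--Katok/Katok entropy formula to force escape for positive-entropy measures) are reasonable. If carried out, it would buy something the paper's proof does not: an explicit potential.

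However, as written the proposal is a sketch with real gaps, and you correctly identify the main one yourself. Concretely: you never specify a choice of $(c_m)$ that is simultaneously summable \emph{and} makes both \textit{(I)} and \textit{(II)} hold with a uniform constant. For \textit{(I)}, your argument shows that for fixed ergodic $\mu\ne\mu_0$ the terms $\mu(\rho_m\circ D_m)-\mu_0(\rho_m\circ D_m)$ are eventually bounded below, but ``eventually'' depends on $\mu$; without control on how late this kicks in you cannot conclude strict inequality once the $c_m$ are summable. For \textit{(II)}, the claim that $\mu(\rho_m\circ D_m)\ge c(t)$ for all large $m$ whenever $h(T,\mu)=t>0$ needs a proof (Katok's formula gives asymptotics in $n$, not a one-shot bound at scale $n_m$), and even granting it you must show $\sum_m c_m\cdot c(t)\ge t/\beta_0$ uniformly in $t$, which forces the $c_m$ to decay like $1/m$ or slower --- in tension with summability. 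Finally, the ``exactly two equilibrium states at $\beta_0$'' step (flattening the diagram on a single edge while keeping it strictly concave elsewhere) is only gestured at; this is a genuine additional construction, and the paper again handles it in one line via Jenkinson.
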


In particular, if $K$ is a compact $T$-invariant {\JB set} with zero topological entropy, then we can find a potential exhibiting a freezing phase transition supported on $K$. This broadly extends \cite{Bruin-Leplaideur1, Bruin-Leplaideur2} by proving existence of freezing phase transitions for all zero-entropy subshifts, instead of very specific ones; but it is not constructive, since the potential $\pot$ is ultimately obtained through the Hahn-Banach theorem.

\begin{proof}
According to a Theorem of Jenkinson \cite{Jenkinson}, there exists a continuous potential $\tilde\pot:X\to\mathbb{R}$ such that $\mu_0$ is the unique equilibrium state of $\tilde\pot$, i.e., the unique maximizer of $h(T,\mu)+\beta\mu(\tilde\pot)$ for $\beta=1$. Since $h(T,\mu_0)=0$ is minimal, $\mu_0$ must be a maximizing measure for $\tilde\pot$. The conclusion then follows from Proposition \ref{prop-fpt} applied to the adjusted potential $\pot=\tilde\pot-\mu_0(\tilde\pot)$.

To have a second equilibrium state at the critical inverse temperature, it suffices to consider an arbitrary ergodic measure $\mu_1$ of positive entropy: Jenkinson's theorem provides a continuous potential whose only ergodic equilibrium states (at $\beta=1$) are $\mu_0$ and $\mu_1$. This also fixes the critical inverse temperature at $\beta_0=1$.
\end{proof}

\begin{theorem}\label{thm-fpt-nonlinear}
Let $T:X\to X$ be a continuous dynamical system of finite, positive topological entropy such that $\mu\mapsto h(T,\mu)$ is upper semi-continuous. Let $\pot:X\to(-\infty,0]$ be a continuous potential such that $K=\pot^{-1}(0)$ is $T$-invariant and has zero topological entropy. 

Then there exists a continuous nonlinearity $F_1 : (-\infty,0]\to (-\infty,0]$ with $F(0)=0$ such that the energy $\En_1(\mu) = F_1(\mu(\pot))$ exhibits a ``strong freezing phase transition'' in the following sense. There is a $\beta_0>0$ such that:
\begin{itemize}
\item for each $\beta<\beta_0$ the energy $\beta \En_1$ has at least one equilibrium measure, and none of them are supported on $K$,
\item at $\beta=\beta_0$ there are several equilibrium measures, at least one supported on $K$ and one not supported on $K$,
\item for each $\beta>\beta_0$ the equilibrium measures are exactly the $K$-supported, $T$-invariant measures and the topological pressure function $\beta\mapsto \NLP^{\beta\En_1}_\top(T)$ is affine.
\end{itemize}
\end{theorem}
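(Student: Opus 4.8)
We sketch a proof of Theorem~\ref{thm-fpt-nonlinear}; the plan is to read everything off the entropy--potential diagram of $(T,\pot)$, choosing a nonlinearity whose graph, once scaled by $\beta$, detaches from the diagram at $z=0$ exactly when $\beta$ crosses a critical value. We may assume $K\neq\varnothing$, otherwise there are no $K$-supported measures while equilibria always exist and the statement is vacuous. Since $h_\top(T|_K)=0<h_\top(T)$ we have $K\neq X$, hence $\pot\not\equiv 0$; and if every invariant measure had $\mu(\pot)=0$ it would be carried by $K$, forcing the contradiction $h_\top(T)=h_\top(T|_K)=0$. Thus $\rs(\pot)=[r,0]$ with $r<0$. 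First I would record the elementary properties of $\hf$: using upper semicontinuity of $\mu\mapsto h(T,\mu)$ and compactness of $\Prob(T)$ one gets that $\hf$ is concave and upper semicontinuous on $[r,0]$, hence continuous there; clearly $\hf\ge 0$; any $\mu$ with $\mu(\pot)=0$ has $\pot=0$ $\mu$-a.e.\ hence $\supp\mu\subset K$ and so zero entropy, giving $\hf(0)=0$; and a positive-entropy invariant measure $\nu$ is not carried by $K$, whence $z_0:=\nu(\pot)<0$ and $\hf(z_0)\ge h(T,\nu)>0$.

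Next I would define the nonlinearity by $F_1(z):=-\sqrt{\hf(z)}+z$ for $z\in[r,0]$, extended to $(-\infty,r)$ by the constant $F_1(r)$. Then $F_1$ is continuous on $(-\infty,0]$, $F_1(0)=0$, $F_1\le 0$, and $F_1<0$ on $(-\infty,0)$. For $z\in[r,0)$ set $\rho(z):=\hf(z)/(-F_1(z))=\hf(z)/(\sqrt{\hf(z)}+|z|)$, so $0\le\rho(z)\le\sqrt{\hf(z)}\le\sqrt{h_\top(T)}$; hence $\rho$ is continuous on $[r,0)$, tends to $0$ as $z\to 0^-$, and extends continuously to $[r,0]$ with $\rho(0):=0$. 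Put $\beta_0:=\max_{[r,0]}\rho$: it is positive (as $\rho(z_0)>0$), and is attained at some $z_1\in[r,0)$ (since $\rho(0)=0<\beta_0$).

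The three regimes then follow from the identity $\sup_{\mu\in\Prob(T)}\NLP^{\beta\En_1}(T,\mu)=\max_{z\in[r,0]}G_\beta(z)$, where $G_\beta(z):=\hf(z)+\beta F_1(z)$ --- obtained by grouping the supremum by the value $z=\mu(\pot)$ and using compactness of $\mathcal{M}(z)$ with $h$ upper semicontinuous --- together with the fact that a measure is an equilibrium measure precisely when $\mu(\pot)\in\arg\max G_\beta$ and $h(T,\mu)=\hf(\mu(\pot))$. Since $G_\beta(z)=(-F_1(z))(\rho(z)-\beta)$ with $-F_1(z)>0$ for $z\in[r,0)$, while $G_\beta(0)=0$: for $\beta>\beta_0$ one gets $G_\beta<0=G_\beta(0)$ on $[r,0)$, so $\arg\max G_\beta=\{0\}$ and $\max G_\beta=0$; for $\beta=\beta_0$, $G_{\beta_0}\le 0$ with equality at $0$ and at $z_1$; for $\beta<\beta_0$, $G_\beta(z_1)=(-F_1(z_1))(\beta_0-\beta)>0=G_\beta(0)$, so $0\notin\arg\max G_\beta\subset[r,0)$. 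Translating to measures (equilibria exist by upper semicontinuity on the compact $\Prob(T)$): for $\beta<\beta_0$ every equilibrium has $\mu(\pot)<0$, hence is not supported on $K$; for $\beta=\beta_0$ the contact at $0$ makes every invariant measure on $K$ (all of entropy $0=\hf(0)$) an equilibrium, while the contact at $z_1<0$ gives an equilibrium not supported on $K$; for $\beta>\beta_0$ the equilibria are exactly $\{\mu\in\Prob(T):\mu(\pot)=0\}$, which, as $\pot\le 0$ and $\pot^{-1}(0)=K$, is the set of $T$-invariant measures carried by $K$. Finally, for $\beta>\beta_0$, Proposition~\ref{prop-lower-bound} gives $\NLP^{\beta\En_1}_\top(T)\le\sup_{\Prob(T)}\NLP^{\beta\En_1}(T,\cdot)=0$, and Proposition~\ref{prop:upper-ergodic} applied to any ergodic measure $\mu'$ carried by $K$ gives $\NLP^{\beta\En_1}_\top(T)\ge\NLP^{\beta\En_1}(T,\mu')=h(T,\mu')=0$; hence the topological pressure is $\equiv 0$, and in particular affine, on $(\beta_0,\infty)$.

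The point that needs care is the choice of $F_1$: it must work regardless of the behaviour of $\hf$ near $z=0$, in particular when $(T,\pot)$ already has a linear freezing transition, so that $\hf$ has infinite one-sided slope at $0$. The square root does this automatically, since $\rho=\hf/(-F_1)\le\sqrt{\hf}\to 0$ with no information on the modulus of continuity of $\hf$; and the affine summand $z$, which makes $-F_1$ \emph{strictly} positive on $[r,0)$, is exactly what collapses $\arg\max G_\beta$ to the single point $\{0\}$ for $\beta>\beta_0$, ruling out spurious maximizers at any point (such as $z=r$) where $\hf$ might vanish. Everything else is soft; in particular no ``abundance of ergodic measures'' is needed, since all maximizations are over the compact $\Prob(T)$ with an upper semicontinuous integrand and the value of the \emph{topological} pressure in the frozen regime is pinned down by the two unconditional bounds $\sup_{\Proberg(T)}\NLP\le\NLP_\top\le\sup_{\Prob(T)}\NLP$ (Propositions~\ref{prop:upper-ergodic} and~\ref{prop-lower-bound}) rather than by the full variational principle of Theorem~\ref{thm-var-prin-NL}.
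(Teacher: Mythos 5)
Your proof is correct and rests on the same mechanism as the paper's: reduce to maximizing $\hf(z)+\beta F_1(z)$ over the rotation set $[r,0]$ (grouping measures by $z=\mu(\pot)$ and then maximizing entropy in $\mathcal M(z)$), and choose $F_1$ so that $\hf(z)=o(-F_1(z))$ as $z\to0^-$ — your $-F_1=\sqrt{\hf}+|z|$ does exactly this, since $\hf/(-F_1)\le\sqrt{\hf}\to0$ — so that the graph of $-\beta F_1$ detaches from the entropy--potential diagram at $z=0$ past a critical $\beta_0$. Two implementation differences are worth recording. First, the paper takes ``any increasing convex continuous $F_1$ with $\hf=o(-F_1)$'' without exhibiting one; your explicit choice settles existence, and the affine summand (which makes $-F_1$ strictly positive on $[r,0)$) plays the role that monotonicity of $F_1$ plays in the paper, namely forcing the maximizer set to collapse to $\{0\}$ for $\beta>\beta_0$. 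Second, for the topological-pressure claim the paper appeals to Theorem \ref{thm-var-prin-NL}, whose hypothesis of abundance of ergodic measures is satisfied there only because the chosen $F_1$ is convex (so the energy is convex) — a point left implicit; you instead pin $\NLP^{\beta\En_1}_\top(T)=0$ for $\beta>\beta_0$ using only the unconditional bounds of Propositions \ref{prop:upper-ergodic} and \ref{prop-lower-bound} together with an ergodic measure carried by $K$, which is more economical and frees you from any convexity requirement on $F_1$ (which your extension by a constant below $r$ would not obviously preserve). One small quibble: if $K=\varnothing$ the third bullet cannot hold (equilibria always exist), so the theorem must be read with $K\neq\varnothing$; calling that case ``vacuous'' is not quite accurate, though the paper's own proof makes the same implicit assumption.
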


Observe that here $F_1$ will only be continuous at $0$; we can extend it continuously to $\mathbb{R}$, but we cannot make $F_1$ differentiable in a neighborhood of $0$.

\begin{proof}
Take for $F_1$ any increasing convex continuous function  $(-\infty,0]\to(-\infty,0]$ such that $\hf(z) = o(-F_1(z))$ as $z\to 0$. Theorem \ref{thm-var-prin-NL} ensures that equilibrium measures are found by optimizing $\hf(z)+\beta F_1(z)$ and then maximizing entropy in $\mathcal{M}(z)$, as in Section \ref{sec:convexity}
(we did not assume Legendre regularity, but we assumed enough to ensure that each optimal $z$ comes with at least one equilibrium measure).

Since $\hf$ is bounded by $h_\top(T)$, for $\beta$ large enough the graph of $-\beta F_1$ is above the graph of $\hf$ except at $0$ where they meet. 
This means that for these $\be$s, $\hf(z)+\be F_1(z)$ is non positive and always negative for $z<0$, i.e., the unique optimal $z$ is $0$.

Let $\beta_0$ the least $\beta$ such that $\hf(z)\le -\beta F_1(z)$ for all $z$. Since $\hf(z) = o(-\beta_0 F_1(z))$ as $z\to 0$, there must be a touching point distinct from $0$, and we get two optimal values $z=0$ and $z=z_0<0$, and at least two equilibrium measures. For $\beta<\beta_0$, $z=0$ cannot be optimal anymore since $z_0$ is strictly better. The conclusion follows.
\end{proof}

A simple example can be worked out in the case of the shift over $X=\{a,b\}^{\mathbb{N}}$ and the potential $\pot$ taking the values $0$ on the cylinder $[a]$ and $-1$ on the cylinder $[b]$. We have $\hf(z) \sim z\log(-z)$ at zero, so that we can take $F_1(z) = -(-z)^\alpha$ with any $\alpha\in(0,1)$: the nonlinear thermodynamical formalism associated with the energy
 $$
    \mu\mapsto -\lvert \mu(\pot)\rvert^\alpha
 $$
exhibits a strong freezing phase transition with ground state $\mu_0=\delta_{aaa\dots}$.

\bigbreak

\end{document}